\newcommand{\norm}[1]{\left\|{#1}\right\|}
\newcommand{\abs}[1]{\left\lvert{#1}\right\rvert} 
\newcommand{\ang}[1]{\left\langle{#1}\right\rangle}
\newcommand{\N}{\mathbb N}
\newcommand{\R}{\mathbb R}
\newcommand{\Z}{\mathbb Z}
\newcommand{\T}{\mathbb T}
\newcommand{\ds}{\displaystyle}
\newcommand{\FF}{\mathcal F}
\newcommand{\n}{\nabla}
\newcommand{\p}{\partial}
\newcommand{\al}{\alpha}
\newcommand{\be}{\beta}
\newcommand{\ga}{\gamma}
\newcommand{\de}{\delta}
\newcommand{\ep}{\varepsilon}
\newcommand{\io}{\iota}
\newcommand{\ka}{\kappa}
\newcommand{\ti}{\widetilde}
\newtheorem{thm}{Theorem}[section]
\newtheorem{lem}[thm]{Lemma}
\newtheorem{prop}[thm]{Proposition}
\theoremstyle{remark}
\newtheorem{rem}[thm]{Remark}
\theoremstyle{definition}
\numberwithin{equation}{section}
\begin{document}

\title{The Cauchy problem for wave maps on a curved background}
\author{Andrew  Lawrie}
\address{Department of  Mathematics, The University of Chicago\\ Chicago, IL 60637, U.S.A.} 
\email{alawrie@math.uchicago.edu} 
\date{\today}
\thanks{This work is part of the author's Ph.~ D.~ thesis at the University of Chicago}

\begin{abstract}
We consider the Cauchy problem for wave maps $u: \R\times M \to N$, for Riemannian manifolds $(M, g)$ and $(N,h)$. We prove global existence and uniqueness for initial data, $u[0]=(u_0, u_1)$, that is small in the critical norm $\dot{H}^{\frac{d}{2}}\times \dot{H}^{\frac{d}{2}-1}(M; TN)$, in the case $(M,g)$ = $(\R^4, g)$, where $g$ is a small perturbation of the Euclidean metric.   The proof follows the method introduced by Statah and Struwe in ~\cite{Shat-Stru WM}  for proving global existence and uniqueness of small data wave maps $u : \R\times \R^d \to N$ in the critical norm, for $d\ge 4$. In our  argument we employ the Strichartz estimates for variable coefficient wave equations established by Metcalfe and Tataru in ~\cite{Met-Tat}. 
\end{abstract}

\maketitle

\newcounter{parts}

\vspace{\baselineskip}

\section{Introduction}

\vspace{\baselineskip}

\subsection{Wave Maps}\label{Wave Maps}  Wave maps are the hyperbolic analogs of harmonic maps. They are defined as follows. Let $(M,g)$ be a Riemannian manifold of dimension $d$. Denote by $(\ti{M}, \eta)$ the Lorentzian manifold $\ti{M} = \R\times M$. The metric $\eta$ is represented in local coordinates by $\eta = (\eta_{\al \be}) = \textrm{diag} (-1,g_{ij})$. Let $(N,h)$ be a complete Riemannian manifold without boundary of dimension $n$.
\subsubsection{Intrinsic Definition}\label{Intrinsic Definition}
  A map $u: (\ti{M},\eta) \longrightarrow (N,h)$ is called a wave map if it is, formally,  a critical point of the functional 

\begin{align*}
\mathcal{L}(u) = \frac{1}{2} \int_{\ti{M}} \ang{du, du}_{T^*\ti{M} \otimes u^*TN} \, \textrm{dvol}_{\eta}
\end{align*}
\\
Here we view the differential, $du$, of the map $u$ as a section of the vector bundle $(T^*\ti{M} \otimes u^*TN, \eta\otimes u^*h)$, where $u^*TN $ is the pullback of $TN$ by $u$ and $u^*h$ is the pullback metric. In local coordinates this becomes

\begin{align*}
\mathcal{L}(u) 
=\frac{1}{2} \int_{\ti{M}} \eta^{\al\be}(z) h_{ij}(u(z))\p_{\al}u^i(z) \p_{\be} u^j(z) \, \sqrt{\abs{\eta}} \,dz\\
\end{align*}
One can show that the Euler-Lagrange equations for $\mathcal{L}$ are given by

\begin{align}\label{wm}
\frac{1}{\sqrt{\abs{\eta}}}D_{\al}\left( \sqrt{\abs{\eta}} \eta^{\al \be} \p_{\be}u\right) =0
\end{align}
\\
where $D$ is the pull-back covariant derivative on $u^*TN$. In local coordinates on $N$, writing $u=(u^1, \dots, u^n)$,  we can rewrite ~\eqref{wm} as 

\begin{align}\label{wm local}
\Box_{\eta} u^{k} = -\eta^{\al \be} \Gamma_{ij}^{k}(u) \p_{\al} u^i \p_{\be} u^j
\end{align}
where $\Box_{\eta}u = -\p_{tt}u + \Delta_gu$  and $\ds{\Delta_gu= \frac{1}{\sqrt{\abs{g}}} \p_{\al}( \sqrt{\abs{g}}g^{\al \be} \p_{\be}u)} $ is the Laplace-Beltami operator on $M$.   $\Gamma_{ij}^k= \frac{1}{2} h^{k\ell}( \p_i h_{\ell j} + \p_j h_{i\ell} - \p_{\ell} h_{ij})$ are the Christoffel symbols associated to the metric connection on ~$N$.   

\subsubsection{Extrinsic Definition}\label{Extrinsic Definition} Wave maps can also be defined extrinsically. This approach is equivalent to the intrinsic approach, see for example ~\cite[Chapter 1]{Sha-Stru GWE}.  By the Nash-Moser embedding theorem there exists $m\in\N$  large enough so that we can isometrically embed $(N,h)\hookrightarrow (\R^m, \ang{\cdot, \cdot})$, where  $\ang{\cdot, \cdot}$ is the Euclidean scalar product. We can thus consider maps $u: (\ti{M}, \eta) \to (\R^m, \ang{\cdot, \cdot})$ such that $u(t,x) \in N$ for every $(t,x)\in \ti{M}$.  Wave maps can then be defined formally as critical points of the functional 

\begin{align*}
\mathcal{L}(u) =\frac{1}{2} \int_{\ti{M}} \eta^{\al \be} \ang{\p_{\al}u, \p_{\be}u} \, \sqrt{\abs{\eta}} \, dz
\end{align*}
One can show that $u$ is a wave map if and only if  $u$ satisfies 

\begin{align}\label{wm extrinsic}
\Box_\eta u \perp T_{u}N  
\end{align}
From this we can deduce that $u$ satisfies 

\begin{align} \label{wm extrinsic local}
\Box_{\eta} u = -\eta^{\al \be}  S(u)(\p_{\al}u, \p_{\be}u)
\end{align}
where $S$ is the second fundamental form of the embedding $N\hookrightarrow \R^m$.  One can formally establish energy conservation from the  extrinsic definition ~\eqref{wm extrinsic}. Define the energy

\begin{align}\label{energy}
E(u(t)) := \frac{1}{2}\int_M \left( \abs{\p_t u}^2 + \abs{d_M u}^2\right) \sqrt{\abs{g}} \, dx
\end{align}
where by $d_Mu $ we mean the differential of the map $u(t) : M\to \R^m$. Observe that $\ds{\Box_{\eta}u \perp T_{u}N}$ implies that $\ang{\Box_{\eta}u, \p_t u} =0$. Hence we have 
 
\begin{align*}
0 &= -\int_{M} \ang{\Box_{\eta}u, \p_t u}_{u(x)} \sqrt{\abs{g}} \, dx\\
 &= \int_{M} \ang{\p_t\p_tu, \p_t u}_{u(x)}\sqrt{\abs{g}}\,dx -\int_M \ang{ \p_{\al}( \sqrt{\abs{g}}g^{\al \be} \p_{\be}u), \p_t u}_{u(x)}  \, dx\\
&= \frac{1}{2}\int_M \frac{d}{dt} \abs{\p_t u}^2 \sqrt{\abs{g}} \, dx +  \int_M \ang{g^{\al \be} \p_{\be} u, \p_{\al} \p_{t} u }_{u(x)}  \sqrt{\abs{g}} \, dx\\
&= \frac{d}{dt}\left(\frac{1}{2}\int_M  \left(\abs{\p_t u}^2 + \abs{d_M u}^2\right) \sqrt{\abs{g}} \, dx\right)
\end{align*}
Integrating in time then gives $\ds{E(u(t)) = E(u(0))}$ for any time $t$.

\vspace{\baselineskip}

\subsection{History and Overview}\label{History and Overview} The wave maps equation has been studied extensively over the past several decades in the case of a flat background manifold, $(M,g) = (\R^d, \ang{ \cdot, \cdot})$. In this case, $\eta$ is the Minkowksi metric on $\R^{1+d}$ and the intrinsic formulation ~\eqref{wm} becomes 

\begin{align}\label{flat wm}
D_{\al} \p^{\al} u = 0
\end{align}
The extrinsic formulation ~\eqref{wm extrinsic}  is given by 

\begin{align}\label{flat ex wm}
\Box u \perp T_uN
\end{align}
In this setup, wave maps are invariant under the scaling~ $u(t,x) \mapsto u_{\lambda}(t,x) = u(\lambda t, \lambda x)$.  This scaling is critical relative to $\dot{H}^{\frac{d}{2}}\times \dot{H}^{\frac{d}{2}-1}(\R^d)$ whereas the conserved energy, $E(u)$, is critical relative to $\dot{H}^1\times L^2(\R^d)$. Hence the Cauchy problem for ~\eqref{flat ex wm} is critical in $\dot{H}^{\frac{d}{2}}\times \dot{H}^{\frac{d}{2}-1}(\R^d)$ and energy critical when $d=2$. 

We review some of the major developments in the theory of wave maps. In the energy super-critical case, $d\ge 3$, Shatah in ~\cite{Sha 1} showed that self-similar blow-up can occur for solutions of finite energy. In the energy critical case, $d=2$, there is no self similar blow-up as demonstrated by Shatah and Struwe in ~\cite{Sha-Stru GWE}.  In the equivariant setting, Struwe proved in  ~\cite{Stru 1} that if blow-up does occur then the solution must converge, after rescaling, to a non-constant, co-rotational harmonic map. Recently, Krieger, Schlag, and Tataru in ~\cite{Kri-Sch-Tat} and Rodnianski-Sterbenz in ~\cite{Rod-Ster} have constructed finite energy wave maps $u: \R^{1+2} \to S^2$ that blow up in finite time.

The well-posedness theory for energy critical wave maps in the equivariant setting was developed by Christodoulou and Tahvildar-Zadeh in  ~\cite{Chr-Tah 1}, ~\cite{Chr-Tah 2}, and by Shatah and  Tahvildar-Zadeh in ~\cite{Sha-Tah 1} ~\cite{Sha-Tah 2}.  In the non-equivariant case, Klainerman and Machedon in  ~\cite{Kla-Mac 1}, ~\cite{Kla-Mac 2}, ~\cite{Kla-Mac 3}, ~\cite{Kla-Mac 4}, and Klainerman and Selberg in ~\cite{Kla-Sel 1}, ~\cite{Kla-Sel 2},  established strong well-posedness in the subcritical norm $H^s\times H^{s-1}(\R^d)$ with $s> \frac{d}{2}$ by exploiting the null-form structure present in ~\eqref{flat ex wm}. 

The first major breakthrough in the critical theory, $s=\frac{d}{2}$, was accomplished by Tataru in ~\cite{Tat 1}, ~\cite{Tat 2}, where he proved   global well-posedness for smooth data that is small in the Besov space $\dot{B}^{\frac{d}{2}}_{2,1}\times \dot{B}^{\frac{d}{2}-1}_{2,1}(\R^d)$ for $d\ge 2$. Then, in the groundbreaking work ~\cite{Tao 1}, ~\cite{Tao 2}, Tao proved  global well-posedness  for wave maps $u:\R^{1+d} \to S^k$ for smooth data that is small in the critical Sobolev norm $\dot{H}^{\frac{d}{2}}\times \dot{H}^{\frac{d}{2}-1}(\R^d)$ for $d\ge 2$. Later, this result was extended to more general targets by Klainerman and Rodnianski in ~\cite{Kla-Rod}, by Krieger in ~\cite{Kri 1}, ~\cite{Kri 2}, ~\cite{Kri 3}, by Nahmod, Stefanov and Uhlenbeck in ~\cite{Nah-Ste-Uhl}, by Shatah and Struwe in ~\cite{Shat-Stru WM}, and by Tataru in ~\cite{Tat  3}, ~\cite{Tat 4}. 

Finally, the large data, energy critical case has been undertaken in major works by Krieger and Schlag in ~\cite{Kri-Sch},  Sterbenz and Tataru in ~\cite{Ste-Tat 1}, ~\cite{Ste-Tat 2}, and Tao in ~\cite{Tao 3}--\cite{Tao 7}. 

\vspace{\baselineskip}
The work of Shatah and Struwe in ~\cite{Shat-Stru WM} constituted a significant simplification of Tao's argument in dimensions $d\ge 4$, and it is on the methods utilized in ~\cite{Shat-Stru WM}, that this present work is based. In ~\cite{Shat-Stru WM}, Shatah and Struwe consider the Cauchy problem for wave maps $u:\R^{1+d} \to N$ with initial data $(u_0, u_1)\in H^{\frac{d}{2}} \times H^{\frac{d}{2}-1} (\R^d, TN)$ that is small in the critical norm $\dot{H}^{\frac{d}{2}} \times \dot{H}^{\frac{d}{2}-1}(\R^d, TN)$ for $d\ge 4$. The target manifold $N$ is assumed to have bounded geometry. Their main result is a proof of the existence of a unique global solution, $(u, \dot{u}) \in C^0( \R; H^{\frac{d}{2}}) \times C^0(\R; H^{\frac{d}{2}-1})$, to the aforementioned Cauchy problem. Existence is deduced by way of the following global a priori estimates for the differential, $du$, of the wave map:

\begin{align*}
\|du\|_{L^{\infty}_t \dot{H}^{\frac{d}{2}-1}_x} + \|du\|_{L^2_tL^{2d}_x} \lesssim \|u_0\|_{\dot{H}^{\frac{d}{2}}} + \|u_1\|_{\dot{H}^{\frac{d}{2}-1}}
\end{align*} 

In order to prove  the above estimates, the Coulomb frame is introduced as this allows one to derive a system of wave equations for $du$ that is amenable to a Lorentz space version of the endpoint Strichartz estimates proved in ~\cite{Kee-Tao}. The connection form, $A$, associated to the Coulomb frame on the vector bundle $u^*TN$ appears in the nonlinearity  of the wave equation for $du$, and estimates to control its size are crucial to the argument. The Coulomb gauge condition implies that $A$ satisfies a certain elliptic equation, and it is this structure that enables the proof, for example, of the essential $L^1_t L^{\infty}_x$ estimates for $A$, see ~\cite[Proposition $4.1$]{Shat-Stru WM}.  

\vspace{\baselineskip}

In this paper, we consider the Cauchy problem for wave maps $u: \R\times M \to N$, where the background manifold $(M,g)$ is no longer Euclidean space. We follow the same basic argument as in ~\cite{Shat-Stru WM} and derive a wave equation for the $u^*TN$-valued $1$-form, $du$, using the Coulomb gauge as our choice of frame on $u^*TN$. As the geometry of $(M,g)$ is no longer trivial, the resulting equation for $du$ is, in its most natural setting, an equation of $1$-forms. In coordinates on $M$, we can rewrite the equation for $du$ in components, obtaining a system of variable coefficient nonlinear wave equations. This is the content of Section ~\ref{wave equation for du}. 

The main technical ingredients in ~\cite{Shat-Stru WM} are the estimates for the connection form $A$,  and the endpoint Strichartz estimates for the wave equation used to control the $L^{\infty}_t\dot{H}_x^{\frac{d}{2}-1}\cap L^2_tL^{2d}_x$ norm of $du$. In order to proceed as in ~\cite{Shat-Stru WM}, but now in the setting of a curved background manifold, we will need replacements for each of these items.  

In what follows, we restrict our attention to the case that the background manifold $(M,g)$ is $(\R^4, g)$, with $g$  a small perturbation of the Euclidean metric, as in this case we have suitable replacements for the technical tools used in ~\cite{Shat-Stru WM}. Here we view the equations for the components of connection form, $A$,  as a system of variable coefficient elliptic equations and prove elliptic estimates via a perturbative argument, see ~Proposition ~\ref{main elliptic estimates}.  We employ  several tools from the theory of Lorentz spaces to prove the crucial $L^1_t L^{\infty}_x$ estimates for $A$. 

In order to have suitable Strichartz estimates, we tailor our assumptions on the metric $g$ so that the variable coefficient wave equations for $du$ are of the type studied by Metcalfe and Tataru in ~\cite{Met-Tat}. We deduce a Lorentz refinement to the Strichartz estimates in ~\cite{Met-Tat}, see Section ~\ref{Strichartz Estimates} below, which we use to prove global a priori estimates for $du$ in Section ~\ref{a priori estimates}.

 The global-in-time Strichartz estimates for variable coefficient wave equations in ~\cite{Met-Tat} that we use in the proof of the a priori estimates for $du$ have emerged from Tataru's method of  using phase space transforms and microlocal analysis to prove dispersive estimates for variable coefficient dispersive equations. In the case of the variable coefficient wave equation, the Bargmann transform is used to construct a parametrix that satisfies suitable dispersive estimates. Localized energy estimates are then used to control error terms when proving estimates for the variable coefficient operator.  We refer the reader to ~\cite{Tat 5}--\cite{Tat 10} and of course to ~\cite{Met-Tat}, for more details and history.  A very brief summary is included in Section ~\ref{Strichartz Estimates}. 

Our main theorem is a global existence and uniqueness result for the Cauchy problem for wave maps in this setting, with data $(u_0, u_1)$ that is small in the critical norm $\dot{H}^{\frac{d}{2}} \times \dot{H}^{\frac{d}{2}-1}(M , TN)$. The precise statement of the result is Theorem ~\ref{main thm} below.

\vspace{\baselineskip}
\subsection{Acknowledgements}\label{Acknowledgements} I would like to thank my advisor, Professor Wilhelm Schlag, for introducing me to the topic of wave maps and for his generous help, encouragement, and guidance related to this work.

\vspace{\baselineskip}
 
 \subsection{Notation}\label{Notation} In what follows we will adopt the convention that $f \lesssim g$ means that there exists a constant $C>0$ such that $f \le C g$.  Similarly, $f \simeq g$ will mean that there exist constants $c, C >0$ such that $c g \le f \le Cg$.

\vspace{\baselineskip}

\subsection{Geometric Framework}\label{geometric framework}

We set $(M,g) = (\R^4, g)$ with $g$ a small perturbation of the Euclidean metric on $\R^4$, satisfying the following assumptions: Let $\ep>0$ be a small constant, to be specified later. We will require 

\begin{align}
\|g -g_0\|_{L^{\infty}} &\le \ep \label{g}\\
\| \p g \|_{L^{4,1}(\R^4)} &\lesssim \ep \label{p g} \\
\| \p^2 g\|_{L^{2,1}(\R^4)} &\lesssim \ep \label{p^2 g}\\
\| \p^k g\|_{L^2(\R^4)} &< \infty \quad \mathrm{for}\, \, k \ge 3 \label{p^k g}
\end{align}
where $g_0= \mathrm{diag}(1, 1, 1, 1)$ is the Euclidean metric on $\R^4$ and $L^{p,q}(\R^4)$ denotes the Lorentz space. Assumptions ~\eqref{g}--\eqref{p^2 g} are needed in order to prove the elliptic estimates for the connection form, $A$, associated to the Coulomb frame in Section ~\ref{connection form estimates}. Note that these assumptions are consistent with, and are, in fact, stronger than the weak asymptotic flatness conditions specified in Metcalfe--Tataru in ~\cite{Met-Tat}, namely 

\begin{align}\label{scale inv metric}
\sum_{j\in \Z} \sup_{\abs{x} \simeq 2^j} \abs{x}^2 \abs{\p^2 g(x) } + \abs{x} \abs{\p g(x)} + \abs{ g(x) - g_0} \le \ep
\end{align}
This will justify our application in Section ~\ref{a priori estimates} of the Strichartz estimates for variable coefficient wave equations deduced in ~\cite{Met-Tat}. 

The assumptions in \eqref{p^k g} are needed in order to establish the high regularity local theory for wave maps. This theory will be used in the existence argument in Section \ref{Existence}.

We will also record a few comments regarding the assumptions on the target manifold $(N,h)$. We will assume that $(N,h)$ is a smooth complete Riemannian manifold, without boundary that is isometrically embedded into $\R^m$. Following ~\cite{Shat-Stru WM}, we also assume that $N$ has bounded geometry in the sense that the curvature tensor, $R$, and the second fundamental form, $S$, of the embedding are bounded and all of their derivatives are bounded.  

In the argument that follows, we will assume that either  $N$ admits a parallelizable structure or that $N$ is compact,  as we will require a global orthonormal frame for $TN$ in our argument. Such a frame does not, of course, exist for a general compact manifold. However if $N$ is compact, by an argument in ~\cite{Hel}, we can avoid this inconvenience by constructing a certain isometric embedding $J: N \hookrightarrow \ti{N}$ where $\ti{N}$ is diffeomorphic to the flat torus $\T^m$ and admits an orthonormal frame. This embedding $J$ is constructed so that $u$ is a wave map if and only if the composition $J\circ u$ is a wave map, see ~\cite[Lemma $4.1.2$]{Hel}. This allows us to work with $J\circ u: \ti{M} \to \ti{N}$ instead of with $u$. Hence we can assume without loss of generality that the target manifold $N$ admits a global orthonormal frame $\ti{e} = (\ti{e}_1, \dots, \ti{e}_n)$ for the tangent space $TN$.

\subsection{Main Result}\label{main result}

In this paper, we study the Cauchy problem for wave maps. The initial data, $(u, \dot{u})\vert_{t=0}=(u_0, u_1)$, can either be viewed intrinsically or extrinsically.  In the extrinsic formulation, we will consider initial data 
\begin{align}\label{extrinsic data}
(u_0, u_1) \in (M ,g) \to TN
\end{align}
by which we mean $u_0(x) \in N \hookrightarrow \R^m$ and $u_1(x) \in T_{u_0(x)}N \hookrightarrow \R^m$ for almost every $x \in M$.  And we say that $(u_0, u_1) \in H^s_e\times H^{s-1}_e(M; TN)$ if $u_0 \in H^s(M; \R^m)$ and $u_1\in H^{s-1}(M; \R^m)$. The homogeneous spaces $ \dot{H}^s_e\times \dot{H}^{s-1}_e(M; TN)$ are defined similarly. For the definition of the spaces $H^s(M; \R^m)$ we refer the reader to Section ~\ref{sobolev spaces},   or to ~\cite{Heb}. 

To view the data intrinsically, we will put to use the parallelizable structure on $TN$. Let our initial data be given by $(u_0, u_1)$ where $u_0: M\to N$  and $u_1 :M \to u_0^*TN$ with $u_1(x) \in T_{u_0(x)}N$.  
Observe that $u_0^* TN$ inherits a parallelizable structure from $TN$, see Section ~\ref{Coulomb Frame},   and let $e=(e_1, \dots, e_n)$ be an orthonormal frame for $u^*TN$.  Since $du_0: TM \to u^*TN$ we can find a $u^*TN$-valued $1$-form $q_0=q_0^a e_a$ such that $du_0= q^a_0 e_a$. Similarly we can find $q_1^a: M\to \R$ such that $u_1 = q^a_1 e_a$. We then say that $(u_0, u_1) \in H^s_i \times H^{s-1}_i(M; TN)$  if $q_0^a \in H^{s-1}(TM; \R)$ and $q_1^a \in H^{s-1}(M; \R)$ for each $1\le a\le n$. These norms are further discussed in Section ~\ref{sobolev spaces}. Again, the homogeneous versions $ \dot{H}^s_i\times \dot{H}^{s-1}_i(M; TN)$ are defined similarly. 

In Section ~\ref{Equivalence of Norms}, we show that if we choose the frame $e$ to be the Coulomb frame, see Section ~\ref{Coulomb Frame}, then the extrinsic and intrinsic approaches to defining the homogeneous Sobolev norms of our data $(u_0, u_1)$ are equivalent. This will allow us to use both definitions interchangeably in the arguments that follow.

 Also in the appendix, Section ~\ref{sobolev spaces},  we show that the ``covariant" Sobolev spaces $\dot{H}^s(M; N)$, with $(M,g)= (\R^4, g)$ with the metric $g$ as in ~\eqref{g}--\eqref{p^k g} are equivalent to the ``flat'' spaces $\dot{H}^s((\R^4,g_0); N)$ with the Euclidean metric $g_0$ on $\R^4$. Hence in what follows we can, when convenient,  ignore the non-Euclidean metric $g$ for the purpose of estimating Sobolev norms, replacing covariant derivatives on $M$ with partial derivatives and the volume form $\textrm{dvol}_g$ with the Euclidean volume form. 

We can now state the main theorem.

\begin{thm}\label{main thm}
Let $(N, h)$ be a smooth, complete, $n$-dimensional Riemannian manifold without boundary and with bounded geometry. Let $(M,g) = (\R^4, g)$ with $g$ as in ~\eqref{g}--\eqref{p^k g} and let $(\ti{M}, \eta) = (\R\times M, \eta)$ with $\eta=\mathrm{diag}(-1, g)$. Then there exists an $\ep_0 >0$ such that for every $(u_0, u_1) \in H^2 \times H^1((M,g); TN)$ such that 

\begin{align}\label{small energy}
\|u_0\|_{\dot{H}^2} + \|u_1\|_{\dot{H}^1} < \ep_0
\end{align}
there exists a unique global wave map, $u: (\ti{M}, \eta) \rightarrow (N,h)$,  with initial data $(u, \dot{u})\vert_{t=0}=(u_0, u_1)$,   such that $(u, \dot{u}) \in  C^0(\R; H^2(M;N)) \times C^0(\R; H^1(M; TN))$. Moreover, $u$ satisfies the global estimates

\begin{align}\label{global estimates}
\|du\|_{L^{\infty}_t \dot{H}^1_x} + \|du\|_{L^2_t L^8_x} \lesssim \ep_0.
\end{align}
In addition,  any higher regularity of the data is preserved.
\end{thm}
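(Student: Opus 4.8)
The plan is to follow the Shatah--Struwe scheme adapted to the curved background, so the proof of Theorem \ref{main thm} splits into an a priori estimate, a continuation/existence argument, and a uniqueness argument. First I would pass to the extrinsic formulation \eqref{wm extrinsic local} and, using the parallelizable structure on $u^*TN$ together with the Coulomb gauge constructed in Section \ref{Coulomb Frame}, derive the equation for the $u^*TN$-valued $1$-form $du$ written in components in coordinates on $M$. Writing $q^a_\alpha$ for the components of $du$ in the Coulomb frame, this gives a system of variable coefficient nonlinear wave equations $\Box_g q = \mathcal{N}(A, q)$ whose nonlinearity is built from the connection form $A$ of the Coulomb frame and from $q$ itself, with the curvature and second fundamental form of $N$ entering as bounded coefficients. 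The key point is that the Coulomb gauge condition turns the equations for the components of $A$ into a variable coefficient elliptic system, to which Proposition \ref{main elliptic estimates} applies.

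Next I would set up a bootstrap. Define $Q(T) := \|du\|_{L^\infty_t \dot H^1_x([0,T])} + \|du\|_{L^2_t L^8_x([0,T])}$ on a maximal interval of high-regularity existence $[0,T^*)$ supplied by the local theory built on assumption \eqref{p^k g}. Assume $Q(T) \le C\ep_0$ for a suitable constant $C$ and small $\ep_0$. The elliptic estimates of Proposition \ref{main elliptic estimates} bound $A$ --- crucially the $L^1_t L^\infty_x$ norm, via the Lorentz-space tools mentioned in Section \ref{connection form estimates} --- in terms of $Q(T)$, so that $\|A\|$ in the relevant norms is $O(\ep_0)$. Then I would apply the Lorentz refinement of the Metcalfe--Tataru Strichartz estimates (Section \ref{Strichartz Estimates}) to the wave equation for $q$, estimating the nonlinearity $\mathcal{N}(A,q)$ in the dual Strichartz norm: the $L^1_t L^\infty_x$ control on $A$ pairs with the $L^2_t L^8_x$ (and energy) control on $q$, and the terms quadratic and cubic in $q$ are handled by Strichartz and Sobolev embedding in dimension four. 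This yields $Q(T) \lesssim \ep_0 + \ep_0 Q(T)$, hence $Q(T) \le 2C'\ep_0$ uniformly in $T < T^*$, improving the bootstrap assumption and in particular proving \eqref{global estimates}.

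With the a priori bound in hand, I would upgrade it to global existence: the finite Strichartz norm prevents blow-up of the $H^2 \times H^1$ norm, since the local well-posedness theory gives a lifespan bounded below in terms of that norm, and the persistence of higher regularity follows by differentiating the equation and running the same estimates at higher order using \eqref{p^k g}. Continuity in time of $(u,\dot u)$ in $H^2 \times H^1$ comes from the Strichartz/energy estimates together with a standard limiting argument. Uniqueness I would prove at the level of the extrinsic equation \eqref{wm extrinsic local} by an energy estimate for the difference of two solutions, using that both solutions lie in $L^2_t L^8_x$ with $du$; the difference satisfies a linear wave equation with coefficients controlled by the solutions, and Gronwall closes the argument --- this requires that the equivalence of intrinsic and extrinsic norms (Section \ref{Equivalence of Norms}) and the equivalence of covariant and flat Sobolev spaces (Section \ref{sobolev spaces}) let us move freely between formulations.

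I expect the main obstacle to be the $L^1_t L^\infty_x$ estimate for the connection form $A$ in the variable coefficient setting: in \cite{Shat-Stru WM} this rests on the precise elliptic structure coming from the Coulomb gauge on flat space, and here one must show that the perturbative elliptic estimates of Proposition \ref{main elliptic estimates}, combined with Lorentz-space embeddings, survive the passage to a curved metric $g$ --- which is exactly why the strong assumptions \eqref{g}--\eqref{p^2 g} on $g$ in Lorentz norms are imposed. A secondary difficulty is checking that the nonlinearity of the $1$-form wave equation, once written in coordinates, genuinely falls under the Metcalfe--Tataru framework and that the Lorentz refinement of their Strichartz estimates holds with the endpoint $L^2_t L^8_x$ needed to close the bootstrap in $d=4$.
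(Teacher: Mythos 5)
Your overall strategy matches the paper's: Coulomb gauge, elliptic estimates for $A$ (in particular the $L^\infty_x$ bound via Lorentz spaces), the variable-coefficient wave equation for $q=du$ in components, the Lorentz/Besov refinement of the Metcalfe--Tataru Strichartz estimates, a bootstrap closing \eqref{global estimates}, higher-regularity persistence by covariant energy estimates, and uniqueness by an energy/Gronwall argument in the extrinsic formulation. Two remarks on the a priori step are cosmetic but worth noting: the paper estimates the nonlinearity $H$ in the dual norm $L^1_tL^2_x$ and the resulting bound is cubic, $Q(T)\lesssim \ep_0 + Q(T)^3$, rather than of the schematic linear form $Q(T)\lesssim \ep_0 + \ep_0 Q(T)$ you write; and the pointwise bound on $A$ enters as $\norm{A}_{L^\infty_x}\lesssim \norm{q}_{L^{8,2}_x}^2$ inside the spatial estimate, not as a separately-applied $L^1_tL^\infty_x$ bound paired against $L^\infty_tL^2_x$, although the arithmetic comes out the same.

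The one genuine gap is in how you pass from the a priori bound to existence. You invoke a blow-up criterion at the level of $H^2\times H^1$ --- ``the local well-posedness theory gives a lifespan bounded below in terms of that norm'' --- but the paper has no local theory at $H^2\times H^1$: the high-regularity local theory (Remark~\ref{high regularity}) requires $H^s\times H^{s-1}$ with $s>4$. What actually closes the argument is: approximate $(u_0,u_1)$ by smooth data $(u_0^k,u_1^k)$ satisfying \eqref{small energy}; run the high-regularity local theory for each $k$; use Proposition~\ref{global est} together with the higher-regularity persistence (Proposition~\ref{high reg}, which converts control of $\norm{du^k}_{L^2_tL^8_x}$ into control of, say, $H^5\times H^4$ at any finite time) to extend each smooth solution globally with uniform-in-$k$ Strichartz bounds; then take a weak limit and extract a further subsequence converging pointwise a.e.\ by Rellich. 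Two points you do not address and would need to: the density of $C^\infty\times C^\infty(M;TN)$ in $H^2\times H^1(M;TN)$ is not automatic for manifold-valued maps (the paper resolves this in Section~\ref{density} by working through the Coulomb frame and the intrinsic norms), and the constant in the a priori estimate must be uniform over the approximating sequence so that the limit inherits \eqref{global estimates}. Without the approximation-and-limit step, the existence claim for rough $H^2\times H^1$ data is not justified.
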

 
 We will use a bootstrap argument to prove the global estimates ~\eqref{global estimates}. In what follows we will make the assumption that there exists a time $T$ such that for a wave map $u$ with data $(u_0, u_1)$ as in ~\eqref{small energy}, the estimates in ~\eqref{global estimates}  hold on the interval $[0, T)$. That is, we have 
 
 \begin{align} \label{bootstrap hypothesis}
 \|du\|_{L^{\infty}_t([0,T); \dot{H}_x^1)} + \|du\|_{L^2_t([0,T); L^8_x)} \lesssim \ep_0
\end{align}
We will use this assumption to prove the global-in-time estimates ~\eqref{global estimates}.
 
 \begin{rem}\label{high regularity} The local well-posedness theory for the high regularity Cauchy problem for ~\eqref{wm extrinsic local} is standard. For example, with $(M,g) = (\R^4, g)$ for a smooth perturbation $g$ as in ~\eqref{g}--\eqref{p^k g},  if we have data $(u_0, u_1) \in H^{s} \times H^{s-1}(M; TN)$ for say, $s> 4= \frac{d}{2} +2$, then the Cauchy problem for ~\eqref{wm extrinsic local} is locally well-posed. This can be proved using $H^s$ energy estimates and a contraction argument. The proof relies on the fact that $H^s(\R^d)$ is an algebra for $s>\frac{d}{2}$, and can be found for example in ~\cite{Sha-Stru GWE}.  
 \end{rem}

 \begin{rem}  We have only addressed the case $d=4$ case here because this is the only dimension where we have applicable Strichartz estimates. In dimension $3$, the endpoint $L^2_tL^{\infty}_x$ estimate is forbidden. In higher dimensions, $d\ge 4$, the initial data is assumed to be small in $\dot{H}^{s}\times \dot{H}^{s-1}$ with $s=\frac{d}{2}$, but the estimates in ~\cite{Met-Tat} only apply when lower order terms are present if we have $s=2$ or $s=1$, see ~\cite[Corollary 5 and Theorem 6]{Met-Tat}. This leaves $d=4$ as the only option, as here $\frac{d}{2}=2$.  
 \end{rem}

\vspace{\baselineskip}

\section{Uniqueness}\label{uniqueness}
We use the extrinsic formulation ~\eqref{wm extrinsic local} of the wave maps system to prove uniqueness. The argument given for uniqueness in ~\cite{Shat-Stru WM} adapts perfectly to our case and we reproduce it below for completeness. 

Suppose that $(u,\dot{u})$ and $(v,\dot{v})$ are two solutions to ~\eqref{wm extrinsic local} of class $H^2 \times H^1 ((\R^4,g);TN)$ such that 

\begin{align}\label{same data}
(u, \dot{u})\vert_{t=0} =  (v, \dot{v})\vert_{t=0}
\end{align}

In addition, assume that 

\begin{align}\label{finite L^8}
\|du\|_{L^{2}_t L^8_x} < \infty , \quad \|dv\|_{L^{2}_t L^8_x} < \infty 
\end{align}

Set $w = u-v$. Then $w$ satisfies 

\begin{align*}
\Box_{\eta} w =  -\eta^{\al \be}[S(u)-S(v)](\p_{\al}u, \p_{\be} u) - \eta^{\al \be}S(v)(\p_{\al}u + \p_{\al} v, \p_{\be} w)
\end{align*}
By considering the pairing $\ang{ \Box_{\eta} w, \dot{w}}$ and integrating over $M$ we obtain

\begin{align*}
\frac{1}{2} \frac{d}{dt} \|dw\|_{L^{2}}^2 &= \int_{\R^4} \ang{ \eta^{\al \be}[S(u)-S(v)](\p_{\al}u, \p_{\be} u), \dot{w}} \sqrt{\abs{g}} \,dx \\
&\quad + \int_{\R^4} \ang{\eta^{\al \be}S(v)(\p_{\al}u + \p_{\al} v, \p_{\be} w), \dot{w}} \sqrt{\abs{g}} \, dx\\
\\
& = I(t) + II(t)
\end{align*} 
Using that $S$ and all of its derivatives are bounded we have 

\begin{align*}
\abs{I(t)} &\lesssim \int_{\R^4} \abs{du}^2 \abs{w} \abs{dw}\,dx\\
&\lesssim \|du\|_{L^8}^2\|w\|_{L^4} \|dw\|_{L^2} \\
&\lesssim \|du\|_{L^8}^2\|dw\|_{L^2}^2
\end{align*}
with the last inequality following from the Sobolev embedding $\dot{H}^1(\R^4) \hookrightarrow L^4(\R^4)$.

To estimate $II(t)$, we exploit the fact the $S(u)(\cdot, \cdot) \in (T_uN)^{\perp}$ which gives 
$\ang{ S(u)(\cdot, \cdot), u_t} = \ang{S(v)(\cdot, \cdot), v_t}=0$. This implies that we can rewrite

\begin{align*}
\abs{\ang{\eta^{\al \be}S(v)(\p_{\al}u + \p_{\al} v, \p_{\be} w), \dot{w}}} &= \abs{\ang{\eta^{\al \be}S(v)(\p_{\al}u + \p_{\al} v, \p_{\be} w), \dot{u}}} \\
\\
&=\abs{\ang{ \eta^{\al \be}[S(v)-S(u)](\p_{\al}u + \p_{\al} v, \p_{\be} w), \dot{u}}}\\
\\
&\le \abs{\ang{ \eta^{\al \be}[S(v)-S(u)](\p_{\al}u , \p_{\be} w), \dot{u}}} \\
&\quad+\abs{\ang{ \eta^{\al \be}[S(v)-S(u)]( \p_{\al} v, \p_{\be} w), \dot{u}}}\\
\\
&\lesssim (\abs{du}^2 +\abs{dv}^2) \abs{w} \abs{dw}
\end{align*} 
Hence we have 

\begin{align*}
\abs{II(t)} &\lesssim (\|du\|^2_{L^8} +\|dv\|_{L^8}^2)\|w\|_{L^4}\|dw\|_{L^2}\lesssim (\|du\|^2_{L^8} +\|dv\|_{L^8}^2)\|dw\|_{L^2}^2
\end{align*}
Putting this together we have 

\begin{align*}
\frac{1}{2} \frac{d}{dt} \|dw\|_{L^{2}}^2 &\lesssim (\|du\|_{L^8}^2 +\|dv\|_{L^8}^2)\|dw\|_{L^2}^2
\end{align*}
Integrating in $t$ and applying Gronwall's inequality gives us the uniform estimate

\begin{align*}
\|dw\|_{L^{\infty}_tL^{2}_x}^2 \le \|dw(0)\|_{L^2}^2 \cdot \textrm{exp} (C(\|du\|^2_{L^2_tL^8_x} +\|dv\|^2_{L^2_tL^8_x}))
\end{align*}
which implies uniqueness since $dw(0) =0$.

\vspace{\baselineskip}

\section{Coulomb Frame \& Elliptic Estimates}\label{Coulomb Frame}
We follow ~\cite{Shat-Stru WM} by exploiting the gauge invariance of the wave maps problem and rephrasing the wave maps equation in terms of the Coulomb frame. As discussed in Section ~\ref{geometric framework}, we can, without loss of generality, assume that $TN$ is parallelizable, and we choose a global orthonormal frame $\ti{e} = \{\ti{e_1},\dots,\ti{e_n}\}$. If $u: (\ti{M}, \eta) \to (N,h)$  is a smooth map, then we can pull back $\ti{e}$ to an orthonormal frame $\bar{e}= \ti{e}\circ u$ of $u^*TN$. Now, let $B: \R\times M\longrightarrow SO(n)$. With $B$ we can rotate this frame over each point $z\in \R\times M$ and obtain a new frame $e=(e_1, \dots, e_n)$,  with $e_a$ given by

\begin{align*}
e_a = B^b_a \bar{e}_b
\end{align*}
Observe that we can express the $u^*TN$-valued $1$-form $du$ in this new frame by finding $1$-forms $q^a=q^a_{\al} dx^{\al}$ where  $q^a_{\al} =u^*h(\p_{\al} u, e_a)$, and writing

\begin{align}\label{def of q}
du = q^a e_a
\end{align}
For this frame $e$ we have the associated connection form $A$. $A= (A^a_b)$ is a matrix of $1$-forms obtained in the following way. Given the frame $e$, we obtain for each $s\in\R$ a map 

\begin{align*} 
De_a: \Gamma(T(\{s\}\times & M))\longrightarrow \Gamma(u^*TN) \\
&X\longmapsto D_Xe_a
\end{align*}
 where $D$ is the pull back connection on $u^*TN$ and where for a vector bundle $E\rightarrow M$, $\Gamma(E)$ denotes the space of smooth sections. Equivalently, we can view $De_a$ as a section of $T^*M \otimes u^*TN$. We can express this map in terms of the connection form $A$ which can be viewed as the matrix of $1$-forms so that 
 
 \begin{gather*}
 De_a = A^b_a \otimes e_b\\
 De_a(X) = D_Xe_a= A^b_a(X) e_b
 \end{gather*}
Observe that this is the same as viewing  $De_a$  as a $\binom{1}{1}$-tensor on $T^*M \otimes u^*TN\to M$ in the sense that $De_a: TM \times u^*T^*N \to \R$ is a bilinear map over $C^{\infty}(M)$ . Then we have that 
 
 \begin{align*}
 A^b_a = u^*h(A^c_a \otimes e_c, e_b)
 \end{align*}
where $u^*h$ is the metric on $u^*TN$.   In local coordinates,  $A^b_a$ is given by $A^b_{a, \al}dx^{\al}$ where the  coefficients of $A^b_a$ are defined by  $A^b_{a, \al} = A^b_a(\p_{\al})$.  Hence if $X$ is given in local coordinates by $X= X^{\al}\p_{\al}$ we have that $D_Xe_a = X^{\al}A_{ a, \al}^{b}e_b$. 

One should also note that for a fixed coordinate $\al$, the matrix $(A^{a}_{b, \al})$ is antisymmetric. That is, $A^{a}_{b, \al} = -A^b_{a, \al}$.  To see this, simply differentiate the orthogonality condition of our orthonormal frame,  $h( e_a, e_b) = \de_{ab}$. This gives

\begin{align*}
0 &= D\left( h( e_a, e_b)\right) \\
&= h(De_a, e_b) + h(e_a, De_b)\\
&= A_a^b + A_b^a
\end{align*}

The curvature tensor, F,  on $u^*TN$ can be represented in term of the connection form $A$. Viewed as a  $2$-form,  $F$ is given by $F= dA+ A\wedge A$. We can also represent $F$ in terms of the curvature tensor on $TN$. In local coordinates,  $F$ is given by $F_{\al \be} = R(u)(\p_{\al }u, \p_{\be} u)$.

As in ~\cite[Lemma 4.1.3]{Hel},  we choose our rotation $B$ so that at for each $s\in\R$,  $B(s, \cdot)$  minimizes  the functional 

\begin{align*}
\Lambda(B(s)) &= \int_M \sum_{a,b=1}^n g^{-1}(A^b_a(s), A^b_a(s)) \, dvol_g\\
&= \int_M \sum_{a,b=1}^n g^{\al \be}A^b_{a,\al}(s) A^b_{a,\be}(s) \, \sqrt{\abs{g}}\, dx
\end{align*}
This gives us a frame $e$ that we call the Coulomb frame.  The Euler-Lagrange equations for this minimization problem are given by 
 
\begin{align}\label{EL eqns}
 \frac{1}{\sqrt{\abs{g}}} \p_{\al} (\sqrt{\abs{g}} g^{\al\be}A_{\be}) = 0
 \end{align}
 The above equation implies that $\de A= 0$ since the exterior co-differential, $\de$, on $1$-forms is given in local coordinates by 
 
 \begin{align}\label{E-L for A}
 -\de A =  \frac{1}{\sqrt{\abs{g}}} \p_{\al} (\sqrt{\abs{g}} g^{\al\be}A_{\be}) = 0
 \end{align}
 Since the Hodge Laplacian $\Delta$ on $M$ is given by $\Delta= d\de + \de d$, ~\eqref{E-L for  A} implies the following differential equation of $1$-forms for $A$ 
 
 \begin{align*} 
 \Delta A = \de d A
 \end{align*}
 Using the fact that the curvature form $F$ satisfies $F= dA + A \wedge A$ we can rewrite the above equation for $A$ as 

 \begin{align}\label{eqn for A}
 \Delta A = \de(F- A\wedge A)
 \end{align}
 In local coordinates we can write this in components as
 
 \begin{align}\label{eqn for A local}
 (\Delta A)_{\ga} = -[\n^{\al}(F-A\wedge A)]_{\al \ga}
 \end{align}
 where $\n^{\al} = g^{\al \be}\n_{\be} $ and $\n$ denotes the Levi-Civita connection on $M$.

Observe that ~\eqref{eqn for A local} can be written as system of elliptic equations for the components of $A$ in local coordinates on $M$. We record this fact in the following lemma:
  
  \begin{lem} The components of A satisfy the following system of elliptic equations 
 
  \begin{multline}\label{local elliptic eqn}
  g^{i j} \p_{i }\p_{j}A_{\ga} - g^{i j} \Gamma^{k}_{i j} \p_{\ga} A_{k} +\p_{\ga} g^{i j} \p_{j} A_{i} - \p_{\ga}( g^{ij} \Gamma^{k}_{ij} ) A_{k} \\= g^{i j} \p_{j} \left( F_{i \ga} - [A_{i}, A_{\ga}] \right)
  \end{multline}
  where the $\Gamma^k_{i j}= \frac{1}{2}g^{km}\left( \p_i g_{mj} + \p_j g_{im}- \p_m g_{ij}\right)$ denote the Christoffel symbols on ~$M$. 
  \end{lem}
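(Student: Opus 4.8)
The plan is to translate the coordinate-free equation \eqref{eqn for A} into local coordinates by carefully unpacking the Hodge codifferential $\delta$, the exterior derivative $d$, and the Levi-Civita connection $\nabla$ on $M=(\R^4,g)$, and then rewriting all covariant derivatives in terms of partial derivatives plus Christoffel correction terms. Concretely, I would start from \eqref{eqn for A local}, i.e. $(\Delta A)_\gamma = -[\nabla^\alpha(F-A\wedge A)]_{\alpha\gamma}$, and compute each side.

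\textbf{Left-hand side.} The Hodge Laplacian acting on a $1$-form has the Weitzenb\"ock expression $\Delta A = \nabla^*\nabla A + \mathrm{Ric}(A)$, but rather than invoke that, it is cleaner here to expand $\Delta A = d\delta A + \delta d A$ directly in coordinates, or equivalently to expand the rough Laplacian $g^{ij}\nabla_i\nabla_j A_\gamma$ and collect terms. Writing $\nabla_i\nabla_j A_\gamma = \p_i(\nabla_j A_\gamma) - \Gamma^k_{ij}\nabla_k A_\gamma - \Gamma^k_{i\gamma}\nabla_j A_k$ and then substituting $\nabla_j A_\gamma = \p_j A_\gamma - \Gamma^k_{j\gamma}A_k$, one gets $g^{ij}\p_i\p_j A_\gamma$ as the principal term, a term $-g^{ij}\Gamma^k_{ij}\p_\gamma A_k$ of the type appearing in \eqref{local elliptic eqn} (after using the index symmetry $\Gamma^k_{j\gamma}=\Gamma^k_{\gamma j}$ to pair the right factors), plus terms involving $\p\Gamma$ and $\Gamma\cdot\Gamma$ acting on $A$ undifferentiated. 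Those last terms must be reorganized; the cleanest route is to avoid Ricci curvature entirely by instead expanding $\delta d A + d\delta A$ with $\delta A=0$ imposed from the start: since $\delta A = 0$, only $\delta d A$ survives, and writing $(dA)_{i\gamma}=\p_i A_\gamma - \p_\gamma A_i$ and $(\delta dA)_\gamma = -\frac{1}{\sqrt{|g|}}\p_i(\sqrt{|g|}g^{ij}(dA)_{j\gamma})$ — actually the sign and contraction convention of $\delta$ on $2$-forms here must be tracked against \eqref{E-L for A}. Expanding $\frac{1}{\sqrt{|g|}}\p_i(\sqrt{|g|}g^{ij}\,\cdot\,)$ produces $g^{ij}\p_i\p_j A_\gamma$, the term $\p_\gamma g^{ij}\p_j A_i$ (from differentiating the $dA$ factor in the $\p_\gamma A_i$ slot and integrating by the $\delta A=0$ relation), and, after substituting $\delta A = 0$ in the form $g^{ij}\p_i A_j = -(\p_i g^{ij} + g^{ij}\p_i\log\sqrt{|g|})A_j = g^{ij}\Gamma^k_{ij}A_k + (\text{stuff})$ and differentiating, the terms $-\p_\gamma(g^{ij}\Gamma^k_{ij})A_k$ and $-g^{ij}\Gamma^k_{ij}\p_\gamma A_k$. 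Matching the standard identity $\p_i\log\sqrt{|g|} = \Gamma^k_{ki}$ and $\p_i g^{ij} = -g^{i\ell}g^{jm}\p_i g_{\ell m}$ with the Christoffel definition gives exactly the four terms on the left of \eqref{local elliptic eqn}.

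\textbf{Right-hand side.} Here I would compute $[\nabla^\alpha(F-A\wedge A)]_{\alpha\gamma} = g^{ij}\nabla_i(F-A\wedge A)_{j\gamma}$ and argue that the Christoffel terms from the covariant derivative of the $2$-form $F - A\wedge A$ — namely $-g^{ij}\Gamma^k_{ij}(F-A\wedge A)_{k\gamma} - g^{ij}\Gamma^k_{i\gamma}(F-A\wedge A)_{jk}$ — can be absorbed or shown to vanish. In fact, for this to produce the clean right side $g^{ij}\p_j(F_{i\gamma}-[A_i,A_\gamma])$ one uses that $\delta$ applied to a $2$-form, written via $\frac{1}{\sqrt{|g|}}\p_j(\sqrt{|g|}g^{ij}(\cdot)_{i\gamma})$, already accounts for the $\sqrt{|g|}$ and $\p g^{ij}$ contributions, and that the remaining Christoffel term contracted against the antisymmetric pair $(F-A\wedge A)_{jk}$ against the symmetric $g^{ij}\Gamma^k_{i\gamma}$... — one must check this contraction carefully; it does \emph{not} vanish in general, so the honest statement is that $(\delta\omega)_\gamma$ for a $2$-form equals $-\frac{1}{\sqrt{|g|}}g^{ij}\p_i(\sqrt{|g|}\,\omega_{j\gamma}) - g^{ij}\Gamma^k_{i\gamma}\omega_{jk}$, and the last term, when $\omega = F - A\wedge A$, gets moved to the left side and combined — but since the lemma as stated puts everything with the stated four left-side terms only, I would instead simply expand $(\Delta A)_\gamma = (\delta dA)_\gamma$ and $-(\delta(F-A\wedge A))_\gamma$ using the \emph{same} formula for $\delta$ on $2$-forms on both sides (noting $dA = F - A\wedge A$ as $2$-forms via \eqref{eqn for A}), so that all curvature/Christoffel corrections beyond the principal symbol are identical on both sides and the identity \eqref{local elliptic eqn} reduces to bookkeeping of the $\delta A = 0$ substitution. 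Writing $\omega := dA = F - A\wedge A$, we have $(\delta\omega)_\gamma$ computed one way $= (\delta dA)_\gamma = (\Delta A)_\gamma$, and the lemma's left side is precisely what one obtains by writing $(\delta dA)_\gamma$ out with $dA$ replaced by $(\p_i A_\gamma - \p_\gamma A_i)$ and then using the coordinate form of $\delta A = 0$; the lemma's right side is $(\delta\omega)_\gamma$ with $\omega$ left as $F - [A,A]$.

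\textbf{Main obstacle.} The real work is bookkeeping: keeping consistent sign and index conventions for $\delta$ on $1$- and $2$-forms (there are competing conventions differing by signs and by whether one contracts the first or a raised index), correctly using the identities $\Gamma^k_{ki} = \p_i\log\sqrt{|g|}$ and $g^{k\ell}\p_\gamma g_{k\ell} = \p_\gamma\log|g|$, and verifying that the terms involving $\p\Gamma$ and $\Gamma\Gamma$ acting on undifferentiated $A$ reorganize \emph{exactly} into $-\p_\gamma(g^{ij}\Gamma^k_{ij})A_k$ with no leftover Ricci-type term — this last point is the one genuinely delicate check, and it works precisely because one derives the identity from \eqref{E-L for A}/\eqref{eqn for A} (where $\delta A = 0$ is already built in) rather than from the Weitzenb\"ock formula, so that the Ricci term never appears. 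I expect no analytic difficulty whatsoever — this is a purely algebraic local-coordinate computation — so the proof in the paper is likely to be one or two lines referring to this expansion, and I would present it the same way: expand $\delta$ and $d$ in coordinates, impose \eqref{E-L for A}, and collect terms.
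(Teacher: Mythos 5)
Your final strategy is correct and aligned with the paper's proof: expand $\Delta A = d\delta A + \delta dA$ and $\delta(F-A\wedge A)$ in local coordinates and cancel the Christoffel-correction terms using $(dA)_{ij} = F_{ij} - [A_i,A_j]$, since $dA$ and $F-A\wedge A$ are the same $2$-form. The one genuine difference is a stylistic choice in how the ``middle'' terms of the left side arise: the paper never reinvokes $\delta A = 0$ in the proof of this lemma. It simply carries the $d\delta A$ piece as the coordinate expression $-\p_\ga\bigl(g^{ij}(\n_j A)_i\bigr)$, expands both sides of \eqref{eqn for A local}, cancels the $g^{ij}\Gamma^k_{ij}(\cdot)_{k\ga}$ and $g^{ij}\Gamma^k_{\ga j}(\cdot)_{ik}$ terms via $dA = F - A\wedge A$, and reads off \eqref{local elliptic eqn} after writing out $(\n_j A)_i = \p_j A_i - \Gamma^k_{ji}A_k$. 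Your proposed variant, namely to drop $d\delta A$ at the outset because $\delta A = 0$, expand only $\delta dA$ and $\delta(F-A\wedge A)$, cancel the Christoffel terms, and then reinsert the coordinate form of $\delta A = 0$ (i.e.\ $g^{ij}\p_i A_j = g^{ij}\Gamma^k_{ij}A_k$, differentiated in $\ga$) to convert the leftover $g^{ij}\p_i\p_\ga A_j$ into the lemma's middle terms, is arithmetically equivalent and equally valid; it trades one substitution for another. The paper's version is arguably slightly cleaner because it avoids the side identity $\p_i g^{ik} + g^{ik}\Gamma^j_{ji} = -g^{ij}\Gamma^k_{ij}$, which you flagged (with the hedged ``$+$ stuff'') but which does need to be checked on your route. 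Also note your exploratory worry about the term $g^{ij}\Gamma^k_{i\ga}\omega_{jk}$ was a non-issue precisely for the reason you yourself then identified: that term appears identically on both sides when both are written as $\delta$ of the same $2$-form, so it cancels rather than needing to vanish.
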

  
  \begin{proof} We first expand the left-hand side of ~\eqref{eqn for A local} 
  
  \begin{align*}
  (\Delta A)_{\ga} &= (d\de A)_{\ga} + (\de d A)_{\ga}\\
  \\
  &= -\p_{\ga}(g^{i j} (\n_{j} A)_{i})  - g^{i j} (\n_{j} dA)_{i \ga}\\
  \\
  &= -\left(\p_{\ga} g^{ij}\right)(\n_{j} A)_{i} - g^{ij} \p_{\ga} \left(\p_{j} A_i -  \Gamma^{k}_{ij}A_k\right)\\
  &\quad  - g^{ij}\left(\p_j(dA)_{i\ga} - \Gamma^k_{ij}(dA)_{k \ga} - \Gamma^k_{\ga j}(dA)_{i k}\right)\\
  \\
  &= -g^{ij}\p_i \p_j A_{\ga} -\left(\p_{\ga}g^{ij}\right)(\n_{j} A)_{i}  + g^{ij}\p_{\ga}\left( \Gamma^{k}_{ij}A_k\right)\\
  &\quad + g^{ij}\Gamma^k_{ij}(dA)_{k \ga} + g^{ij}\Gamma^k_{\ga j}(dA)_{i k}
  \end{align*}
  Similarly, we expand the right-hand side of ~\eqref{eqn for A local} 
  
  \begin{align*}
  -[\n^{i}(F-A\wedge A)]_{i \ga}&= -g^{ij}\p_j\left(F_{i \ga} - \left[ A_i, A_{\ga}\right] \right) + g^{ij}\Gamma^k_{ij}\left(F_{k \ga} - \left[A_k, A_{\ga}\right] \right) \\
  &\quad + g^{ij} \Gamma^k_{j \ga}\left( F_{ik} - \left[ A_i, A_k\right]\right)
  \end{align*}
  Equating the left and right hand sides and recalling that $(dA)_{ij} = F_{ij} - [A_i, A_j]$ we have 
  
  \begin{align*}
  g^{ij}\p_i \p_j A_{\ga} +\left(\p_{\ga}g^{ij}\right)(\n_{j} A)_{i}  - g^{ij}\p_{\ga}\left( \Gamma^{k}_{ij}A_k\right) = g^{ij}\p_j\left(F_{i \ga} - \left[ A_i, A_{\ga}\right] \right)
  \end{align*}
  which is exactly ~\eqref{local elliptic eqn}.
  \end{proof}

\vspace{\baselineskip}

\subsection{Connection Form Estimates}\label{connection form estimates} With the metric $g$ as in ~\eqref{g}--\eqref{p^2 g} and $\ep$ small enough,  we can use the elliptic system  ~\eqref{local elliptic eqn} to establish a variety of estimates for the connection form $A$. In particular, we can prove the following proposition which will be essential when deriving a priori estimates for wave maps. 

\begin{prop}\label{main elliptic estimates} Let $(N,h)$ be a $n$-dimensional manifold smoothly embedded in $\R^m$ with bounded geometry and a bounded parallelizable structure.  Let $u: (\R\times \R^4, \eta) \to (N,h)$ be a smooth map with $\eta=\textrm{diag}(-1, g)$ and $g$ as in ~\eqref{g}--\eqref{p^2 g}. Moreover, assume the bootstrap hypothesis,  
\begin{align}\label{bootstrap}
\sup_{t\in[0,T)}\|du\|_{\dot{H}^1} \lesssim \ep_0
\end{align}
Then, for each $t\in \R$, there exists a unique frame $e=(e_1, \dots, e_n)$ for $u^*TN$ with the associated connection form, $A$,  satisfying the uniform-in-time estimates
\\

\begin{list}{(\roman{parts})}{\usecounter{parts}}
\item $\ds{\norm{A}_{L^4} \lesssim \|du\|_{H^1} \lesssim \ep_0 \label{A L4}}$
\\
\item  $\ds{\norm{A}_{\dot{W}^{1,\frac{8}{3}}} \lesssim \|du\|_{L^8} \|du\|_{\dot{H}^1}}$
\\
\item $\ds{\norm{A}_{\dot{W}^{2,\frac{8}{5}}} \lesssim \|du\|_{L^8} \|du\|_{\dot{H}^1}}$
\\
\item $ \|A\|_{L^{\infty}} \lesssim \|du\|_{L^{8,2}(\R^4)}^2$
\end{list}
\vspace{\baselineskip}
 as long as $\ep_0$ is small enough. Also, the frame $e$, and hence $A$, depend continuously on $t$. Above, $L^{8,2}=L^{8,2}(\R^4)$ denotes the Lorentz space.
\end{prop}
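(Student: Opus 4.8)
The plan is to treat \eqref{local elliptic eqn} as a perturbation of the constant-coefficient Laplacian on $\R^4$ and bootstrap, estimate by estimate, using the smallness of $g-g_0$ and of $\partial g$, $\partial^2 g$ in the Lorentz norms \eqref{g}--\eqref{p^2 g}, together with the bootstrap hypothesis \eqref{bootstrap} on $du$. First I would set up the existence and uniqueness of the Coulomb frame $e$: the functional $\Lambda(B(s))$ is coercive and weakly lower semicontinuous on $H^1(M;SO(n))$ once one knows $A$ is controlled, so the standard variational argument (as in \cite{Hel}) produces a minimizer, whose Euler--Lagrange equation is \eqref{EL eqns}; continuity in $t$ follows from uniqueness plus continuity of $u$ in the relevant norm. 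I would then rewrite \eqref{local elliptic eqn} schematically as
\begin{align*}
\Delta A = \partial\!\left(F - A\wedge A\right) + (g-g_0)\,\partial^2 A + (\partial g)\,\partial A + (\partial^2 g + (\partial g)^2)\,A,
\end{align*}
where $\Delta$ is now the flat Laplacian on $\R^4$, $F_{i\gamma} = R(u)(\partial_i u,\partial_\gamma u)$ is quadratic in $du$ with bounded coefficients, and the last three groups of terms are the ``error'' coming from the non-flat metric. The point is that each error term carries a factor of $\varepsilon$ and can be absorbed into the left side via elliptic regularity once $\varepsilon$ is small, so the estimates are reduced, up to $\varepsilon$-small perturbations, to the flat elliptic estimates for $\Delta^{-1}\partial(\cdot)$ and $\Delta^{-1}(\cdot)$.

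For part (i), I would test the equation against $A$ (integrate by parts), or equivalently invoke the Calderón--Zygmund/Sobolev bound $\|A\|_{L^4} \lesssim \|\nabla A\|_{L^{?}}$; more directly, since $\delta A = 0$ and $dA = F - A\wedge A$, $A$ is essentially $\Delta^{-1}\delta(F - A\wedge A)$, so $\|A\|_{L^4} \lesssim \|\nabla\Delta^{-1}(F-A\wedge A)\|_{L^4}$. Here $F$ is quadratic in $du$: $\|F\|_{L^2} \lesssim \|du\|_{L^4}^2 \lesssim \|du\|_{\dot H^1}^2$ by Sobolev $\dot H^1(\R^4)\hookrightarrow L^4$, and $\nabla\Delta^{-1}: L^2 \to \dot H^1 \hookrightarrow L^4$; the $A\wedge A$ term is handled by $\|A\wedge A\|_{L^2} \lesssim \|A\|_{L^4}^2$, which is absorbed once $\|A\|_{L^4}$ is known small (again via $\ep_0$ small), after first establishing the a priori bound on a space where the quadratic term is subcritical. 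Combined with the $\varepsilon$-small metric error terms (e.g.\ $\|(g-g_0)\partial^2 A\|$ controlled by $\varepsilon\|A\|_{\dot W^{2,\cdot}}$, absorbed later, or on the right via duality at this stage), one gets $\|A\|_{L^4} \lesssim \|du\|_{H^1}$. For parts (ii) and (iii), I would bound $\|A\|_{\dot W^{1,8/3}}$ and $\|A\|_{\dot W^{2,8/5}}$ by applying $\nabla\Delta^{-1}$ and $\nabla^2\Delta^{-1}$ (Calderón--Zygmund on $L^p$, $1<p<\infty$) to the right-hand side: the leading term $\partial(F - A\wedge A)$ gives $\|A\|_{\dot W^{1,8/3}} \lesssim \|F\|_{L^{8/3}} + \|A\wedge A\|_{L^{8/3}}$, and $\|F\|_{L^{8/3}} \lesssim \|du\|_{L^8}\|du\|_{L^4} \lesssim \|du\|_{L^8}\|du\|_{\dot H^1}$ by Hölder with $\tfrac{3}{8} = \tfrac18 + \tfrac14$, while $\|A\wedge A\|_{L^{8/3}} \lesssim \|A\|_{L^8}\|A\|_{L^4}$ with $\|A\|_{L^8} \lesssim \|A\|_{\dot W^{1,8/3}}$ (Sobolev on $\R^4$) absorbed by smallness; the $\dot W^{2,8/5}$ estimate is analogous with the Hölder split $\tfrac58 = \tfrac18+\tfrac18+\tfrac14$ applied to $\partial^2 F$-type and $\partial(A\,\partial A)$-type terms, controlling $\|A\|_{\dot W^{2,8/5}}$, and the metric-error terms contribute factors of $\varepsilon$ times higher norms of $A$ that close by absorption.

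The genuinely delicate part is (iv), the $L^\infty$ bound, because $\dot W^{2,8/5}(\R^4)$ does not embed into $L^\infty$ (it is the critical Sobolev embedding $\dot W^{2,8/5}\hookrightarrow BMO$, or $\dot W^{2,d/2}$ in general), so one cannot simply quote (iii). This is exactly the place where the Lorentz spaces enter: the sharp endpoint estimate is that $\nabla^2\Delta^{-1}: L^{2,1}(\R^4) \to L^{\infty}(\R^4)$ (equivalently $\Delta^{-1}: L^{2,1} \to L^\infty$ fails but the second-derivative-order scaling works through the refined Sobolev embedding $\dot W^{2,(2,1)} \hookrightarrow L^\infty$), so I would need to place $F - A\wedge A$ in $L^{2,1}$. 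For $F$ this follows from the bilinear Lorentz--Hölder inequality $\|du\otimes du\|_{L^{2,1}} \lesssim \|du\|_{L^{8,2}}\|du\|_{L^{8/3,2}}$ — here $\tfrac12 = \tfrac18 + \tfrac38$ and $1 \ge \tfrac12 + \tfrac12$ — combined with $\|du\|_{L^{8/3,2}} \lesssim \|du\|_{\dot H^1}$ (Lorentz-refined Sobolev embedding, since $\dot H^1(\R^4) = \dot W^{1,2} \hookrightarrow L^{4,2}$, and interpolating/using $du\in \dot H^1$ also gives $L^{8/3,2}$ control via fractional integration), so that $\|F\|_{L^{2,1}} \lesssim \|du\|_{L^{8,2}}^2$ after noting $\|du\|_{\dot H^1}\lesssim\|du\|_{L^{8,2}}$ is not literally true — rather one writes $\|F\|_{L^{2,1}}\lesssim\|du\|_{L^{8,2}}\|du\|_{L^{8/3,2}}$ and absorbs $\|du\|_{L^{8/3,2}}\lesssim\ep_0$. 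For the quadratic term $A\wedge A$ one uses $\|A\wedge A\|_{L^{2,1}} \lesssim \|A\|_{L^{4,2}}^2$ and $\|A\|_{L^{4,2}} \lesssim \|A\|_{\dot W^{1,8/3}}$-type bounds from (ii) (or directly $\dot W^{1,(8/3)}\hookrightarrow L^{(8,?)}$, interpolated down), all absorbed by $\ep_0$-smallness; and the metric errors, being $\varepsilon$-small multiples of $A$ and its derivatives in the already-controlled norms, are absorbed. Finally one must also check that the commutator/lower-order terms $g^{ij}\Gamma^k_{ij}\partial_\gamma A_k$ etc.\ land in $L^{2,1}$, which they do because $\partial g \in L^{4,1}$ pairs with $\partial A \in L^{4,\infty}$ (or better) via Lorentz--Hölder $\tfrac12 = \tfrac14 + \tfrac14$ with $1 \le 1 + \infty$. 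Putting the pieces together and choosing $\ep_0$ (and $\varepsilon$) small enough to close all the absorption arguments simultaneously yields $\|A\|_{L^\infty} \lesssim \|du\|_{L^{8,2}}^2$, completing the proof.
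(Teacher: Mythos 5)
Your outline for parts (i)--(iii) is essentially the paper's argument: treat \eqref{local elliptic eqn} as a perturbation of the flat Laplacian, absorb the $\varepsilon$-small metric errors by duality/Sobolev, and estimate $F = R(u)(du,du)$ and $A\wedge A$ by H\"older in the indicated exponents. For (i) the paper actually runs a contraction-mapping fixed point in the ball $\{\|A\|_{L^4}+\|A\|_{\dot H^1}\le\ep_0\}$ at a fixed time and then bootstraps in $t$ to get the uniform-in-time smallness, rather than a variational existence plus separate a priori bound, but these are interchangeable once Lemma~\ref{elliptic estimates} is in hand.

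Part (iv), however, contains a genuine scaling error. The elliptic equation for $A$ is $\Delta A = \partial(F - A\wedge A) + (\text{metric errors})$; the nonlinearity $G = F - A\wedge A$ appears under one derivative, so the parametrix gains only \emph{one} derivative on $G$, not two. The correct statement is therefore that $A \approx (\partial K)\ast G$ with $\partial K \sim |x|^{-3} \in L^{4/3,\infty}(\R^4)$, and Young's inequality for Lorentz spaces forces the target $G \in L^{4,1}$, giving
\[
\|A\|_{L^\infty} \lesssim \|\partial K\|_{L^{4/3,\infty}}\,\|G\|_{L^{4,1}} \lesssim \|A\|_{L^{8,2}}^2 + \|du\|_{L^{8,2}}^2,
\]
which closes cleanly because $\|A\|_{L^{8,2}}\lesssim \|A\|_{\dot W^{2,8/5}}\lesssim\|du\|_{L^{8,2}}$ from (iii). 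Your proposed target $L^{2,1}$ corresponds to $\dot W^{2,(2,1)}\hookrightarrow L^\infty$, i.e.\ to an equation $\Delta A = G$ with \emph{no} derivative on the right; to use it here you would need $\partial G\in L^{2,1}$, hence a bound on $\|\partial du\cdot du\|_{L^{2,1}}$, which is not available. And indeed your attempted H\"older $\|du\otimes du\|_{L^{2,1}}\lesssim\|du\|_{L^{8,2}}\|du\|_{L^{8/3,2}}$ cannot be closed: $\dot H^1(\R^4)\hookrightarrow L^{4,2}$ only, and interpolating with $du\in L^{8,2}$ gives $L^{p,2}$ for $4\le p\le 8$, never $L^{8/3,2}$ (which would need $\dot H^{1/2}$ control). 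You flag this yourself (``$\|du\|_{\dot H^1}\lesssim\|du\|_{L^{8,2}}$ is not literally true''), but the fallback of absorbing $\|du\|_{L^{8/3,2}}\lesssim\ep_0$ both fails for the reason above and would in any case only yield $\|A\|_{L^\infty}\lesssim\ep_0\|du\|_{L^{8,2}}$, which is \emph{not} the quadratic bound $\|du\|_{L^{8,2}}^2$ needed later (the quadratic structure is what makes $\|A\|_{L^2_tL^\infty_x}\lesssim\|q\|_{L^2_tL^{8,2}_x}^2$ and closes the nonlinear estimate for $H$). Finally, the error terms from the non-flat metric in (iv) are not simply ``$\varepsilon$-small multiples of already-controlled norms'': the term $k\ast (g-g_0)\partial^2 A$ requires a separate intermediate estimate $\|\partial A\|_{L^{4,1}}\lesssim\|du\|_{L^{8,2}}^2+\varepsilon\|A\|_{L^\infty}$ (the paper's Lemma~\ref{grad A}) before the $L^\infty$ bound can be closed by absorption; your sketch omits this step entirely.
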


The estimates are deduced via a perturbative method as the assumptions in~\eqref{g}--\eqref{p^2 g} imply that the left hand side of ~\eqref{local elliptic eqn} is a slight perturbation of the flat Laplacian on $\R^4$. To simplify notation, in what follows we consider an elliptic operator of the form 
 
 \begin{align}\label{def L}
 L:= g^{ij}\p_i \p_j + b^j \p_j + c
 \end{align}
and the elliptic system

\begin{align}\label{L}
L A_{\ell}= g^{i j} \p_{j} G_{i \ell} 
\end{align}
where $G_{i\ell} :=  F_{i \ell} - [A_{i}, A_{\ell}] $, and $b$ and $c$ satisfy

\begin{align}
\|b\|_{L^{4,1}(\R^4)} \lesssim \ep \label{b}\\ \notag
\\
\|\p b\|_{L^{2,1}(\R^4)} \lesssim \ep \label{p b}\\ \notag
\\
\|c\|_{L^{2,1}(\R^4)} \lesssim \ep \label{c}
\end{align}
Since $\Gamma_{ij}^k= \frac{1}{2}g^{k\ell}(\p_i g_{\ell j} + \p_j g_{i\ell} - \p_{\ell} g_{ij})$, it is clear that the left-hand side of ~\eqref{local elliptic eqn} is essentially of  this form.

 We begin by recalling some basic elliptic estimates. Let $g_0$ denote the Euclidean metric on $\R^4$ and let $L_0 := g_0^{ij} \p_i \p_j$ denote the flat Laplacian on $\R^4$. Then we have 
 
 \begin{align}\label{basic elliptic}
 \|A\|_{\dot{W}^{s+2,p}} \lesssim \|L_0 A\|_{\dot{W}^{s,p}}
 \end{align}
 for every $s \in \R$ and for every $1<p<\infty$. 
With \eqref{basic elliptic} we can prove the following elliptic estimates for the connection form $A$. 
 
 \begin{lem}  \label{elliptic estimates}
 Let $A$ be the connection form associated to the Coulomb frame $e$. Then, if $\ep$ is small enough,  we have the following uniform-in-time estimates
\begin{list}{(\roman{parts})}{\usecounter{parts}}
\item $\ds{ \|A\|_{\dot{W}^{1,p}} \lesssim \|[A, A]\|_{L^p} + \|F\|_{L^p} }$ if $1<p<4$
\\
\item $\ds{ \|A\|_{\dot{W}^{2,p}} \lesssim \|[A, A]\|_{\dot{W}^{1,p}}  + \|F\|_{\dot{W}^{1,p}} }$ if $1<p<2$. 
\end{list}
where $F$ denotes the curvature tensor on $u^*TN$.  
\end{lem}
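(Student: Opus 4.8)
The plan is to treat the elliptic system \eqref{L} as a perturbation of the flat Laplacian $L_0$ on $\R^4$ and bootstrap the $\dot W^{s,p}$ estimates off the basic elliptic estimate \eqref{basic elliptic}. Write $L A_\ell = L_0 A_\ell + (g^{ij}-g_0^{ij})\p_i\p_j A_\ell + b^j \p_j A_\ell + c A_\ell$, so that
\begin{align*}
L_0 A_\ell = g^{ij}\p_j G_{i\ell} - (g^{ij}-g_0^{ij})\p_i\p_j A_\ell - b^j \p_j A_\ell - c A_\ell.
\end{align*}
For part (i), I would apply \eqref{basic elliptic} with $s=-1$ (so $\dot W^{s+2,p}=\dot W^{1,p}$ is controlled by $\dot W^{-1,p}$ of the right-hand side) for $1<p<4$. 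The main term $g^{ij}\p_j G_{i\ell}$ is a derivative of an $L^p$ function, hence bounded in $\dot W^{-1,p}$ by $\|G\|_{L^p}\lesssim \|[A,A]\|_{L^p}+\|F\|_{L^p}$, using that $g$ and $g^{-1}$ are bounded by \eqref{g}. The perturbative terms must be absorbed: $\|(g-g_0)\p^2 A\|_{\dot W^{-1,p}}$ is the delicate one — I would not estimate it crudely in $L^p$ (that would cost $\dot W^{2,p}$ of $A$), but rather write $(g^{ij}-g_0^{ij})\p_i\p_j A_\ell = \p_i\big((g^{ij}-g_0^{ij})\p_j A_\ell\big) - (\p_i g^{ij})\p_j A_\ell$, so in $\dot W^{-1,p}$ it is bounded by $\|(g-g_0)\p A\|_{L^p} + \|\p g\,\p A\|_{\dot W^{-1,p}}$; the first is $\le \ep \|A\|_{\dot W^{1,p}}$ by \eqref{g}, and the second by Hölder with \eqref{p g} in $L^{4,1}$ (and Sobolev embedding / Lorentz Hölder to land in $\dot W^{-1,p}$) is $\lesssim \ep \|A\|_{\dot W^{1,p}}$. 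Similarly $\|b^j\p_j A\|_{\dot W^{-1,p}}\lesssim \|b\,A\|_{L^p} + \dots \lesssim \|b\|_{L^{4,1}}\|A\|_{L^{p^*}}\lesssim \ep\|A\|_{\dot W^{1,p}}$ using \eqref{b}, \eqref{p b} and Sobolev embedding, and $\|cA\|_{\dot W^{-1,p}}\lesssim \|c\|_{L^{2,1}}\|A\|_{L^q}\lesssim \ep\|A\|_{\dot W^{1,p}}$ for the appropriate $q$ by \eqref{c}. Choosing $\ep$ small, all these perturbative contributions are absorbed into the left-hand side, yielding (i).

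For part (ii), the strategy is identical but one derivative higher: apply \eqref{basic elliptic} with $s=0$ to get $\|A\|_{\dot W^{2,p}}\lesssim \|L_0 A\|_{L^p}$ for $1<p<2$. The main term $\|g^{ij}\p_j G_{i\ell}\|_{L^p}\lesssim \|G\|_{\dot W^{1,p}} + \|\p g\|_{?}\|G\|_{L^{?}}\lesssim \|[A,A]\|_{\dot W^{1,p}}+\|F\|_{\dot W^{1,p}}$, where the lower-order piece $\|\p g\, G\|_{L^p}$ is handled by Hölder using \eqref{p g} and Sobolev embedding of $\dot W^{1,p}(\R^4)$. The perturbative terms $\|(g-g_0)\p^2 A\|_{L^p}\le \ep\|A\|_{\dot W^{2,p}}$, $\|b\,\p A\|_{L^p}\lesssim \|b\|_{L^{4,1}}\|\p A\|_{L^{p'}}$ with $\dot W^{2,p}\hookrightarrow \dot W^{1,p'}$, and $\|cA\|_{L^p}\lesssim \|c\|_{L^{2,1}}\|A\|_{L^{p''}}$ with $\dot W^{2,p}\hookrightarrow L^{p''}$, are all $\lesssim \ep\|A\|_{\dot W^{2,p}}$ for appropriate exponents dictated by the Sobolev embeddings on $\R^4$, and get absorbed. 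One caveat: to run the absorption argument rigorously one should first know $\|A\|_{\dot W^{1,p}}$ and $\|A\|_{\dot W^{2,p}}$ are finite — this follows from the smoothness of $u$ and the Coulomb-gauge construction, or alternatively one works on a truncated/regularized problem and passes to the limit. I would remark on this briefly rather than belabor it.

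The main obstacle is bookkeeping the Lorentz-space Hölder inequalities and Sobolev embeddings so that every perturbative term genuinely carries a factor of $\ep$ times the \emph{same} norm of $A$ appearing on the left — in particular handling $(g-g_0)\p^2 A$ in the negative-order space for (i) via the integration-by-parts trick above, and making sure the exponents in $\|b\,\p A\|_{L^p}$ and $\|cA\|_{L^p}$ close under the $\R^4$ Sobolev embeddings (this is why the restrictions $1<p<4$ and $1<p<2$ appear, and why the sharp Lorentz assumptions \eqref{p g}--\eqref{c} rather than merely $L^4$, $L^2$ are used — the endpoint $L^{4,1}$, $L^{2,1}$ integrability buys the needed Hölder duality). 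Once the exponent arithmetic is pinned down, the proof is a clean fixed-$\ep$ absorption argument.
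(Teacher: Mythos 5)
Your proposal is correct and follows essentially the same route as the paper: write $L = L_0 + (L - L_0)$, apply the flat estimate \eqref{basic elliptic} at $s=-1$ (resp.\ $s=0$), and absorb the perturbative pieces using \eqref{g}--\eqref{c} with H\"older, Lorentz duality, and Sobolev embedding in $\R^4$. The only cosmetic difference is your treatment of $(g^{ij}-g_0^{ij})\p_i\p_j A$ in part (i): you integrate by parts to rewrite it as $\p_i\big((g-g_0)\p_j A\big)-(\p_i g^{ij})\p_j A$ and then never touch a negative-order norm of a second derivative of $A$, whereas the paper estimates $\|(g^{-1}-g_0^{-1})\p^2 A\|_{\dot W^{-1,p}}$ directly via the dual bound $\|(g^{-1}-g_0^{-1})f\|_{\dot W^{1,p'}}\lesssim\ep\|f\|_{\dot W^{1,p'}}$ — these two computations are dual to each other, so the content is the same. (Minor slip in your part (ii): since the source term is already $g^{ij}\p_j G_{i\ell}$ with the derivative on $G$, it is bounded in $L^p$ simply by $\|g^{-1}\|_{L^\infty}\|\p G\|_{L^p}$; no $\|\p g\, G\|$ piece is needed.)
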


\begin{proof} Let $L_0$ and $L$ be defined as above and write $LA = L_0A + (L-L_0)A$. Hence, 

\begin{align*}
\|LA\|_{\dot{W}^{s,p}} \ge \|L_0A\|_{\dot{W}^{s,p}}  - \|(L-L_0)A\|_{\dot{W}^{s,p}} 
\end{align*}
We can use ~\eqref{basic elliptic} to obtain

\begin{align}\label{elliptic}
\|A\|_{\dot{W}^{s+2,p}} \lesssim  \|LA\|_{\dot{W}^{s,p}} + \|(L-L_0)A\|_{\dot{W}^{s,p}}
\end{align}
for every $s\in \R$  and for $1<p<\infty$. To prove ~$(i)$, set $s=-1$ above to get

\begin{align*}
\norm{A}_{\dot{W}^{1,p}} &\lesssim  \|LA\|_{\dot{W}^{-1,p}} + \|(L-L_0)A\|_{\dot{W}^{-1,p}}\\ \notag
\\  \notag
&\lesssim \norm{g^{-1}\, \p G}_{\dot{W}^{-1,p}} + \norm{b\,  \p A}_{\dot{W}^{-1,p}} \\ 
\\ \notag
&\quad+ \norm{c A}_{\dot{W}^{-1,p}} + \left\|(g^{-1}-g_0^{-1})\p^2A\right\|_{\dot{W}^{-1,p}}\notag
\end{align*}
We claim that 

\begin{align*}
\norm{g^{-1}\, \p G}_{\dot{W}^{-1,p}} \lesssim \norm{G}_{L^p}
\end{align*}
This follows from the dual estimate

\begin{align}\label{dual}
\norm{g^{-1} f}_{\dot{W}^{1,p'}}\lesssim \norm{f}_{\dot{W}^{1,p'}}
\end{align}
To prove ~\eqref{dual} observe that we have 

\begin{align*}
\norm{g^{-1} f}_{\dot{W}^{1,p'}}&\lesssim \norm{\p(g^{-1} f)}_{L^{p'}}\\
\\
&\lesssim \norm{(\p g^{-1}) f}_{L^{p'}} + \norm{ g^{-1} (\p f)}_{L^{p'}}\\
\\
&\lesssim \norm{ \p g^{-1}}_{L^4} \norm{f}_{L^r} + \norm{g^{-1}}_{L^{\infty}} \norm{\p f}_{L^{p'}}\\
\\
&\lesssim \norm{f}_{\dot{W}^{1,p'}}
\end{align*}
where the last inequality follows from ~\eqref{p g} and the Sobolev embedding $\dot{W}^{1,p'} \hookrightarrow L^r$ since we have $\frac{1}{r} = \frac{1}{p'} - \frac{1}{4}$. 
Next, we assert that 

\begin{align*}
\left\|(g^{-1}-g_0^{-1})\p^2A\right\|_{\dot{W}^{-1,p}} & \lesssim 
\ep \norm{\p^2 A}_{\dot{W}^{-1,p}}\lesssim \ep\norm{A}_{\dot{W}^{1,p}}
\end{align*}
Again, this follows from a duality argument. Observe that 

\begin{align*}
\|(g^{-1} -g_0^{-1})f \|_{\dot{W}^{1,p^{\prime}}} &\lesssim \| \p(g^{-1} -g_0^{-1}) f\|_{\dot{W}^{1, p^{\prime}}} + \| (g^{-1} -g_0^{-1}) \p f\|_{\dot{W}^{1, p^{\prime}}}\\
\\
&\lesssim \|\p g^{-1}\|_{L^4} \|f\|_{L^r} + \|(g^{-1} -g_0^{-1})\|_{L^{\infty}}\|\p f\|_{L^{p^{\prime}}}\\
\\
&\lesssim \ep\|f\|_{\dot{W}^{1, p^{\prime}}}
\end{align*}
where the last inequality is again due to \eqref{g}, \eqref{p g}, and Sobolev embedding since $\frac{1}{r} = \frac{1}{p'} - \frac{1}{4}$. 
To estimate $\|b\, \p A\|_{\dot{W}^{-1,p}}$, we use Sobolev embedding, H\"older's inequality and ~\eqref{b}. Indeed,

\begin{align*}
\norm{b\,  \p A}_{\dot{W}^{-1,p}} & \lesssim \|b\, \p A\|_{L^s}\\
\\
&\lesssim \|b\|_{L^4} \|\p A\|_{L^{p}} \\
\\
& \lesssim \ep \norm{A}_{\dot{W}^{1,p}}
\end{align*} 
where $\frac{1}{p}= \frac{1}{s}-\frac{1}{4}$. Finally, we show that

\begin{align*}
\norm{c A}_{\dot{W}^{-1,p}}\lesssim \ep \norm{A}_{\dot{W}^{1,p}}
\end{align*}
To see this, we again use Sobolev embedding and  ~\eqref{c} to obtain
\begin{align*}
\norm{c\,A}_{\dot{W}^{-1,p}} &\lesssim \|c\,A\|_{L^s}\\
\\
& \lesssim \|c\|_{L^2} \|A\|_{L^r}\\
\\
& \lesssim \ep \|A\|_{\dot{W}^{1,p}}
\end{align*}
with $\frac{1}{p} = \frac{1}{s} - \frac{1}{4}$, $\frac{1}{s} = \frac{1}{2} + \frac{1}{r}$, and $ \frac{1}{r}= \frac{1}{p} - \frac{1}{4}$.  
Putting this all together we  are able to conclude that 

\begin{align*}
\norm{A}_{\dot{W}^{1,p}} \lesssim \|G\|_{L^p} + \ep \norm{A}_{\dot{W}^{1,p}}
\end{align*}
For $\ep$ small enough, this implies ~$(i)$, since $G= F-A\wedge A$. 

\vspace{\baselineskip}

To prove ~$(ii)$ we set $s=0$ in ~\eqref{elliptic}, and use ~\eqref{g},  ~\eqref{b}, ~\eqref{c}, and Sobolev embedding to obtain

\begin{align*}
\norm{A}_{\dot{W}^{2,p}} &\lesssim \norm{g^{-1}\, \p G}_{L^p} + \norm{b\,  \p A}_{L^p}+ \norm{c A}_{L^p} + \left\|(g^{-1}-g_0^{-1})\p^2A\right\|_{L^p}\\
\\
&\lesssim \norm{g^{-1}}_{L^{\infty}} \|\p G\|_{L^p} + \|b\|_{L^4}\|\p A\|_{L^s} + \|c\|_{L^2}\|A\|_{L^r}\\
&\quad+ \norm{g^{-1} -g_0^{-1}}_{L^{\infty}} \|\p^2A\|_{L^p}\\
\\
&\lesssim \|G\|_{\dot{W}^{1,p}} + \ep \|A\|_{\dot{W}^{2,p}}
\end{align*}
where $\ds{\frac{1}{s}= \frac{1}{p} -\frac{1}{4}}$ and $\ds{\frac{1}{r} = \frac{1}{p} - \frac{2}{4}}$. This proves ~$(ii)$ as long as $\ep$ is small enough. 
\end{proof}

\vspace{\baselineskip}

With the elliptic estimates in Lemma ~\ref{elliptic estimates} we can prove Proposition ~\ref{main elliptic estimates} ~$(i)$, ~$(ii)$ and ~~$(iii)$.

\begin{proof}[Proof of Proposition ~\ref{main elliptic estimates} ~$(i)$] This will follow from Lemma ~\ref{elliptic estimates} ~$(i)$ with $p=2$, a contraction argument at one fixed time, and then a bootstrap argument to conclude the uniform-in-time estimates. We note that this argument also proves the existence of a unique Coulomb frame $e$ with the associated connection form $A$ having small $L^4$ norm. 

To carry out the contraction argument we fix a time $t_0$ and we set $X$ to  be the space

\begin{align*}
X:= \{A\in \dot{H}^1 \cap L^4\}
\end{align*}
with the norm 
\begin{align*}
\|A\|_{X} := \|A\|_{L^4} + \|A\|_{\dot{H}^1}
\end{align*}
Of course by Sobolev embedding we have $\|A\|_{X} \lesssim \|A\|_{\dot{H}^1}$. We set $X_{\ep_0}$ to be 

\begin{align*}
X_{\ep_0} := \{A\in X : \|A\|_{X} \le \ep_0\}
\end{align*}
Define a map $\Phi$ that associates to each $\ti{A} \in X_{\ep_0}$ the solution $A$ to the linear elliptic problem
\begin{align}\label{linear elliptic}
LA_{\ell} = g^{ij}\p_i(F_{j \ell} - [\ti{A}_j, \ti{A}_{\ell}])
\end{align}
The existence of such a solution follows easily by the method of continuity, the key estimate here being
\begin{align*}
\|A\|_{\dot{H}^1} \lesssim \|LA\|_{\dot{H}^{-1}}
\end{align*}
which was obtained in the course of proving Lemma ~\ref{elliptic estimates} ~$(i)$ with  $p=2$.  We will show that if $\ep_0$ and $\|du\|_{\dot{H}^1}$ are small enough, then $\Phi: X_{\ep_0} \to X_{\ep_0}$ and that $\Phi$ is a contraction mapping on this space. To see that $\Phi: X_{\ep_0} \to X_{\ep_0}$ we use  
Sobolev embedding and Lemma ~\ref{elliptic estimates} ~$(i)$ to obtain 

\begin{align*}
\|A\|_{X} \lesssim \|A\|_{\dot{H}^{1}} \lesssim \|[\ti{A},\ti{A}]\|_{L^2}+ \|F\|_{L^2}
\end{align*}
Recall that we can write $F_{\al \be} = R(u)(\p_{\al}u, \p_{\be}u)$ where $R$ is the Riemannian curvature tensor on $N$. Hence 

\begin{align*}
\|A\|_{X} &\lesssim  \|\ti{A}\|^2_{L^4}+ \|R\|_{L^{\infty}}\|du\|_{L^4}^2\\
\\
&\le  C_1\|\ti{A}\|^2_{X}+ C_2\|du\|_{\dot{H}^1}^2 \le \ep_0
\end{align*}
as long as $\ep_0$ and $\|du\|_{\dot{H}^1}$ are small enough. Next we show that $\Phi: X_{\ep_0} \to X_{\ep_0}$ is a contraction mapping. Let $\ti{A}^1, \ti{A}^2 \in X_{\ep_0}$ and let $A^1, A^2$ be the associated solutions to ~\eqref{linear elliptic}. Then $A^1-A^2$ is a solution to 

\begin{align*}
L(A_{\ell}^1-A_{\ell}^2) = g^{ij}\p_i([\ti{A}^1_j, \ti{A}^1_{\ell}] -[\ti{A}^2_j, \ti{A}^2_{\ell}])
\end{align*}
and hence we have estimates 

\begin{align*}
\|A^1-A^2\|_{X} \lesssim \|A^1- A^2\|_{\dot{H}^1}& \lesssim \|[\ti{A}^1, \ti{A}^1] -[\ti{A}^2, \ti{A}^2]\|_{L^2}\\
\\
&\lesssim \|\ti{A}^1-\ti{A}^2\|_{L^4}\|\ti{A}^1\|_{L^4} + \|\ti{A}^1-\ti{A}^2\|_{L^4}\|\ti{A}^2\|_{L^4}\\
\\
&\lesssim \ep_0 \|\ti{A}^1 -\ti{A}^2\|_{X}
\end{align*}
which proves that $\Phi$ is a contraction. Hence $\Phi$ has a unique fixed point $A=A({t_0})$ which solves ~\eqref{L} such that 
\begin{align*}
\|A({t_0})\|_{L^4} \lesssim \ep_0
\end{align*}
To obtain this estimate for all times $t$ with a uniform constant we again use Lemma ~\ref{elliptic estimates} ~~$(i)$ with $p=2$ to obtain for any time

\begin{align*}
\|A\|_{L^4} \lesssim \|A\|_{\dot{H}^1} &\lesssim \|[A,A]\|_{L^2} + \|F\|_{L^2}\\
\\
&\lesssim \|A\|_{L^4}^2 + \|du\|_{\dot{H}^1}^2
\end{align*}
As long as $\|du\|_{\dot{H}^1}$ is small enough we can use a bootstrap argument, with $\|A(t_0)\|\lesssim \ep_0$ as our base case, to absorb the $\|A\|_{L^4}^2$ term on the left hand side and obtain 

\begin{align*}
\|A\|_{L^4} \lesssim \ep_0
\end{align*}
for all times $t$, as desired. 
\end{proof}

\vspace{\baselineskip}

  \begin{proof}[Proof of Propostion ~\ref{main elliptic estimates} ~$(ii)$ and ~$(iii)$.]  To prove ~$(ii)$ we set  $p=\frac{8}{3}$ in Lemma ~\ref{elliptic estimates} ~$(i)$, giving 
  
 \begin{align*}
\|A\|_{\dot{W}^{1,\frac{8}{3}}} \lesssim \|[A, A]\|_{L^{\frac{8}{3}}}  + \|F\|_{L^{\frac{8}{3}}}
\end{align*} 
  First we claim that  $\ds{\|[A, A]\|_{L^{\frac{8}{3}}}  \lesssim \ep \|A\|_{\dot{W}^{1,\frac{8}{3}}}}$ and this term can thus be absorbed on the left-hand side above. Indeed,

\begin{align*}
\|[A, A]\|_{L^{\frac{8}{3}}} &\lesssim \|A\|_{L^4} \|\p A\|_{L^{8}}\\
\\
&\lesssim \ep_0 \|A\|_{\dot{W}^{1,\frac{8}{3}}}
\end{align*}
where the last inequality follows from Sobolev embedding and the previous estimate $\ds{\|A\|_{L^4} \lesssim \ep}$. Next we recall that $F= R(u)(du, du)$ and hence we have 

\begin{align*}
\|F\|_{L^{\frac{8}{3}}} \lesssim \|du\|_{L^8}\|du\|_{L^4} \lesssim \|du\|_{L^8}\|du\|_{\dot{H}^1}
\end{align*}
Putting this together implies gives 

\begin{align*}
\|A\|_{\dot{W}^{1,\frac{8}{3}}} \lesssim \|du\|_{L^8} \|du\|_{\dot{H}^1}
\end{align*}
as long as $\ep$ is small enough.
 
 To prove ~~$(iii)$ we proceed in a similar fashion. We set $p=\frac{8}{5}$ in Lemma ~\eqref{elliptic estimates} ~$(ii)$. This gives

\begin{align*}
\|A\|_{\dot{W}^{2,\frac{8}{5}}} \lesssim \|[A, A]\|_{\dot{W}^{1,\frac{8}{5}}}  + \|F\|_{\dot{W}^{1,\frac{8}{5}}} 
\end{align*}
First we observe that $\ds{\|[A, A]\|_{\dot{W}^{1,\frac{8}{5}}} \lesssim \ep \|A\|_{\dot{W}^{2,\frac{8}{5}}}}$ and this term can thus be absorbed on the left-hand side above. In fact, 

\begin{align*}
\|[A, A]\|_{\dot{W}^{1,\frac{8}{5}}} &\lesssim \|A \p A\|_{L^{\frac{8}{5}}}\\
\\
&\lesssim \|A\|_{L^4} \|\p A\|_{L^{\frac{8}{3}}}\\
\\
&\lesssim \ep_0 \|A\|_{\dot{W}^{2,\frac{8}{5}}}
\end{align*}
where the last inequality follows from Sobolev embedding and the previous estimate $\ds{\|A\|_{L^4} \lesssim \ep}$. Next observe that 

\begin{align*}
\p_{\ga} F_{\al \be} = (\p R(u))(\p_{\ga} u, \p_{\al}u, \p_{\be} u) + R(u)( \p_{\ga}\p_{\al}u, \p_{\be} u) + R(u)( \p_{\al}u, \p_{\ga}\p_{\be} u)
\end{align*}
Hence by Sobolev embedding and the assumption that $\|du\|_{\dot{H}^1} \lesssim \ep_0$,

\begin{align*} 
\|\p F\|_{L^{\frac{8}{5}}} &\lesssim \|\p R(u)\|_{L^{\infty}} \|du\|_{L^4} \|du\|_{L^4} \|du\|_{L^8} + \|R\|_{L^{\infty}}\|\p du\|_{L^2} \|du\|_{L^{8}} \\
\\
&\lesssim \|du\|_{\dot{H}^1} \|du\|_{L^8}
\end{align*}
Putting this all together we have for small enough $\ep_0$ that

\begin{align*}
\|A\|_{\dot{W}^{2,\frac{8}{5}}} \lesssim  \|du\|_{\dot{H}^1} \|du\|_{L^8} \lesssim \|du\|_{L^8}
\end{align*}
establishing ~~$(iii)$. 
\end{proof}

\vspace{\baselineskip}

To prove the pointwise estimates for the connection form in Propostion ~\ref{main elliptic estimates} ~$(iv)$, we will need a few facts about Lorentz Spaces, $L^{p,r}(\R^4)$, including Sobolev embedding for Lorentz spaces and  the Calderon-Zygmund theorem for  Lorentz spaces. These facts, along with a few others, are  reviewed  in the appendix,  see Section ~\ref{Lorentz Spaces}.

Now, again let $L_0= g_0^{ij} \p_i \p_j$ be the flat Laplacian on $\R^4$  and let $K=L_0^{-1}$ be convolution with $k(x) = \ds{\frac{c}{\abs{x}^2}}$, the fundamental solution for $L_0$ in $\R^4$.  We can then write 

\begin{align*}
A= KLA + K(L_0-L)A
\end{align*}

In order to prove Proposition ~\ref{main elliptic estimates} ~$(iv)$, we will need the following preliminary estimates for  $\p A$. 

\begin{lem}\label{grad A} Let $A$ denote the connection form associated to the Coulomb frame as in Proposition \ref{main elliptic estimates}. Then, the following estimates hold uniformly in time: 
\begin{align*}
\|\p A\|_{L^{4,1}}  \lesssim \|du\|^2_{L^{8,2}} + \ep\|A\|_{L^{\infty}}
\end{align*}
\end{lem}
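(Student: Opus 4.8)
The plan is to start from the splitting
\[
\p A = \p K L A + \p K (L_0 - L) A
\]
and estimate each piece in $L^{4,1}(\R^4)$. For the first term, $\p K$ is a Calderón–Zygmund type operator: since $K$ is convolution with $c|x|^{-2}$, the kernel of $\p K$ is homogeneous of degree $-3$, and by the Calderón–Zygmund theorem for Lorentz spaces (reviewed in the appendix) it maps $L^{4/3,1} \to L^{4,1}$. Recalling that $LA_\ell = g^{ij}\p_j G_{i\ell}$ with $G = F - A\wedge A$, I would write $\p K L A = \p K \p (g^{-1} G)$, so that $\p K \p$ is a CZ operator of order $0$ and hence bounded $L^{4,1}\to L^{4,1}$. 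Thus this term is controlled by $\|g^{-1}G\|_{L^{4,1}} \lesssim \|G\|_{L^{4,1}} \lesssim \|F\|_{L^{4,1}} + \|A\wedge A\|_{L^{4,1}}$; using $F = R(u)(du,du)$ and the bilinear Lorentz Hölder inequality $\|fg\|_{L^{4,1}} \lesssim \|f\|_{L^{8,2}}\|g\|_{L^{8,2}}$ gives $\|F\|_{L^{4,1}}\lesssim \|du\|_{L^{8,2}}^2$, while $\|A\wedge A\|_{L^{4,1}} \lesssim \|A\|_{L^{8,2}}\|A\|_{L^{8,2}}$, which I will further bound using $\|A\|_{L^{8,2}} \lesssim \|A\|_{L^\infty}^{1/2}\|A\|_{L^4}^{1/2}$ (interpolation between $L^4$ and $L^\infty$ in the Lorentz scale) together with Proposition \ref{main elliptic estimates}(i), so that $\|A\wedge A\|_{L^{4,1}} \lesssim \ep_0 \|A\|_{L^\infty}$ — giving the $\ep\|A\|_{L^\infty}$ contribution. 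Alternatively one can split $\|A\wedge A\|_{L^{4,1}} \lesssim \|A\|_{L^\infty}\|A\|_{L^{4,1}}$ and note $\|A\|_{L^{4,1}}\lesssim \ep_0$ from the earlier estimates; either route works.

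For the second, perturbative term $\p K(L_0 - L)A$, note that $L_0 - L = (g_0^{ij} - g^{ij})\p_i\p_j - b^j\p_j - c$ by \eqref{def L}. Here $\p K$ has a kernel homogeneous of degree $-3$, so it maps $L^{4/3,1}\to L^{4,1}$; I would therefore bound $\|(L_0-L)A\|_{L^{4/3,1}}$ by Hölder in Lorentz spaces. The term $(g_0^{-1}-g^{-1})\p^2 A$ obeys $\|(g_0^{-1}-g^{-1})\p^2 A\|_{L^{4/3,1}} \lesssim \|g_0^{-1}-g^{-1}\|_{L^4}\cdot\|\p^2 A\|_{L^{2,1}}$ — but wait, to keep the $\ep$ smallness I would rather pair the $L^\infty$ bound $\|g_0^{-1}-g^{-1}\|_{L^\infty}\lesssim \ep$ from \eqref{g} with $\|\p^2 A\|_{L^{4/3,1}}$, which unfortunately is not an available norm; so instead I use $\|g^{-1}-g_0^{-1}\|_{L^{4,1}}$ — not assumed either. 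The cleanest option is to keep this $\p^2 A$ term by absorption: since $\p K(g_0^{-1}-g^{-1})\p^2 = \p K\p\,\cdot\,(g_0^{-1}-g^{-1})\p + (\text{commutator with }\p g)$, and $\p K\p$ is CZ of order $0$, one gets $\lesssim \|g^{-1}-g_0^{-1}\|_{L^\infty}\|\p A\|_{L^{4,1}} + \|\p g\|_{L^{4,1}}\|\p A\|_{L^\infty} \lesssim \ep\|\p A\|_{L^{4,1}} + \ep\|\p A\|_{L^\infty}$; and $\|\p A\|_{L^\infty}$ can be traded for $\|A\|_{L^\infty}$ after another bound, or simply incorporated. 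Then $b^j\p_j A$ gives $\|b\,\p A\|_{L^{4/3,1}} \lesssim \|b\|_{L^{4,1}}\|\p A\|_{L^2} \lesssim \ep\|du\|_{\dot H^1}\lesssim \ep\ep_0$ using \eqref{b} and Hölder $L^{4,1}\cdot L^{2,\infty}\hookrightarrow L^{4/3,1}$ together with $\|\p A\|_{L^2}\lesssim \|A\|_{\dot H^1}\lesssim \ep_0$; and $cA$ gives $\|cA\|_{L^{4/3,1}}\lesssim \|c\|_{L^{2,1}}\|A\|_{L^4}\lesssim \ep\ep_0$ by \eqref{c}.

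Collecting everything, one arrives at
\[
\|\p A\|_{L^{4,1}} \lesssim \|du\|_{L^{8,2}}^2 + \ep_0\|A\|_{L^\infty} + \ep\|\p A\|_{L^{4,1}} + \ep\ep_0,
\]
and absorbing the $\ep\|\p A\|_{L^{4,1}}$ term on the left (valid once $\ep$ is small, and finiteness of $\|\p A\|_{L^{4,1}}$ is known a priori from the higher-regularity theory) yields $\|\p A\|_{L^{4,1}} \lesssim \|du\|_{L^{8,2}}^2 + \ep\|A\|_{L^\infty}$, after relabelling $\ep_0$-powers and constants. The main obstacle I anticipate is organizing the perturbative term so that every occurrence of $(g^{-1}-g_0^{-1})\p^2 A$ or $\p g\,\p A$ is handled with a genuinely small constant without needing a norm of $A$ that has not been established — this forces one to route second derivatives through the order-zero operator $\p K\p$ and to use the Leibniz rule carefully, rather than crudely putting $\p^2 A$ in $L^{2,1}$. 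The other delicate point is the Lorentz Hölder bookkeeping: one must check at each step that the Lorentz exponents genuinely satisfy the required additivity (e.g. $L^{8,2}\cdot L^{8,2}\hookrightarrow L^{4,1}$, $L^{4,1}\cdot L^{2,\infty}\hookrightarrow L^{4/3,1}$), which are exactly the embeddings recorded in the appendix.
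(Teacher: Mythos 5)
Your strategy matches the paper's: split $\p A$ into $\p KLA + \p K(L_0-L)A$, handle the main piece with the Calder\'on--Zygmund theorem in Lorentz spaces, handle the error pieces with Young's convolution inequality, and absorb the resulting $\ep\|\p A\|_{L^{4,1}}$. Your bound $\|A\wedge A\|_{L^{4,1}} \lesssim \ep_0\|A\|_{L^\infty}$ via interpolation $\|A\|_{L^{8,2}}\lesssim\|A\|_{L^\infty}^{1/2}\|A\|_{L^4}^{1/2}$ is a legitimate alternative to the paper's route through Proposition \ref{main elliptic estimates}~(iii) (which instead shows $\|A\|_{L^{8,2}}\lesssim\|du\|_{L^{8,2}}$), and both produce admissible terms.

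There are, however, two genuine gaps. First, your calibration of the operator $\p K$ is wrong. Its kernel $\p k \sim |x|^{-3}$ is homogeneous of degree $-3$ in $\R^4$, so $\p K$ is a fractional integral of order $-1$, not a Calder\'on--Zygmund (order $0$) operator; by Young's inequality for Lorentz spaces with $\|\p k\|_{L^{4/3,\infty}}$, it maps $L^{2,1}\to L^{4,1}$, not $L^{4/3,1}\to L^{4,1}$. Consequently every perturbative argument you attempted to place in $L^{4/3,1}$ must instead be placed in $L^{2,1}$, and this changes the H\"older pairings: for instance $\|b\,\p A\|_{L^{2,1}}\lesssim\|b\|_{L^{4,\infty}}\|\p A\|_{L^{4,1}}\lesssim\ep\|\p A\|_{L^{4,1}}$ and $\|cA\|_{L^{2,1}}\lesssim\|c\|_{L^{2,1}}\|A\|_{L^\infty}\lesssim\ep\|A\|_{L^\infty}$ (your pairing $\|c\|_{L^{2,1}}\|A\|_{L^4}$ lands in $L^{4/3,1}$, the wrong space). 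The same miscalibration means your reduction $\p KLA = \p K\p(g^{-1}G)$ silently omits the Leibniz term $-K\bigl((\p_j g^{ij})G_{i\ell}\bigr)$; that piece is estimated by Young with $\|\p g\|_{L^{4,\infty}}\|G\|_{L^{4,1}}\lesssim\|du\|_{L^{8,2}}^2$.

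Second, your treatment of $(g_0^{-1}-g^{-1})\p^2A$ does not close. Your commutator rewriting produces a term $\|\p g\|_{L^{4,1}}\|\p A\|_{L^\infty}$, and there is no available bound on $\|\p A\|_{L^\infty}$; trading it for $\|A\|_{L^\infty}$ is not justified. The correct move is the one the paper takes: integrate by parts inside the convolution before differentiating, writing $k*(\ep^{ij}\p_i\p_j A) = (\p_i k)*(\ep^{ij}\p_j A) - k*((\p_i\ep^{ij})\p_j A)$ so that only a single derivative ever falls on $A$. Differentiating then gives a genuine CZ piece $(\p_\al\p_i k)*(\ep^{ij}\p_j A)$ bounded by $\ep\|\p A\|_{L^{4,1}}$, and a Young piece $(\p_\al k)*((\p_i\ep^{ij})\p_j A)$ bounded by $\|\p_\al k\|_{L^{4/3,\infty}}\,\|\p g\|_{L^{4,\infty}}\,\|\p A\|_{L^{4,1}}\lesssim\ep\|\p A\|_{L^{4,1}}$. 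Both are absorbable, and neither requires $\|\p A\|_{L^\infty}$.
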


\begin{proof}
With $K$, $L$ and $L_0$ as above,  write

\begin{align*}
A = KLA + K(L_0-L) A
\end{align*}
Then

\begin{align*}
 KLA&= k\ast g^{ij}\p_i G_j \\
 &= (\p_ik)\ast g^{ij} G_j - k\ast (\p_ig^{ij}) G_j
 \end{align*}
Then, formally, we have 

\begin{align}
\p_{\al}( KLA) = (\p_{\al}\p_ik)\ast g^{ij} G_j -  (\p_{\al}k)\ast (\p_ig^{ij}) G_j\label{dkla}
\end{align}
Since, $\p_{\al} \p_i k $ is a Calderon-Zygmund kernel, we can use the Calderon-Zygmund theorem for Lorentz spaces, see Theorem ~\ref{CZ} below, and H\"{o}lder's inequality for Lorentz spaces, see Lemma ~\ref{Lor 2} ~$(i)$ below,  to obtain

\begin{align*}
\| (\p_{\al}\p_ik)\ast g^{ij} G_j\|_{L^{4,1}} &\lesssim \|g^{ij} G_j\|_{L^{4,1}} \\
\\
& \lesssim \|[A,A]\|_{L^{4,1}} + \|F\|_{L^{4,1}}\\
\\
&\lesssim \|A\|^2_{L^{8,2}} + \|du\|_{L^{8,2}}^2
\end{align*}
Using the fact that $L^{p,r} \subset L^{p,s}$ for $r<s$,  Sobolev embedding for Lorentz spaces, see Lemma ~\ref{Sob embedding Lorentz} below, and Proposition ~\ref{main elliptic estimates} ~~$(iii)$, we have 
 
 \begin{align*}
 \|A\|_{L^{8,2}} \lesssim \|A\|_{L^{8, \frac{8}{5}}} \lesssim \|A\|_{\dot{W}^{2,\frac{8}{5}}} \lesssim \|du\|_{L^{8}}\|du\|_{\dot{H}^1}
 \end{align*}
 Then using the bootstrap assumption that $\ds{\|du\|_{\dot{H}^1} \ll1}$ we can conclude that 
 
 \begin{align}\label{A L^{8,2}}
 \|A\|_{L^{8,2}} \lesssim \|du\|_{L^8} \lesssim \|du\|_{L^{8,2}}
 \end{align}
 Inserting this estimate above we can conclude that 
 
 \begin{align*}
 \| (\p_{\al}\p_ik)\ast g^{ij} G_j\|_{L^{4,1}}\lesssim \|du\|_{L^{8,2}}^2
 \end{align*}
Next, we can use Young's inequality for Lorentz spaces, see Lemma ~\ref{Lor 2} ~$(ii)$ below, to show

\begin{align*}
\|(\p_{\al}k)\ast (\p_ig^{ij}) G_j\|_{L^{4,1}} &\lesssim  \|\p_{\al}k\|_{L^{\frac{4}{3}, \infty}} \|\p_i g^{ij} G_j\|_{L^{2,1}}\\
\\
&\lesssim \|\p_i g^{ij} \|_{L^{4, \infty}} \|G_j\|_{L^{4,1}} \\
\\
&\lesssim \|A^2\|_{L^{4,1}} + \|F\|_{L^{4,1}}\\
\\
&\lesssim \|du\|_{L^{8,2}}^2
\end{align*}
where above we have used ~\eqref{p g}. 
Now, to deal with the error term ~$K(L_0-L)A$, write $(L_0-L) A = \ep^{ij} \p_i \p_j A- b^{j}\p_jA -  cA$ where $\ep^{ij}(x) = g_0^{ij}(x) - g^{ij}(x)$.  Then we have 

\begin{align*}
K(L_0-L) A&= k \ast \ep^{ij} \p_i \p_j A -  k\ast b^{j}\p_jA -k\ast cA\\
&= (\p_i k) \ast \ep^{ij} \p_j A - k \ast (\p_{i} \ep^{ij}) \p_j A-  k\ast b^{j}\p_jA - k\ast cA
\end{align*}
Hence, formally we have 

\begin{align}\label{derror}
\p_{\al}(K(L_0-L) A)&=  (\p_{\al}\p_i k) \ast \ep^{ij} \p_j A - (\p_{\al}k) \ast (\p_{i} \ep^{ij}) \p_j A \\ \notag
&\quad-( \p_{\al}k)\ast b^{j}\p_jA - (\p_{\al}k)\ast cA
\end{align}

And as before, we use the Calderon-Zygmund theorem on the first term on the right-hand side above to get

\begin{align*}
\|(\p_{\al}\p_i k) \ast \ep^{ij} \p_j A\|_{L^{4,1}} \lesssim \sum_i \|  \ep^{ij} \p_j A\|_{L^{4,1}} \lesssim \ep \|\p A\|_{L^{4,1}}
\end{align*}
We estimate the other three terms on the right-hand side of ~\eqref{derror} using Young's inequality for Lorentz spaces as follows 

\begin{align*}
\|(\p_{\al}k) \ast (\p_{i} \ep^{ij}) \p_j A\|_{L^{4,1}} &\lesssim \|\p_{\al} k \|_{L^{\frac{4}{3} , \infty}} \|(\p_{i} \ep^{ij}) \p_j A\|_{L^{2,1}}\\
\\
&\lesssim \|\p_i \ep^{ij}\|_{L^{4,\infty}}\|\p_j A \|_{L^{4,1}}\\
\\
&\lesssim \ep\|\p A\|_{L^{4,1}}
\end{align*}
the last inequality following from the fact that $\p_i \ep^{ij}= \p_i g^{ij} \in L^{4,\infty}$. We also have

\begin{align*}
\|\p_{\al}k \ast cA\|_{L^{4,1}}& \lesssim \|\p_{\al} k\|_{L^{\frac{4}{3}, \infty}} \|cA\|_{L^{2,1}}\\  
\\
&\lesssim \|c\|_{L^{2,1}}\|A\|_{L^{\infty}}\\
\\
&\lesssim \ep \|A\|_{L^{\infty}}
\end{align*}
Above we have used the fact that $\abs{\p_{\al}k} \sim \frac{1}{\abs{x}^3} \in L^{\frac{4}{3}, \infty}(\R^4)$ and $\|c\|_{L^{2,1}} \lesssim \ep$, see Lemma ~\ref{Lor 1}, ~~~$(iii)$ and ~\eqref{c}.  And lastly,

\begin{align*}
\|( \p_{\al}k)\ast b^{j}\p_jA\|_{L^{4,1}} &\lesssim \|\p_{\al} k \|_{L^{\frac{4}{3},\infty}}\|b^j \p_j A\|_{L^{2,1}}\\
\\
& \lesssim \|b\|_{L^{4}{\infty}}\|\p A\|_{L^{4,1}} \\
\\
&\lesssim \ep \|\p A\|_{L^{4,1}}
\end{align*}
which follows by \eqref{b}. Putting this all together establishes that 

\begin{align*}
\|\p A\|_{L^{4,1}} \lesssim \|du\|_{L^{8,2}}^2 + \ep \|A\|_{L^{\infty}} + \ep\| \p A \|_{L^{4,1}}
\end{align*}
which, for small $\ep$, implies that 

\begin{align*}
\|\p A\|_{L^{4,1}} \lesssim \|du\|_{L^{8,2}}^2 + \ep \|A\|_{L^{\infty}} 
\end{align*}
as desired. 
\end{proof}

\vspace{\baselineskip}

Now we are able to prove Proposition ~\eqref{main elliptic estimates} ~$(iv)$.

\begin{proof}[Proof of Proposition ~\ref{main elliptic estimates} ~$(iv)$]
Since $A= KLA + K(L_0-L)A$, it suffices to show that for every $t$ the following two estimates hold:

\begin{align}
\|KLA\|_{L^{\infty}} &\lesssim \|du\|_{L^{8,2}}^2 \label{KLA}\\
\notag\\
\|K(L_0-L)A\|_{L^{\infty}} &\lesssim \|du\|_{L^{8,2}}^2 + \ep \|A\|_{L^{\infty}}\label{KerrorA}
\end{align}
Observe that we can write

\begin{align*}
 KLA&= k\ast g^{ij}\p_i G_j \\
 &= (\p_i k)\ast g^{ij} G_j - k\ast (\p_i g^{ij}) G_j
 \end{align*}
By Lemma ~\ref{Lor 2} ~~$(iii)$, we have that 

\begin{align*}
\|(\p_i k)\ast g^{ij} G_j\|_{L^{\infty}} &\lesssim \|\p_i k\|_{L^{\frac{4}{3}, \infty}} \|g^{ij} G_j\|_{L^{4,1}}\\
\\
&\lesssim \|[A,A]\|_{L^{4,1}} + \|F\|_{L^{4,1}} \\
\\
&\lesssim \|A\|_{L^{8,2}}^2 + \|F\|_{L^{4,1}}^2\\
\\
&\lesssim \|du\|_{L^{8,2}}^2
\end{align*}
where we have used \eqref{A L^{8,2}} in the last inequality. Similarly,

\begin{align*}
\|k \ast (\p_i g^{ij}) G_j\|_{L^{\infty}} &\lesssim \|k\|_{L^{2,\infty}} \|\p_i g^{ij} G_j\|_{L^{2,1}}\\
\\
&\lesssim \|\p_i g^{ij}\|_{L^{4, \infty}} \|G_j\|_{L^{4,1}}\\
\\
&\lesssim \|G\|_{L^{4,1}}\\
\\
&\lesssim \|du\|_{L^{8,2}}
\end{align*}
This proves ~\eqref{KLA}. To establish the error estimate ~\eqref{KerrorA} we again write 

\begin{align} \notag
K(L_0-L) A &= k\ast \ep^{ij} \p_i \p_j A -  k\ast b^{j}\p_jA - k\ast cA \\ \notag
\\ \notag
&= \p_i k\ast \ep^{ij} \p_j A - k\ast(\p_i \ep^{ij}) \p_j A- (\p_j k)\ast b^j A + k\ast (\p_j b^{j})A\\\notag
&\quad- k\ast cA \\\notag
\\ \label{KL-L_0A}
&=  \p_i k\ast \ep^{ij} \p_j A - \p_j k \ast (\p_i \ep^{ij})  A + k \ast (\p_j \p_i \ep^{ij}) A- (\p_j k)\ast b^j A\\\notag
&\quad + k\ast (\p_j b^{j})A - k\ast cA 
\end{align}
 where as before $\ep^{ij} = g_0^{ij} - g^{ij}$. Now, we can use Lemma ~\ref{grad A} to control the first term on the right above
 
 \begin{align*}
 \|\p_i  k \ast \ep^{ij} \p_j A \|_{L^{\infty}} &\lesssim \|\p_i k \|_{L^{\frac{4}{3}}, \infty} \|\ep^{ij} \p_j A\|_{L^{4,1}}\\
 \\
 &\lesssim \ep \|\p A\|_{L^{4,1}}\\
 \\
 &\lesssim \|du\|_{L^{8,2}}^2 + \ep \|A\|_{L^{\infty}}
 \end{align*}
 The other terms in \eqref{KL-L_0A} are estimated as follows:
 
 \begin{align*}
 \|\p_j k \ast (\p_i \ep^{ij})A\|_{L^{\infty}}& \lesssim \|\p_j k\|_{L^{\frac{4}{3}, \infty}} \|(\p_i \ep^{ij}) A\|_{L^{4,1}} \\
 \\
& \lesssim \sum_{j} \|\p_i g^{ij}\|_{L^{4,1}} \|A\|_{L^{\infty}} \\
\\
&\lesssim \ep \|A\|_{L^{\infty}}
 \end{align*}
We also have 
\begin{align*}
 \|k \ast (\p_i \p_j\ep^{ij})\|_{L^{\infty}}& \lesssim \| k\|_{L^{2, \infty}} \|(\p_j \p_i \ep^{ij}) A\|_{L^{2,1}} \\
 \\
& \lesssim \|(\p_j\p_i \ep^{ij})\|_{L^{2,1}} \|A\|_{L^{\infty}} \\
\\
&\lesssim \ep \|A\|_{L^{\infty}}
 \end{align*}
The remaining terms are handled exactly in the same manner as these last two, using \eqref{b}, \eqref{p b} and \eqref{c} as needed.  This proves  ~\eqref{KerrorA}. Finally, putting everything together, we have 

\begin{align*}
\|A\|_{L^{\infty}} \lesssim \|KLA\|_{L^{\infty}} + \|K(L_0-L) A\|_{L^{\infty}} \lesssim \|du\|_{L^{8,2}}^2 + \ep\|A\|_{L^{\infty}}
\end{align*}
which, for $\ep$ small enough, gives 

\begin{align*}
\|A\|_{L^{\infty}} \lesssim \|du\|_{L^{8,2}}^2
\end{align*}
as claimed.
\end{proof}

\vspace{\baselineskip}

\subsection{Equivalence of Norms}\label{Equivalence of Norms} In this section we again set $(M, g) = (\R^4, g)$ with $g$ as in \eqref{g}-\eqref{p^k g}. Now that we have settled Proposition ~\ref{main elliptic estimates}, we can show that in the case that $e$ is the Coulomb frame, the extrinsic  $\dot{H}^s_e$ norms of $du$ are equivalent to the intrinsic $\dot{H}^s_i$ norms of $q= q^a e_a$ where $q$ is defined, as in ~\eqref{def of q}, by 
\begin{align*}
du= q^{a}e_a
\end{align*}

In the appendix, Section \ref{sobolev spaces}, we show using ~\eqref{g}-\eqref{p^k g}, that ~$\dot{H}^s_e((M,g);N)$ is equivalent to $\dot{H}_e^s((\R^4,g_0);N)$ and  that $\dot{H}^s_i((M,g);N)$ is equivalent to $\dot{H}_i^s((\R^4,g_0); N)$. Therefore, it suffices to ignore the perturbed metric $g$ on ~$\R^4$ and show that 

\begin{align}\label{H du =H q}
\|du\|_{\dot{H}_e^s((\R^4,g_0); N)} \simeq \|q\|_{\dot{H}_c^s((\R^4,g_0);N)}
\end{align}
This will follow from Proposition ~\ref{main elliptic estimates}.  We proceed exactly as in ~\cite[Section 4.3]{Shat-Stru WM}. We reproduce their argument here. 
For each $t$, since $e$ is an orthonormal frame,  we have 

\begin{align*}
\abs{du}^2 = \abs{q}^2 =\sum_{\al =0}^{4} \abs{q_{\al}}^2
\end{align*}
This implies that for $1\le p\le \infty$ that the $L^p$ norm of $du$ is well defined and independent of the choice of frame and coincides with the ``extrinsic'' $L^p$ norm of $du$.   However, this ``gauge'' independence is in general lost when we consider norms of higher derivatives of $du$ as the connection form $A$ appears when relating the intrinsic and extrinsic representations, and $A$, in general, cannot be controlled. In the case of the Coulomb frame, we can use the smallness provided by Proposition ~\ref{main elliptic estimates} to prove the desired equivalence of Sobolev norms.  To see this,  let $\psi$ be a section of $u^*TN$  whose components in terms of the Coulomb frame $e$ are given by

\begin{align}\label{psi=Q}
\psi= Q^a e_a= Qe
\end{align}
By the previous discussion we have $\|\psi\|_{L^2} = \|Q\|_{L^2}$.  Recall that we can represent covariant derivatives of $\psi$ in terms of the extrinsic partial derivatives of $\psi$ and the second fundamental form  by
 
 \begin{align}\label{Gauss}
 \p_k \psi = D_k \psi + S(u)(\p_k u, \psi)
 \end{align}
 Using the representation \eqref{psi=Q} we also have 

 \begin{align}\label{DQ}
 D_{k} \psi = (\p_k Q + AQ)e
 \end{align}
Combining \eqref{Gauss} and \eqref{DQ}, we obtain

\begin{align*}
 \p_k \psi= \p_k Q e + AQe + B(u)(\p_ku, Qe)
 \end{align*}
 We can then use Proposition ~\ref{main elliptic estimates} ~$(i)$, Sobolev embedding and the boundedness of the second fundamental form to obtain
 
 \begin{align}\label{norm equiv}
 \abs{ \|\p \psi\|_{L^2} -\|\p Q\|_{L^2}} &\lesssim \|AQ\|_{L^2} + \|duQ\|_{L^2} \\ \notag
 &\lesssim (\|A\|_{L^4} + \|du\|_{L^4}) \|Q\|_{L^4}\\ \notag
 &\lesssim \ep \|\p Q\|_{L^2}
 \end{align}
 This proves equivalence of the $H^1$ norms of $Q$ and $\psi$. Interpolation then provides equivalence for the $H^s$ norms for all $0\le s\le 1$. To conclude the equivalence of all the $H^s$ norms of $q$ and $du$, we apply the above argument to $\psi=\n^{\ell} du$ for all $\ell\in \N$. 
 
 Note that a similar argument also proves the equivalence of the $H^s$ norms of $\psi$ if we instead used covariant derivatives on $u^*TN$. That is, we can also show that 
 
 \begin{align}\label{covariant sobolev}
 \|DQ\|_{L^2} \simeq \|\p Q\|_{L^2} = \|Q\|_{\dot{H}^1_i(\R^4;N)} \simeq \|Q\|_{\dot{H}^1_i(M;N)}
 \end{align}
 We will use \eqref{covariant sobolev} in Section \ref{Higher Regularity} when we prove that higher regularity of wave maps is preserved by the evolution.

\vspace{\baselineskip}

\section{Wave Equation for $du$} \label{wave equation for du}
In this section show that for any Riemannian manifold $(M,g)$, if ~$u: \R \times M \to N$ is a smooth wave map, then we can derive wave equations of $1$-forms for $du$. The wave equations of $1$-forms imply  a system of variable coefficient wave equations for the components of $du$. We emphasize that the content of this section holds for any Riemannian manifold $(M,g)$ and not just the special case $(M,g)= (\R^4, g)$ with $g$ as in ~\eqref{g}--\eqref{p^k g}. 

We begin by expressing $du\in\Gamma(T^*\ti{M} \otimes u^*TN)$ in terms of the Coulomb frame $e$ as in Proposition ~\ref{main elliptic estimates}, by  finding $u^*TN$-valued one-forms $q=q_{\al} dx^{\al}$ so that

\begin{align}\label{du=q}
 du = q^{a}e_a
 \end{align}
Here $q^a_{\al} = u^*h\left(\p_{\al}u, e_a\right)$. Assuming that $u$ is a wave map, we derive a wave equation of $1$-forms for $q$. In what follows we let $\Box= d \de + \de d$ denote the Hodge Laplacian on $p$-forms over $\ti{M}= \R\times M$,  where   $d$ is the exterior derivative on $\ti{M}$ and $\de$ is the adjoint to $d$. 
 
\begin{lem}\label{delta q} Let $u: (\tilde{M}, \eta) \to (N, h)$ be a smooth wave map. And let $q=du$ be the representation of $du$ in the Coulomb frame $e$ as in \eqref{du=q}. Then we have
$\de \, q^c = A^c_{a, \al}\, \eta^{\al \be}\, q^a_\be  $.
\end{lem}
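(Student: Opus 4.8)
The plan is to compute $\de q^c$ directly from the definition of the exterior co-differential on $1$-forms over $\ti M$, and then use the wave map equation to identify the result. First I would recall that in local coordinates $\de q^c = -\eta^{\al\be}(\n_\al q^c)_\be$, where $\n$ here denotes the Levi-Civita connection on $\ti M$ acting on the $\R$-valued components $q^c$; equivalently $\de q^c = -\frac{1}{\sqrt{\abs\eta}}\p_\al(\sqrt{\abs\eta}\,\eta^{\al\be} q^c_\be)$. The key observation is that $q^c_\al = u^*h(\p_\al u, e_c)$, so applying the pull-back covariant derivative $D$ and using that $D$ is compatible with $u^*h$ gives
\begin{align*}
\p_\al\bigl(u^*h(\p_\be u, e_c)\bigr) = u^*h(D_\al \p_\be u, e_c) + u^*h(\p_\be u, D_\al e_c).
\end{align*}
The second term produces the connection form: $D_\al e_c = A^b_{c,\al} e_b$, so $u^*h(\p_\be u, D_\al e_c) = A^b_{c,\al} q^b_\be$, and since $A$ is antisymmetric in its matrix indices this contributes $A^c_{a,\al} q^a_\be$ with the appropriate sign.

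The main step is then to contract against $\eta^{\al\be}$ and recognize the wave map equation. Contracting the first term, $\eta^{\al\be} u^*h(D_\al \p_\be u, e_c)$, and using the symmetry of $D\,du$ as a section (i.e. $D_\al \p_\be u = D_\be \p_\al u$, which follows from the torsion-freeness of the Levi-Civita connection on $\ti M$), one gets exactly $u^*h\bigl(\frac{1}{\sqrt{\abs\eta}}D_\al(\sqrt{\abs\eta}\,\eta^{\al\be}\p_\be u), e_c\bigr)$, which vanishes by the intrinsic wave map equation \eqref{wm}. I would need to be slightly careful about the volume-form factors $\sqrt{\abs\eta}$: the cleanest way is to write $\de q^c = -\frac{1}{\sqrt{\abs\eta}}\p_\al(\sqrt{\abs\eta}\,\eta^{\al\be}q^c_\be)$ and insert $q^c_\be = u^*h(\p_\be u, e_c)$, pulling the scalar factor inside and distributing $\frac{1}{\sqrt{\abs\eta}}\p_\al(\sqrt{\abs\eta}\,\eta^{\al\be}\,\cdot\,)$ as a covariant-divergence operator; the Leibniz rule for $D$ against $u^*h$ then splits it into the wave-map term (zero) plus the connection term.

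The anticipated obstacle is purely bookkeeping: getting the signs and the placement of the metric factors right when moving between the $\de = -\ast d \ast$ definition, the coordinate formula with $\sqrt{\abs\eta}$, and the covariant-derivative form, and making sure the index on $A$ ends up as $A^c_{a,\al}\eta^{\al\be}q^a_\be$ rather than its negative. Using the antisymmetry $A^c_{a,\al} = -A^a_{c,\al}$ established earlier in Section~\ref{Coulomb Frame} resolves the sign ambiguity. No genuine analytic difficulty arises here since $u$ is assumed smooth; this is an identity, and the computation is local.
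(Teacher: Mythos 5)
Your proof is correct, and in essence it is the paper's computation read in the opposite direction. The paper starts from the wave map equation in the form $0 = \frac{1}{\sqrt{\abs{\eta}}}D_\al(\sqrt{\abs{\eta}}\,\eta^{\al\be}q^a_\be e_a)$, expands by Leibniz against the frame $e_a$, and identifies the scalar part with $-\de q^c$; you instead start from $\de q^c = -\frac{1}{\sqrt{\abs{\eta}}}\p_\al(\sqrt{\abs{\eta}}\,\eta^{\al\be}q^c_\be)$, substitute $q^c_\be = u^*h(\p_\be u, e_c)$, and use metric-compatibility of $D$ to split off the wave-map term. Two small remarks. First, your invocation of the symmetry $D_\al\p_\be u = D_\be\p_\al u$ is superfluous: the product-rule term $\frac{1}{\sqrt{\abs{\eta}}}\p_\al(\sqrt{\abs{\eta}}\,\eta^{\al\be})\p_\be u$ and the $\eta^{\al\be}D_\al\p_\be u$ term already reassemble directly into $\frac{1}{\sqrt{\abs{\eta}}}D_\al(\sqrt{\abs{\eta}}\,\eta^{\al\be}\p_\be u)$, which vanishes by \eqref{wm} with no symmetrization needed (that symmetry is what the paper uses for Lemma~\ref{dq}, not here). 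Second, because you differentiate the scalar $u^*h(\p_\be u, e_c)$ rather than the vector-valued $q^a_\be e_a$, the connection form enters as $A^b_{c,\al}q^b_\be$ and you genuinely need the antisymmetry $A^b_{c,\al}=-A^c_{b,\al}$ to recover the stated index pattern; the paper's arrangement (expanding $D_\al e_a = A^c_{a,\al}e_c$) sidesteps this and does not use antisymmetry. Both routes are fine; yours costs one extra lemma (antisymmetry of $A$), while the paper's is a line shorter.
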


\begin{proof} This follows from the fact that $u$ is a wave map. We have that 
u is wave map if and only if

\begin{align*}
\frac{1}{\sqrt{\abs{\eta}}}D_{\al}\left( \sqrt{\abs{\eta}}\,  \eta^{\al \be} \p_{\be}u\right) =0\quad
\Longleftrightarrow \quad
\frac{1}{\sqrt{\abs{\eta}}}D_{\al}\left( \sqrt{\abs{\eta}}\,  \eta^{\al \be}q^a_\be e_a\right) =0
\end{align*} 
\\
Hence, we have 

\begin{align*}
0=&\frac{1}{\sqrt{\abs{\eta}}}D_{\al}\left( \sqrt{\abs{\eta}}\, \eta^{\al \be}q^a_\be e_a\right)\\
=&\frac{1}{\sqrt{\abs{\eta}}}\p_{\al}\left( \sqrt{\abs{\eta}}\, \eta^{\al \be}q^c_\be\right)e_c + \eta^{\al \be} q_{\be}^a A^c_{a, \al} e_c\\
=& -\de q^c e_c+ A^c_{a, \al} \eta^{\al \be}\, q^a_\be \, e_c
\end{align*} 
Therefore,
\begin{align*}
\de \, q^c = A^c_{a, \al}\, \eta^{\al \be}\, q^a_\be 
\end{align*}
as desired. 
\end{proof}
 
\vspace{\baselineskip}

\begin{lem} \label{dq} Let $u: (\tilde{M}, \eta) \to (N,h)$ be a smooth map and let $q=du$ be the representation of $du$ in the Coulomb frame as in \eqref{du=q}. Then we have
$dq^c= -A^c_b \wedge q^b$.
\end{lem}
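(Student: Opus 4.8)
The plan is to compute $dq^c$ directly from the definition of $q^c$ as the connection coefficient of $du$ in the Coulomb frame, using that $d^2 = 0$ on $u$ (or, equivalently, that the pull-back connection is torsion-free, which encodes the symmetry of second derivatives of $u$). Concretely, since $du = q^a e_a$ and $D$ is the pull-back connection on $u^*TN$, I would exploit the fact that $du$, viewed as a $u^*TN$-valued $1$-form, is $D$-closed: $D(du) = 0$. This is just the statement that the Hessian of $u$ is symmetric, i.e. the torsion-freeness of the Levi-Civita connection on $N$ pulled back along $u$, and it holds for any smooth map $u$, not just wave maps — consistent with the hypothesis of the lemma.

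First I would write out $D(du)$ in the frame. We have $du = q^a \otimes e_a$, so by the Leibniz rule for the induced connection on $T^*\tilde M \otimes u^*TN$,
\begin{align*}
0 = D(du) = D(q^a \otimes e_a) = (dq^a)\otimes e_a + (-1)\, q^a \wedge De_a = (dq^a)\otimes e_a - q^a \wedge (A^c_a \otimes e_c),
\end{align*}
where I used $De_a = A^c_a \otimes e_c$ from Section~\ref{Coulomb Frame} and the sign convention for the exterior covariant derivative acting on the $1$-form part. Collecting the coefficient of $e_c$ and relabeling indices gives
\begin{align*}
dq^c = q^a \wedge A^c_a = -A^c_a \wedge q^b\big|_{b=a},
\end{align*}
i.e. $dq^c = -A^c_b \wedge q^b$ after renaming the summation index, using that $A^c_b$ and $q^b$ are $1$-forms so $q^b \wedge A^c_b = -A^c_b \wedge q^b$.

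The one genuine point that needs care — and the main (mild) obstacle — is justifying that $D(du) = 0$, i.e. spelling out why the exterior covariant derivative of the tautological $u^*TN$-valued $1$-form $du$ vanishes. In coordinates this is the identity $D_\alpha \partial_\beta u - D_\beta \partial_\alpha u = \partial_\alpha\partial_\beta u - \partial_\beta\partial_\alpha u + (\Gamma^k_{ij}(u) - \Gamma^k_{ji}(u))\partial_\alpha u^i \partial_\beta u^j = 0$, which holds because partial derivatives commute and the Christoffel symbols of the Levi-Civita connection are symmetric in their lower indices. I would present this briefly — either invoking torsion-freeness abstractly or giving the two-line coordinate computation — and then the rest is the bookkeeping above. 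An alternative, fully parallel to the proof of Lemma~\ref{delta q}, is to start from $dq^c = d(u^*h(\partial_\bullet u, e_c))$, differentiate, and use $De_c = A^b_c \otimes e_b$ together with the symmetry of the Hessian of $u$; this leads to the same cancellation and the same conclusion $dq^c = -A^c_b \wedge q^b$.
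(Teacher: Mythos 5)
Your proof is correct and rests on exactly the same key observation as the paper's, namely that $D_\alpha\partial_\beta u - D_\beta\partial_\alpha u = 0$ (symmetry of second partials plus symmetry of the Christoffel symbols), which you correctly note holds for any smooth $u$. The paper carries out the resulting bookkeeping directly in components, computing $0 = D_\alpha(q^b_\beta e_b) - D_\beta(q^b_\alpha e_b)$ and reading off the coefficient of $e_c$, whereas you package the same identity as $D(du)=0$ and apply the Leibniz rule for the exterior covariant derivative of a $u^*TN$-valued $1$-form; this is a purely presentational difference, and both routes land on $dq^c = -A^c_b\wedge q^b$ via the same anticommutation of $1$-forms.
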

 
 \begin{proof}
 First we claim that  $D_{\al}(\p_{\be}u) - D_{\be}(\p_{\al} u) =0$. To see this recall that
 
 \begin{align*}
 D_{\al}(\p_{\be}u)^k = \p_{\al}\p_{\be} u^k + \Gamma^k_{i j}(u) \p_{\al}u^j \p_{\be}u^i
 \end{align*}
 \\
 Then the claim follows from the fact that $\p_{\al}\p_{\be} u^k = \p_{\be}\p_{\al} u^k$ and the fact that $\Gamma^{k}_{i j} = \Gamma^k_{j i}$. The above implies that
 
 \begin{align*}
 D_{\al}(q_{\be}^a \, e_a) - D_{\be}(q_{\al}^a \, e_a) =0
 \end{align*}
 \\
 Now, recalling that the $A$ is the connection form for the frame $e$ we have that 
 
\begin{align*}
0& = D_{\al}(q_{\be}^b \, e_b) - D_{\be}(q_{\al}^b \, e_b) \\
&= \left(\p_{\al}q_{\be}^c + A^c_{b, \al} \,q^b_{\be} - \p_{\be}q_{\al}^c - A^c_{b, \be} \,q^b_{\al}\right) e_c\\
& = \left(\p_{\al}q_{\be}^c - \p_{\be}q_{\al}^c + A^c_{b, \al}\, q^b_{\be}  - A^c_{b, \be}\, q^b_{\al}\right) e_c\\
&=\left( (dq^c)_{\al \be}- (A^c_b \wedge q^b)_{\be \al} \right) e_c
\end{align*}
 and the lemma follows.
 \end{proof}
 
Lemma ~\ref{dq} shows that in local coordinates on $\ti{M}$ we have that $(dq^c)_{\al \be}\, dx^{\al}\wedge dx^{\be} = (A^c_b \wedge q^b)_{\be \al} \,  dx^{\al}\wedge dx^{\be}$.  We can abbreviate this by writing $dq = -A\wedge q$.  Hence, by Lemma ~\ref{delta q} and Lemma ~\ref{dq} we obtain the following equation for  ~$q$ 
 
 \begin{align}\label{eqn for q}
 \Box q = d(\eta^{\al \be} A_{\al} q_{\be}) + \de( -A\wedge q)
 \end{align}
This is a system of wave equations for the $u^*TN$ valued $1$-form $q$. In coordinates we can express the operator $\de$ on a $2$-form $\omega$ in terms of  Levi-Civita connection, $\n$, on $\R \times M$ as follows

\begin{align}
(\de \omega)_{ \be} =- ( \n^{\al} \omega )_{\al \be}
\end{align}
where $\n^{\al} = \eta^{\al \be} \n_{\be}$. Hence, in  components, the equations for ~$q$ become
 
 \begin{align}\label{q components}
( \Box q)_{\ga} = \p_{\ga}(\eta^{\al \be} A_{\al} q_{\be}) + \n^{\al}( A\wedge q)_{\al \ga}
\end{align}
By expanding the right-hand side of ~\eqref{q components} we obtain the following equation for $q$.

\begin{prop}\label{wave q} Let $u:(\ti{M}, \eta) \to (N,h)$ be a smooth wave map. Let $q=du$ be the representation of $du$ in the Coulomb frame, $e$ as in \eqref{du=q}. Then $q$ satisfies the following wave equation of $1$-forms,  written in components as

\begin{align}\label{eqn for q components}
(\Box q)_{\ga} =  F_{\ga \al}\,  q^{\al} + A_{\al}\, A^{\al} \, q_{\ga}+ (\n^{\al}A)_{\al}\, q_{\ga} + 2 A^{\al}\, (\n_{\al} q)_{\ga} 
\end{align}
where $F$ is the curvature tensor on $u^*TN$. 
\end{prop}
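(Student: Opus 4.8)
The plan is to start from the already-derived equation \eqref{q components}, namely $(\Box q)_\ga = \p_\ga(\eta^{\al\be}A_\al q_\be) + \n^\al(A\wedge q)_{\al\ga}$, and expand both terms on the right-hand side using the Leibniz rule, then simplify the resulting expressions using the two structural identities established earlier in this section: Lemma~\ref{delta q}, which says $\de q^c = A^c_{a,\al}\eta^{\al\be}q^a_\be$, i.e. in the abbreviated notation $\de q = A^\al q_\al$ (the contraction $\eta^{\al\be}A_\al q_\be$ is exactly $-\de q$ up to sign, so the first term is $-\p_\ga(\de q)$), and Lemma~\ref{dq}, which says $dq = -A\wedge q$. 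The key point is that these two identities let us trade the ``bad'' terms—those where a derivative lands on $q$—for ``good'' ones, converting what looks like a full second-order operator on $q$ into the schematic form $F\cdot q + A\cdot A\cdot q + (\n A)\cdot q + A\cdot\n q$ claimed in \eqref{eqn for q components}.

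First I would expand the divergence term: writing $(A\wedge q)_{\al\ga} = A_\al q_\ga - A_\ga q_\al$ (this is the $2$-form wedge of a $1$-form valued object with a $1$-form), apply $\n^\al$ via the Leibniz rule to get $(\n^\al A)_{\al}\, q_\ga + A^\al (\n_\al q)_\ga - (\n^\al A)_\ga q_\al - A_\ga (\n^\al q)_\al$. The last term contains $\n^\al q_\al$, which is $-\de q$, so by Lemma~\ref{delta q} it becomes $A_\ga (A^\al q_\al)$, a term of type $A\cdot A\cdot q$. Second I would expand the gradient term $\p_\ga(\eta^{\al\be}A_\al q_\be) = \p_\ga(A^\al q_\al)$; here one must be careful that $\p_\ga$ is a coordinate derivative acting on a matrix-valued (connection-form) object paired against a bundle-valued object, so one rewrites it in terms of $\n_\ga$ plus connection corrections, ultimately producing $(\n_\ga A^\al)q_\al + A^\al(\n_\ga q)_\al$. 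The term $A^\al(\n_\ga q)_\al$ must be combined with the surviving $A^\al(\n_\al q)_\ga$ from the first expansion: using $dq = -A\wedge q$ from Lemma~\ref{dq} in the form $(\n_\al q)_\ga - (\n_\ga q)_\al = (dq)_{\al\ga} = -(A\wedge q)_{\al\ga} = -(A_\al q_\ga - A_\ga q_\al)$, we can replace $(\n_\ga q)_\al$ by $(\n_\al q)_\ga + A_\al q_\ga - A_\ga q_\al$, so that $A^\al(\n_\ga q)_\al = A^\al(\n_\al q)_\ga + A^\al A_\al q_\ga - A^\al A_\ga q_\al$. This is precisely where the $A_\al A^\al q_\ga$ term and the doubled $2A^\al(\n_\al q)_\ga$ term arise, and it is also where a leftover $(\n_\ga A^\al)q_\al$ combines with the earlier $-(\n^\al A)_\ga q_\al$ and, via the definition of the curvature $F = dA + A\wedge A$ (i.e. $F_{\ga\al} = \n_\ga A_\al - \n_\al A_\ga + [A_\ga, A_\al]$), assembles into the curvature term $F_{\ga\al}q^\al$ up to a commutator $[A_\ga, A_\al]q^\al$ that must be checked to either vanish or be absorbed into the $A\cdot A\cdot q$ terms.

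I expect the main obstacle to be bookkeeping the antisymmetrizations and the connection corrections correctly: one has to be scrupulous about which $\n$'s are full covariant derivatives (on $T^*\ti M\otimes u^*TN$, involving both the Levi-Civita connection on $\ti M$ and the pullback connection $D$ on $u^*TN$) versus coordinate derivatives $\p$, and about the fact that $A$ is the connection $1$-form expressing $D$ in the frame $e$—so ``$\n A$'' appearing in the statement is really the coordinate-level object $\n^\al A_\al$, not a doubly-covariant derivative. The sign conventions in $dq = -A\wedge q$ versus $\de q = +A^\al q_\al$, and the placement of indices in $(A\wedge q)_{\al\ga}$ versus $(A\wedge q)_{\ga\al}$, are the places where an error is most likely, so I would double-check the final identity \eqref{eqn for q components} by contracting a test case or by verifying it reduces to the known flat-space Shatah--Struwe equation when $g = g_0$. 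Once the algebra is organized, the proof is a direct if somewhat lengthy computation with no analytic input—all the work is in the two preceding lemmas and the definition of $F$.
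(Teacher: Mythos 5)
Your proposal is correct and follows the same basic strategy as the paper's proof: expand the right-hand side of \eqref{q components} via Leibniz, use Lemma~\ref{delta q} to handle the contracted term $A_\ga(\n^\al q)_\al$, use Lemma~\ref{dq} to rewrite the antisymmetrization of $\n q$, and then recognize the curvature $F = dA + A\wedge A$ in the combination $(\n_\ga A^\al)q_\al - (\n^\al A)_\ga q_\al$. The one place where you genuinely diverge from the paper, and improve on it, is the treatment of the gradient term. The paper expands $\p_\ga(\eta^{\al\be}A_\al q_\be)$ with coordinate partials, so it ends up with a cluster of residual Christoffel-symbol terms and has to verify separately the cancellation identity
\begin{align*}
(\p_{\ga}\eta^{\al \be}) A_{\al} q_{\be} + \eta^{\al \be} A_{\al}\Gamma^{\sigma}_{\be \ga} q_{\sigma} + \eta^{\al \be} \Gamma^{\sigma}_{\be \ga} A_{\sigma} q_{\al}=0
\end{align*}
by a direct index computation. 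You instead note that $\p_\ga(\eta^{\al\be}A_\al q_\be) = (\n_\ga A^\al)q_\al + A^\al(\n_\ga q)_\al$ immediately, because the contraction is a scalar (in the coordinate slot) and $\n \eta = 0$; the cancellation identity above is precisely a spelled-out version of metric compatibility, so your route gets it for free. This is a real tidying of the bookkeeping. Two small cautions you already flag, both worth acting on if you write this up: (1) track the orientation of the wedge carefully---the paper's Lemma~\ref{dq} is stated so that $(dq)_{\al\be} = (A\wedge q)_{\be\al} = -(A\wedge q)_{\al\be}$, which is what gives the correct sign in $(\n_\ga q)_\al = (\n_\al q)_\ga + A_\al q_\ga - A_\ga q_\al$; and (2) the commutator $[A_\ga, A_\al]q^\al$ coming from $F = dA + A\wedge A$ does not vanish by itself but exactly cancels against $-A^\al A_\ga q_\al + A_\ga A^\al q_\al$ produced by the $dq$ substitution, so the check you defer is necessary but straightforward.
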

 
 \begin{rem} Proposition \ref{wave q} essentially amounts to differentiating the wave map equation \eqref{wm} and then expressing the result in terms of the Coulomb frame.  We emphasize that in order to obtain \eqref{eqn for q components} we must begin with a wave map $u$. 
 \end{rem}
\begin{proof}
We begin by expanding the right-hand side of \eqref{q components}
 \begin{align*}
( \Box q)_{\ga} &=  \, \p_{\ga}(\eta^{\al \be} A_{\al} q_{\be}) + \n^{\al}( A\wedge q)_{\al \ga}\\
\\
&=  \, (\p_{\ga}\eta^{\al \be}) A_{\al} q_{\be} + \eta^{\al \be}  (\p_{\ga}A_{\al} ) q_{\be} + \eta^{\al \be} A_{\al}(\p_{\ga} q_{\be}) +  \eta^{\al \be}\left( \n_{\be}A \wedge q\right)_{\al \ga}\\
&\quad + \eta^{\al \be}\left(A\wedge \n_{\be}q\right)_{\al \ga}\\
\\
&=   (\p_{\ga}\eta^{\al \be}) A_{\al} q_{\be} + \eta^{\al \be}  (\p_{\ga}A_{\al} ) q_{\be} + \eta^{\al \be} A_{\al}(\p_{\ga} q_{\be}) + \eta^{\al \be} (\n_{\be}A)_{\al}\, q_{\ga} \\
&\quad -  \eta^{\al \be} (\n_{\be}A)_{\ga}\, q_{\al}+ \eta^{\al \be} A_{\al} (\n_{\be}q)_{\ga}  - \eta^{\al \be} A_{\ga} (\n_{\be}q)_{\al} 
\end{align*}
\\
Now, observe that 

\begin{align*}
\eta^{\al \be} (\p_{\ga} A_{\al}) q_{\be} = \p_{\al} A_{\ga} q^{\al} 
+ F_{\ga \al} q^{\al} - A_{\ga} A_{\al} q^{\al} +  A_{\al} A_{\ga} q^{\al}
\end{align*}
\\
and by Lemma ~\ref{delta q} we have 

\begin{align*} 
-\eta^{\al \be} A_{\ga} (\n_{\be} q)_{\al}= -A_{\ga} (\n^{\al}q)_{\al} = A_{\ga}\de q = A_{\ga} A_{\al} q^{\al}
\end{align*}
\\
Also, Lemma ~\ref{dq} implies

\begin{align*}
\eta^{\al \be} A_{\al} (\p_{\ga}q_{\be}) &= \eta^{\al \be} A_{\al}(dq)_{\ga \be} + \eta^{\al \be} A_{\al}\p_{\be} q_{\ga}\\
\\
&= \eta^{\al \be} A_{\al}(A\wedge q)_{\be \ga} +\eta^{\al \be} A_{\al} \p_{\be}q_{\ga}\\
\\
&=  A_{\al}A^{\al}q_{\ga} -  A_{\al}A_{\ga}q^{\al} + \eta^{\al \be} A_{\al}\p_{\be}q_{\ga}
\end{align*}
\\
Hence, 

 \begin{align*}
( \Box q)_{\ga} &= 
 F_{\ga \al} q^{\al}  +  A_{\al}A^{\al}q_{\ga} +  (\n^{\al}A)_{\al}\, q_{\ga}+ A^{\al} (\n_{\al}q)_{\ga} + \eta^{\al \be} A_{\al}\p_{\be}q_{\ga} \\
 &\quad + (\p_{\ga}\eta^{\al \be}) A_{\al} q_{\be} +\eta^{\al \be}\p_{\be} A_{\ga} q_{\al} -   \eta^{\al \be} (\n_{\be}A)_{\ga}\, q_{\al}  
\end{align*}
\\
Next observe that
\begin{align*}
\eta^{\al \be} A_{\al} \p_{\be} q_{\ga} = \eta^{\al \be} A_{\al}(\n_{\be} q)_{\ga} + \eta^{\al \be} A_{\al}\Gamma^{\sigma}_{\be \ga} q_{\sigma}
\end{align*}
and 
\begin{align*}
\eta^{\al \be} \p_{\be} A_{\ga} q_{\al} - \eta^{\al \be} (\n_{\be} A)_{\ga} q_{\al} = \eta^{\al \be} \Gamma^{\sigma}_{\be \ga} A_{\sigma} q_{\al}
\end{align*}
 Therefore,
  \begin{align*}
( \Box q)_{\ga} &= 
 F_{\ga \al} q^{\al}  +  A_{\al}A^{\al}q_{\ga} +  (\n^{\al}A)_{\al}\, q_{\ga}+ 2A^{\al} (\n_{\al}q)_{\ga}\\
 &\quad + (\p_{\ga}\eta^{\al \be}) A_{\al} q_{\be} + \eta^{\al \be} A_{\al}\Gamma^{\sigma}_{\be \ga} q_{\sigma}   + \eta^{\al \be} \Gamma^{\sigma}_{\be \ga} A_{\sigma} q_{\al}
\end{align*}

Finally, we claim that 

\begin{align}\label{Gamma}
(\p_{\ga}\eta^{\al \be}) A_{\al} q_{\be} + \eta^{\al \be} A_{\al}\Gamma^{\sigma}_{\be \ga} q_{\sigma}   + \eta^{\al \be} \Gamma^{\sigma}_{\be \ga} A_{\sigma} q_{\al}=0
\end{align}
This follows from the fact that $\Gamma^{\sigma}_{\be \ga} = \frac{1}{2}\eta^{\sigma \de}( \p_{\be}\eta_{\ga \de} +  \p_{\ga}\eta_{\be \de}-  \p_{\de}\eta_{\be \ga})$  and that $\p_{\ga}\eta^{\al \be} = -\eta^{\al \de} (\p_{\ga}\eta_{\de \sigma} )\eta^{\sigma \be}$, the latter statement being the general fact that for an invertible matrix $G(x)$ we have $\p_i G^{-1} = -G^{-1} \p_i G G^{-1}$.  To show \eqref{Gamma}, we write

\begin{align}
\notag(\p_{\ga}\eta^{\al \be}) A_{\al} q_{\be} + \eta^{\al \be} A_{\al}\Gamma^{\sigma}_{\be \ga} q_{\sigma}   &+ \eta^{\al \be} \Gamma^{\sigma}_{\be \ga} A_{\sigma} q_{\al}\\ \notag
\\ \notag
&= \left( \p_{\ga} \eta^{\al \be} + \eta^{\al \sigma} \Gamma^{\be}_{\sigma \ga} + \eta^{\sigma \be} \Gamma^{\al}_{\sigma \ga} \right) A_{\al} q_{\be}\\ \notag
\\ \notag
&=\left( -\eta^{\al \de}\eta^{\sigma \be}\p_{\ga} \eta_{\de \sigma}\right) A_{\al \be}\\  \label{sigma de}
&\quad   + \left(\frac{1}{2}\eta^{\al \sigma} \eta^{\be \de}(\p_{\sigma} g_{\de \ga} + \p_{\ga} g_{\sigma \de} - \p_{\de} g_{\sigma \ga})\right) A_{\al} q_{\be}\\ \notag
&\quad + \left(\frac{1}{2}\eta^{\sigma \be}\eta^{\al \de}( \p_{\sigma} g_{\de \ga} + \p_{\ga} g_{\sigma \de} - \p_{\de} g_{\sigma \ga}) \right) A_{\al} q_{\be}\\ \notag
&= 0
\end{align}
where the last line follows by swapping $\sigma$ and $\delta$ in line \eqref{sigma de} above. Therefore,

\begin{align*}
( \Box q)_{\ga} &= 
 F_{\ga \al} q^{\al}  +  A_{\al}A^{\al}q_{\ga} +  (\n^{\al}A)_{\al}\, q_{\ga}+ 2A^{\al} (\n_{\al}q)_{\ga}
 \end{align*}
as claimed.
\end{proof}

Next, we examine the left hand side of ~\eqref{eqn for q}. We claim that for a $1$-form q, we can write $\Box q = \ddot{q} + \Delta q$, where $\Delta$ denotes the Hodge Laplacian on the Riemannian manifold $M$ and $\ddot{q}$ is the $1$-form given in local coordinates by $\ddot{q}(t, x) =  \ddot{q}_{\al}(t,x) \, dx^{\al}$.  
 
 \begin{lem}We can express $\Box$  in local coordinates on $\R\times M$ by
 
 \begin{align}\label{Hodge}
 (\Box q)_{\ga} = \ddot{q}_{\ga} + (\Delta q)_{\ga}
 \end{align}
 \\
 where here $\Delta$ is the Hodge Laplacian on $1$-forms over $M$. 
 
 \end{lem}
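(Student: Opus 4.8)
The plan is a direct computation in the product splitting $\ti{M}=\R_t\times M$, exploiting that the Lorentzian metric $\eta=\textrm{diag}(-1,g)$ is independent of $t$. First I would record the structural consequences of this. Since $\p_0\eta_{\al\be}\equiv 0$ and $\eta$ is block diagonal, every Christoffel symbol $\n$-Christoffel symbol of $\eta$ that carries at least one index equal to $0$ vanishes, while the purely spatial ones coincide with the Christoffel symbols of $g$ on $M$; moreover $\sqrt{\abs{\eta}}=\sqrt{\abs{g}}$, $\eta^{00}=-1$ and $\eta^{0i}=0$. Hence $\n_0$ acts componentwise as $\p_t$, and $\n_i$ acts on a spatial index exactly as the Levi-Civita connection of $(M,g)$ does. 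Throughout I write $q=q_0\,dt+\bar q$ with $\bar q=q_i\,dx^i$, viewed for each fixed $t$ as a $1$-form on $M$, and $q_0$ viewed as a $t$-dependent function on $M$; and $d_M$, $\de_M$, $\Delta_M=d_M\de_M+\de_M d_M$ denote the exterior derivative, codifferential, and Hodge Laplacian on $(M,g)$, so that $\Delta_M=\de_M d_M$ on functions.

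Next I would compute the four ingredients of $\Box q=d\de q+\de d q$ directly from the coordinate formulas for $d$ and $\de$ recorded earlier in this section. The essential intermediate identities are
\begin{align*}
\de q = \dot q_0 + \de_M\bar q, \qquad (dq)_{0i} = \dot q_i - \p_i q_0, \qquad (dq)_{ij} = (d_M\bar q)_{ij},
\end{align*}
where the sign flip $\eta^{00}=-1$ is exactly what produces $+\dot q_0$ rather than $-\dot q_0$. Differentiating once more and contracting, and again using that only the spatial Christoffels of $g$ survive since the mixed ones vanish, one obtains componentwise
\begin{align*}
(d\de q)_0 &= \ddot q_0 + \de_M\dot{\bar q}, & (\de d q)_0 &= \de_M d_M q_0 - \de_M\dot{\bar q},\\
(d\de q)_k &= \p_k\dot q_0 + (d_M\de_M\bar q)_k, & (\de d q)_k &= \ddot q_k - \p_k\dot q_0 + (\de_M d_M\bar q)_k.
\end{align*}

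The one point to check with care is that, on adding $d\de q$ and $\de d q$, the cross terms $\pm\de_M\dot{\bar q}$ in the $dt$-component and $\pm\p_k\dot q_0$ in the spatial components cancel exactly. What is left is $(\Box q)_0=\ddot q_0+\Delta_M q_0$ and $(\Box q)_k=\ddot q_k+(\Delta_M\bar q)_k$; interpreting $\Delta q$ as the $1$-form whose $dt$-coefficient is $\Delta_M q_0$ and whose spatial part is $\Delta_M\bar q$, this is precisely $(\Box q)_{\ga}=\ddot q_{\ga}+(\Delta q)_{\ga}$. Conceptually the statement is just that the de Rham Laplacian of the semi-Riemannian product $(\R,-dt^2)\times(M,g)$ splits, with the Laplacian of the factor $(\R,-dt^2)$ equal to $+\p_t^2$; accordingly I expect the only real friction to be the bookkeeping of the Lorentzian signs (in particular keeping $\eta^{00}=-1$ straight, which is what yields the $+\ddot q_{\ga}$ term) and the distinction between $\de$ on $\ti{M}$ and $\de_M$ on $M$, rather than anything substantive.
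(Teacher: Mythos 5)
Your proof is correct and takes essentially the same route as the paper: a direct coordinate computation of $d\de q+\de dq$, using that $\eta=\mathrm{diag}(-1,g)$ is $t$-independent and block diagonal so that all mixed Christoffel symbols vanish, that $\eta^{00}=-1$ produces the $+\ddot q_\ga$ sign, and that the cross terms $\pm\de_M\dot{\bar q}$ and $\pm\p_k\dot q_0$ cancel. Your version is marginally more explicit in splitting $q$ into $q_0\,dt+\bar q$ and treating the $\ga=0$ and $\ga=k$ components separately, which clarifies how $\Delta$ is to be read on the $dt$-component, but the argument is the same.
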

 \begin{rem} Despite the appearance of the $+$ sign in expression \eqref{Hodge}, the expression $\Box  = \partial_t^2 + \Delta $ is, in fact, a hyperbolic operator as we will see in Proposition \ref{main eqn for q}. The sign in \eqref{Hodge} is simply due to our sign convention for the Hodge Laplacian $\Delta$. Our convention is such that for a $0$-form $f$, $\Delta f = -\Delta_g f$ where $\ds{\Delta_g = \frac{1}{\sqrt{\abs{g}}}\partial_i( \sqrt{\abs{g}} g^{ij} \partial_j)}$ is the Laplace-Beltami operator on $M$. 
   \end{rem}
 \begin{proof} Let $\eta= \textrm{diag}(-1, g)$ denote the metric on $\ti{M} = \R\times M$. In the following argument,  $0\le \al, \be, \ga \le d$ will be indices denoting coordinates on $\R\times M$  and $1\le i,j \le d$ will be indices denoting coordinates on $M$. Also we denote by ${d}_M$, (resp. $\de_M$), the exterior differential, (resp. co-differential),  on $M$. It follows that
 
 \begin{align*}
 (\Box{q})_{\ga} &= (d\de q)_{\ga} + (\de d q)_{\ga}\\
 \\
 &= -\p_{\ga}\left( \eta^{\al \be} (\n_{\be}q)_{\al}\right) - \eta^{\al \be} (\n_{\be} dq)_{\al \ga}\\
 \\
 &= -\p_{\ga}\left( - (\n_0 q)_0 +  g^{i j} (\n_{j}q)_{i}\right) + \left( \n_0 dq\right)_{0 \ga} - g^{ij}(\n_{j} dq)_{i \ga}\\
 \\
 &=  \p_{\ga} \p_0 q_0 -\p_{\ga}\left( g^{i j} (\n_{j}q)_{i}\right) + \p_0\p_0 q_{\ga}  - \p_{\ga} \p_0 q_0\ - g^{ij}(\n_{j} dq)_{i \ga}\\
 \\
 &= \ddot{q}_{\ga} +(d_M\de_M q)_{\ga} + (\de_M d_M q)_{\ga}\\
 \\
 &= \ddot{q}_{\ga} + (\Delta q)_{\ga}
\end{align*}
 \\
 Above we have used the fact that the Christoffel symbols $\Gamma_{\al \be}^{\de}= 0$ if either $\al, \be$, or $\de$ are equal to $0$. 
\end{proof}
 
We can derive a coordinate representation for the Hodge Laplacian $\Delta$ on $1$-forms in terms of the Laplace-Beltrami  operator, $\Delta_g$,  on functions plus lower order terms.

\begin{lem} The Hodge Laplacian on $1$-forms $q$ can be written in coordinates as
\begin{align}\label{Laplace Beltrami}
(\Delta q)_{\ga}= -\Delta_g q_{\ga} + 2g^{ij}\Gamma^{k}_{j \ga} \p_{i}q_{k}+\p_{\ga}(g^{ij}\Gamma^k_{ij})q_k 
\end{align}
\end{lem}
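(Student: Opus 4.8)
The plan is to compute the Hodge Laplacian $\Box = d\de + \de d$ on the $1$-form $q$ directly in local coordinates on $M$, using the same conventions for $d$ and $\de$ already in force in Section~\ref{Coulomb Frame}: for a $1$-form $q$ one has $\de q = -g^{ij}(\n_j q)_i = -g^{ij}(\p_j q_i - \Gamma^k_{ij}q_k)$; for a $2$-form $\omega$ one has $(\de\omega)_\ga = -g^{ij}(\n_j\omega)_{i\ga}$; and $(dq)_{ij} = \p_i q_j - \p_j q_i$. I would deliberately avoid the Weitzenb\"ock identity $\Delta q = \n^*\n q + \mathrm{Ric}(q)$, since the curvature it produces would then have to be re-expanded in terms of the Christoffel symbols anyway; the direct computation lands exactly on the stated right-hand side. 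I would also record the standard fact $\Delta_g f = g^{ij}\p_i\p_j f - g^{ij}\Gamma^k_{ij}\p_k f$ (equivalently the $\tfrac{1}{\sqrt{\abs g}}\p_i(\sqrt{\abs g}g^{ij}\p_j f)$ formula) at the outset, as the right-hand side of the claimed identity must be matched against this expanded form of $\Delta_g q_\ga$.

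First I would compute $(d\de q)_\ga = \p_\ga(\de q) = -\p_\ga\!\bigl(g^{ij}\p_j q_i\bigr) + \p_\ga\!\bigl(g^{ij}\Gamma^k_{ij}q_k\bigr)$. Next I would compute $(\de dq)_\ga = -g^{ij}(\n_j dq)_{i\ga}$, expanding the covariant derivative of the $2$-form $dq$ with one Christoffel term on each of its two slots; substituting $(dq)_{i\ga} = \p_i q_\ga - \p_\ga q_i$ produces a pure second-order term $-g^{ij}\p_i\p_j q_\ga$, a mixed second-order term $+g^{ij}\p_j\p_\ga q_i$, and several first-order Christoffel terms. Adding the two pieces, the mixed second-order term $+g^{ij}\p_j\p_\ga q_i$ cancels against $-g^{ij}\p_\ga\p_j q_i$ coming from $\p_\ga(g^{ij}\p_j q_i)$, leaving $-g^{ij}\p_i\p_j q_\ga$ as the only surviving second-order term, together with a stray factor $-(\p_\ga g^{ij})\p_j q_i$.

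It then remains to match first- and zeroth-order terms. Writing the target right-hand side with $\Delta_g q_\ga = g^{ij}\p_i\p_j q_\ga - g^{ij}\Gamma^k_{ij}\p_k q_\ga$, the $-\Delta_g q_\ga$ piece accounts for the surviving second-order term and leaves $+g^{ij}\Gamma^k_{ij}\p_k q_\ga$. Expanding the $(dq)$ factors in the first-order Christoffel terms from the $\de dq$ computation and peeling off $\p_\ga(g^{ij}\Gamma^k_{ij}q_k) = \p_\ga(g^{ij}\Gamma^k_{ij})q_k + g^{ij}\Gamma^k_{ij}\p_\ga q_k$, most terms cancel in pairs (including the $\p_\ga(g^{ij}\Gamma^k_{ij})q_k$ contributions on both sides), and the whole identity reduces to the single algebraic claim
\begin{align*}
-(\p_\ga g^{ij})\p_j q_i = g^{ij}\Gamma^k_{j\ga}\p_i q_k + g^{ij}\Gamma^k_{j\ga}\p_k q_i .
\end{align*}
This is exactly metric compatibility: from $\n_\ga g^{ik} = \p_\ga g^{ik} + g^{mk}\Gamma^i_{m\ga} + g^{im}\Gamma^k_{m\ga} = 0$ one gets $-\p_\ga g^{ik} = g^{mk}\Gamma^i_{m\ga} + g^{im}\Gamma^k_{m\ga}$, and contracting with $\p_i q_k$, then relabelling dummy indices using the symmetry of $g^{ij}$ and of $\Gamma^k_{ij}$ in its lower indices, yields precisely the right-hand side above.

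The computation is entirely routine; the one place demanding care is the index bookkeeping in this last step — in particular recognizing that $g^{ij}\Gamma^k_{j\ga}\p_k q_i$ and $g^{ij}\Gamma^k_{j\ga}\p_i q_k$ are genuinely different contractions (the derivative index sits on $g$ in one and on $\Gamma$ in the other), so they cannot be merged naively but must instead be assembled, via metric compatibility, to absorb the $(\p_\ga g^{ij})\p_j q_i$ term. That is the main (mild) obstacle; everything else is symbol pushing.
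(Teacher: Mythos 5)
Your proposal is correct and follows essentially the same route as the paper: expand $\Delta q = d\de q + \de dq$ in local coordinates, let the mixed second-order terms cancel, peel off $-\Delta_g q_\ga$ and $\p_\ga(g^{ij}\Gamma^k_{ij})q_k$, and reduce the leftover first-order terms to a single algebraic identity. The paper closes that identity by substituting $\p_\ga g^{ij} = -g^{ik}(\p_\ga g_{km})g^{mj}$ and collecting to $2g^{ij}\Gamma^k_{j\ga}\p_i q_k$; your invocation of metric compatibility $\n_\ga g^{ik}=0$ is the same computation phrased a bit more conceptually, and checks out after the index relabelling you describe.
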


\begin{proof} Here we will let $d$ and $\de$ denote the exterior differential and exterior co-differential on $M$. Then, 

\begin{align*}
(\Delta q)_{\ga}& = (d\de q)_{\ga} + (\de d q)_{\ga}\\
\\
&= -\p_{\ga}\left( g^{ij}(\n_j q)_i\right) - g^{ij} \left( \n_j dq\right)_{i\ga}\\
\\
&= -(\p_{\ga} g^{ij}) (\n_j q)_i - g^{ij} \p_{\ga}( \p_jq_i - \Gamma^k_{ji} q_k)\\
&\quad - g^{ij}\left( \p_j(dq)_{i\ga} - \Gamma^k_{ji}(dq)_{k \ga} - \Gamma^k_{j\ga}(dq)_{ik} \right)\\
\\
&= -(\p_{\ga} g^{ij})\p_j q_i + \p_{\ga} (g^{ij}\Gamma^k_{ji}) q_k  - g^{ij} \p_j \p_i q_{\ga} + g^{ij} \Gamma^k_{ji} \p_k q_{\ga} \\
&\quad+g^{ij} \Gamma^{k}_{j\ga} \p_i q_k - g^{ij} \Gamma^{k}_{j\ga} \p_k q_i
\end{align*}

Recalling that $\Delta_g q_{\ga} = g^{ij} \p_j \p_i q_{\ga} - g^{ij} \Gamma^k_{ji} \p_k q_{\ga}$  and that $\p_{\ga} g^{ij} = - g^{ik}\p_{\ga} g_{km} g^{mj}$ we have then that 

\begin{align*}
(\Delta q)_{\ga}&= -\Delta_g q_{\ga} +  \p_{\ga} (g^{ij}\Gamma^k_{ji}) q_k +  g^{ik}\p_{\ga} g_{km} g^{mj}\p_j q_i + g^{ij} \Gamma^{k}_{j\ga} \p_i q_k - g^{ij} \Gamma^{k}_{j\ga} \p_k q_i
\end{align*}
Finally observe that 

\begin{align*}
 g^{ik}\p_{\ga} g_{km} g^{mj}\p_j q_i + g^{ij} \Gamma^{k}_{j\ga} \p_i q_k - g^{ij} \Gamma^{k}_{j\ga} \p_k q_i = 2g^{ij}\Gamma^{k}_{j \ga} \p_{i}q_{k}
 \end{align*} 
 Therefore 
 
 \begin{align*}
 (\Delta q)_{\ga}&= -\Delta_g q_{\ga} + 2g^{ij}\Gamma^{k}_{j \ga} \p_{i}q_{k}+\p_{\ga}(g^{ij}\Gamma^k_{ij})q_k 
\end{align*}
which is exactly ~\eqref{Laplace Beltrami}.
  \end{proof}
 
\vspace{\baselineskip}

 Combining the results of the previous two lemmas with Proposition \ref{wave q} gives us a system of nonlinear wave equations for the components of $q$. The following Proposition is the main result of this section and will be used to prove a priori estimates for the differential, $du$, of a wave map $u$. 
 
 \begin{prop}\label{main eqn for q} Let $u: (\ti{M}, \eta) \to (N,h)$ be a smooth wave map.  Let $q=du$ be the representation of $du$ in the Coulomb frame, $e$ as in \eqref{du=q}. Then, the components of $q$ satisfy the following system of variable coefficient wave equations: 
 
 \begin{multline}\label{scalar eqn for q} 
 \ddot{q}_{\ga} -\Delta_g q_{\ga} + 2g^{ij}\Gamma^{k}_{j \ga} \p_{i}q_{k}+\p_{\ga}(g^{ij}\Gamma^k_{ij})q_k\\
 =  F_{\ga \al}\,  q^{\al} + A_{\al}\, A^{\al} \, q_{\ga}
 + (\n^{\al}A)_{\al}\, q_{\ga} + 2 A^{\al}\, (\n_{\al} q)_{\ga}  
\end{multline}
Expanding the term $\Delta_{g} q_{\ga}$, the left-hand side of the above system becomes 
\begin{align}\label{div form}
\ddot{q}_{\ga} - g^{ij} \p_i \p_j q_{\ga} +g^{ij} \Gamma^{k}_{ij} \p_k q_{\ga} + 2g^{ij}\Gamma^{k}_{j \ga} \p_{i}q_{k}+\p_{\ga}(g^{ij}\Gamma^k_{ij})q_k 
\end{align}
\end{prop}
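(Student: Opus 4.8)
The statement is essentially an assembly of the three preceding lemmas, so the plan is purely to combine them in the correct order. First I would invoke Proposition~\ref{wave q}, which gives the wave equation of $1$-forms
\begin{align*}
(\Box q)_{\ga} = F_{\ga \al}\, q^{\al} + A_{\al}\, A^{\al}\, q_{\ga} + (\n^{\al}A)_{\al}\, q_{\ga} + 2A^{\al}\,(\n_{\al}q)_{\ga},
\end{align*}
valid because $u$ is a wave map. This already has the right-hand side of \eqref{scalar eqn for q}, so all the remaining work is to rewrite the left-hand side $(\Box q)_{\ga}$ in coordinates on $M$.

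Next I would apply the lemma giving \eqref{Hodge}, namely $(\Box q)_{\ga} = \ddot{q}_{\ga} + (\Delta q)_{\ga}$, where $\Delta$ is the Hodge Laplacian on $1$-forms over $M$; this uses only that the mixed Christoffel symbols $\Gamma^{\de}_{\al \be}$ of $\eta = \mathrm{diag}(-1,g)$ vanish when any index is $0$. Then I would substitute the coordinate expression \eqref{Laplace Beltrami},
\begin{align*}
(\Delta q)_{\ga} = -\Delta_g q_{\ga} + 2g^{ij}\Gamma^{k}_{j \ga}\p_{i}q_{k} + \p_{\ga}(g^{ij}\Gamma^k_{ij}) q_k,
\end{align*}
to obtain \eqref{scalar eqn for q}. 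For the final display \eqref{div form} I would simply expand the Laplace--Beltrami operator using $\Delta_g q_{\ga} = g^{ij}\p_i\p_j q_{\ga} - g^{ij}\Gamma^k_{ij}\p_k q_{\ga}$ and collect terms.

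There is genuinely no obstacle here: every ingredient has already been proved, and the step that would normally be the crux, the identity \eqref{eqn for q components}, was established in Proposition~\ref{wave q}. The only thing to be careful about is bookkeeping with the two Laplacian sign conventions (the Hodge $\Delta$ versus $\Delta_g$), which the remarks in the excerpt already flag, and making sure the index contractions $F_{\ga\al}q^{\al}$, $A_\al A^\al q_\ga$, $(\n^\al A)_\al q_\ga$, $A^\al(\n_\al q)_\ga$ on the right-hand side are carried over unchanged. Thus the proof is a three-line concatenation of the cited lemmas followed by the one-line expansion of $\Delta_g$.
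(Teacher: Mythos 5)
Your proposal is correct and matches the paper's own (implicit) argument exactly: the paper states the proposition immediately after the two Hodge-Laplacian lemmas and Proposition~\ref{wave q}, with only the remark that ``combining the results of the previous two lemmas with Proposition~\ref{wave q}'' gives the system, and supplies no further proof. Your concatenation of \eqref{eqn for q components}, \eqref{Hodge}, and \eqref{Laplace Beltrami}, followed by the expansion $\Delta_g q_{\ga} = g^{ij}\p_i\p_j q_{\ga} - g^{ij}\Gamma^k_{ij}\p_k q_{\ga}$, is precisely what is intended.
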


\vspace{\baselineskip}

\section{A Priori Estimates} \label{a priori estimates}
To derive a priori bounds for wave maps $u$ we use the Strichartz estimates for variable coefficient wave equations proved in ~\cite{Met-Tat}. We require a Lorentz space refinement of the estimates in ~\cite{Met-Tat} obtained by a rephrasing in terms of Besov spaces and real interpolation. Equation ~\eqref{scalar eqn for q}, the decay assumptions on the metric $g$ specified in ~\eqref{g}--\eqref{p^2 g},  and  ~\cite[Theorems $4$ and $6$]{Met-Tat} imply the following estimates for $q$: 

\begin{align}\label{Besov Strichartz}
\|q\|_{L^2_t \dot{B}^{\frac{1}{6}}_{6,2}} + \| \p q\|_{L^{\infty}_tL^2_x}   \lesssim \|q[0]\|_{\dot{H}^1\times L^2}  + \|H\|_{L^1_t L^2_x}
\end{align}
where $H_{\ga}:= F_{\ga \al}\,  q^{\al} + A_{\al}\, A^{\al} \, q_{\ga}+ (\n^{\al}A)_{\al}\, q_{\ga} + 2 A^{\al}\, (\n_{\al} q)_{\ga} $ is the nonlinearity in ~\eqref{scalar eqn for q}. There are a few things to note. The first is that we have extended the result in ~\cite{Met-Tat} to the case of a system of variable coefficient equations as $q$ is the solution to such a system. However this extension is immediate as the methods in ~\cite{Met-Tat} allow us to treat the lower order terms in ~\eqref{scalar eqn for q} perturbatively, and the principle part of our operator is diagonal. Hence the system of equations for $q$ in ~\eqref{scalar eqn for q} falls directly into the class of equations that are treated in ~\cite{Met-Tat} because of the assumptions in ~\eqref{g}--\eqref{p^2 g}. The second observation is that a Besov norm appears on the left-hand side in ~\eqref{Besov Strichartz}. This refinement can be obtained by an easy modification of the proof of Lemma 19 in ~\cite{Met-Tat}. For completeness we carry out this refinement in Section ~\ref{Strichartz}.

  To obtain a Lorentz space version of estimate ~\eqref{Besov Strichartz} we use the Besov space embedding into Lorentz spaces, see Lemma ~\ref{Sob embedding Lorentz}, with $d=4$, $s=\frac{1}{6}$, $q=6$, $p=8$ and $r=2$ which gives 

\begin{align*}
\dot{B}^{\frac{1}{6}}_{6,2}(\R^4) \hookrightarrow L^{8,2}(\R^4)
\end{align*}
This, together with the estimate in ~\eqref{Besov Strichartz}, gives 

\begin{align}\label{Lorentz Strichartz} 
\|q\|_{L^2_t L^{8,2}_x} + \| \p q\|_{L^{\infty}_tL^2_x}  \lesssim \|q[0]\|_{\dot{H}^1\times L^2} +  \|H\|_{L^1_t L^2_x}
\end{align}

We use Proposition ~\ref{main elliptic estimates}, together with Sobolev embedding to estimate the various terms in $H$. In local coordinates on $M$, $H$ is given by 

\begin{align}\label{H}
H_{\ga} &=\eta^{\ga \al} F_{\ga \be}\,  q_{\al} + \eta^{\al \be} A_{\al}\, A_{\be} \, q_{\ga}+ \eta^{\al \be} (\p_{\be}A_{\al})\, q_{\ga} \\
&\quad + 2\eta^{\al \be} A_{\be} \Gamma^{\de}_{\al \ga} q_{\de} + 2 \eta^{\al \be}A_{\be}\, (\p_{\al} q_{\ga}) \notag
\end{align}
Hence, at any time $t \in [0, T)$, (where $T $ is chosen as in ~\eqref{bootstrap}, for the sake of our bootstrap argument), we have

\begin{align}\label{L^2 H}
\|H\|_{L^2_x} &\lesssim \|Fq\|_{L^2_x} + \|A^2 q\|_{L^2_x} + \|(\p A) q\|_{L^2_x} + \|\Gamma A q\|_{L^2_x} + \|A \p q\|_{L^2_x}\\ \notag
\\ \notag
&\lesssim \|F\|_{L^4_x} \|q\|_{L^4_x} + \|A^2\|_{L_x^{\frac{8}{3}}}\|q\|_{L_x^8} + \|\p A\|_{L_x^{\frac{8}{3}}}\|q\|_{L_x^8} \\
&\quad+ \|\Gamma\|_{L^4_x} \|A\|_{L_x^8} \|q\|_{L_x^8} + \|A\|_{L_x^{\infty}} \|\p q\|_{L_x^2}\notag \\ \notag 
\\ \notag 
&\lesssim  \| q\|_{L_x^8}^2 \|\p q\|_{L_x^2} + \|A\|_{\dot{W}_x^{1, \frac{8}{3}}}\|q\|_{L_x^8} + \|q\|_{L_x^{8,2}}^2\|\p q\|_{L_x^2}\\ \notag
\\ \notag 
&\lesssim \|q\|_{L_x^{8,2}}^2 \|\p q\|_{L_x^2}  
\end{align}
where in the third inequality above we have used  Proposition ~\ref{main elliptic estimates} and Sobolev embedding to show that 
$\|A^2\|_{L^{\frac{8}{3}}} \lesssim \|A\|_{L^4} \|A\|_{L^8} \lesssim \|A\|_{\dot{W}^{1, \frac{8}{3}}}$. This implies that we have the estimate 

\begin{align}\label{q local}
\|q\|_{L^2_t([0,T); L_x^{8,2})} + \| \p q\|_{L^{\infty}_t([0,T);L_x^2)}   &\lesssim \|q[0]\|_{\dot{H}^1\times L^2}\\ \notag
&\quad +  \| q\|_{L_t^2([0,T); L_x^{(8,2)})}^2 \|\p q\|_{L_t^{\infty}([0,T);L^2_x)}  \\ \notag
\\\notag
&\lesssim \|q[0]\|_{\dot{H}^1\times L^2}\\ \notag
&\quad + \left(\|q\|_{L^2_t([0,T); L_x^{(8,2)})} + \| \p q\|_{L_t^{\infty}([0,T); L_x^2)}\right)^3
\end{align}
 By the equivalence of the extrinsic and intrinsic norms of $du=q$,  see Section ~\ref{Equivalence of Norms}, we can show 
 
 \begin{align}
 \|q[0]\|_{\dot{H}^1 \times L^2} \lesssim \|du_0\|_{\dot{H}^1} + \|u_1\|_{\dot{H}^1} \lesssim \ep_0
 \end{align}
  Hence as long as $\ep_0$ is sufficiently small, we can use a bootstrap/continuity-trapping argument to absorb  the cubic term,
  \begin{align*}
  \left(\|q\|_{L^2([0,T); L^{(8,2)})} + \| \p q\|_{L^{\infty}([0,T); L^2)}\right)^3
  \end{align*}
   on the left-hand side in \eqref{q local} and obtain the global in time estimate
 
 \begin{align} \label{q global}
 \|q\|_{L^2 L^{8,2}} + \| \p q\|_{L^{\infty}L^2}   &\lesssim  \|du_0\|_{\dot{H}^1} + \|u_1\|_{\dot{H}^1}
 \end{align}

Using again the equivalence of the relevant extrinsic norms of $du$ and intrinsic norms of $q$, see Section ~\ref{Equivalence of Norms}, and recalling that $\|du\|_{L^2L^8} \le \|du\|_{L^2L^{8,2}}$, we obtain the desired global a priori bounds which we record in the following proposition:

\begin{prop}\label{global est}
Let $(\ti{M}, \eta) =(\R \times \R^4, \eta)$, where $\eta = \mathrm{diag}(-1, g)$, and $g$ satisfies the conditions ~\eqref{g}--\eqref{p^2 g}. Let $u: (\ti{M}, \eta) \to (N, h)$ be a smooth wave map with initial data $(u_0, u_1)$ satisfying ~\eqref{small energy}. Then $du$ satisfies the following global, a priori estimates
\begin{align}\label{du global}
\|du\|_{L^2_t L^{8}_x} + \|du\|_{L_t^{\infty}\dot{H}^1_x}  \lesssim \|du_0\|_{\dot{H}^1} + \|u_1\|_{\dot{H}^1} \lesssim \ep_0
 \end{align}
 \end{prop}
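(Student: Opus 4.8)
The statement is essentially a repackaging of the estimates \eqref{q global} that were derived just above it, so the proof amounts to transferring the bounds on the Coulomb-frame representation $q = du$ back to $du$ itself and dropping from a Lorentz norm to the weaker $L^8$ norm. The plan is therefore to invoke the work already done and assemble the pieces. First I would recall that by Proposition \ref{main eqn for q} the components of $q$ solve the variable-coefficient wave system \eqref{scalar eqn for q}, so the Strichartz estimate \eqref{Lorentz Strichartz} (the Lorentz refinement of the Metcalfe--Tataru estimate) applies with nonlinearity $H$. Next I would cite the bound \eqref{L^2 H}, namely $\|H\|_{L^2_x} \lesssim \|q\|_{L^{8,2}_x}^2 \|\p q\|_{L^2_x}$, which uses Proposition \ref{main elliptic estimates} to control $A$ in the various norms $L^4$, $\dot W^{1,8/3}$, and $L^\infty$, together with Sobolev embedding for the curvature term $F = R(u)(du,du)$; feeding this into \eqref{Lorentz Strichartz} over the bootstrap interval $[0,T)$ yields the cubic inequality \eqref{q local}.

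The second ingredient is the smallness of the data: by the equivalence of the intrinsic and extrinsic homogeneous Sobolev norms established in Section \ref{Equivalence of Norms}, one has $\|q[0]\|_{\dot H^1 \times L^2} \lesssim \|du_0\|_{\dot H^1} + \|u_1\|_{\dot H^1} \lesssim \ep_0$. Then, choosing $\ep_0$ small enough, a continuity/bootstrap-trapping argument absorbs the cubic term $\big(\|q\|_{L^2_t([0,T);L^{8,2}_x)} + \|\p q\|_{L^\infty_t([0,T);L^2_x)}\big)^3$ into the left-hand side of \eqref{q local}, giving the time-global bound \eqref{q global} with the implicit constant independent of $T$. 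Finally, translating back via Section \ref{Equivalence of Norms} once more — $\|du\|_{L^\infty_t \dot H^1_x} \simeq \|\p q\|_{L^\infty_t L^2_x}$ and $\|du\|_{L^2_t L^8_x} = \|q\|_{L^2_t L^8_x} \le \|q\|_{L^2_t L^{8,2}_x}$ since $L^{8,2} \hookrightarrow L^{8,8} = L^8$ — produces \eqref{du global}.

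There is no serious obstacle left at this stage: all the analytic difficulty has been front-loaded into Proposition \ref{main elliptic estimates} (the elliptic estimates for the connection form, including the delicate $L^\infty$ bound proved via Lorentz-space Calderón--Zygmund theory) and into the Lorentz refinement \eqref{Lorentz Strichartz} of the Metcalfe--Tataru Strichartz estimates. The only point that requires a word of care is the bootstrap closure itself: one must check that the hypothesis \eqref{bootstrap hypothesis} used to run Proposition \ref{main elliptic estimates} is consistent with the conclusion, i.e. that the continuity argument is not circular. This is handled in the standard way — the set of times $T$ for which \eqref{bootstrap hypothesis} holds is nonempty (small $T$, by the local theory of Remark \ref{high regularity}), closed, and open (by \eqref{q local} with the cubic term strictly improving the constant for small $\ep_0$) — so it is all of $[0,\infty)$, and the a priori estimate \eqref{du global} follows.
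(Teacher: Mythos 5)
Your proposal follows the paper's argument essentially line for line: apply the Lorentz-refined Strichartz estimate \eqref{Lorentz Strichartz} to the wave system \eqref{scalar eqn for q}, control the nonlinearity via \eqref{L^2 H} using Proposition~\ref{main elliptic estimates}, close the resulting cubic inequality \eqref{q local} by a continuity/bootstrap argument using the smallness $\|q[0]\|_{\dot H^1\times L^2}\lesssim\ep_0$ from Section~\ref{Equivalence of Norms}, and then transfer back from $q$ to $du$ using the norm equivalence and the inclusion $L^{8,2}\hookrightarrow L^8$. The added remark on the openness/closedness of the bootstrap set is a correct and slightly more explicit rendering of what the paper calls a ``continuity-trapping'' argument; there is no substantive difference in approach.
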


\section{Higher Regularity}\label{Higher Regularity}

In this section we show that higher regularity of the data is preserved. In particular, we show that if we begin with initial data,  $(u_0, u_1) \in H^s\times H^{s-1}((\R^4,g), TN)$ for any $s \ge 2$, such that ~\eqref{small energy} holds, then the $H^s\times H^{s-1}$ norm of the solution, $(u(t), \dot{u}(t))$, to ~\eqref{wm}, is finite for any time $t$.   This will allow us to immediately deduce global existence of wave maps with data $(u_0, u_1) \in H^{s} \times H^{s-1}$ satisfying ~\eqref{small energy} for $s\ge 5$, as any local solution to the Cauchy problem can then be extended past any finite time, $T$, using the high regularity local theory with data $(u(T), \dot{u}(T))$, which is finite in $H^{s} \times H^{s-1}$ due to the results in this section. We note that the a priori estimates,  ~\eqref{du global}, and in particular the global control of  $\|du\|_{L^2_tL^8_x}$, will play a key role in the argument. We formulate the main result of this section in the following proposition:

\begin{prop}\label{high reg}
Let $(\ti{M}, \eta) =(\R \times \R^4, \eta)$, where $\eta = \mathrm{diag}(-1, g)$, and $g$ satisfies the conditions ~\eqref{g}--\eqref{p^2 g}. Let $u: (\ti{M}, \eta) \to (N, h)$ be a solution to \eqref{wm} with initial data $(u_0, u_1)$ that is small in the sense of ~\eqref{small energy}. Suppose in addition that  $(u_0, u_1) \in  H^s\times H^{s-1}((\R^4,g), TN)$ with $s \ge 2$. Then for any time $T$, the $H^s\times H^{s-1}((\R^4,g), N)$ norm of the solution $(u(T), \dot{u}(T))$  is finite. In particular,

\begin{align}\label{H^s times H^{s-1}}
\sup_{0\le t\le T}\norm{(u(t), \dot{u}(t))}_{H^s \times H^{s-1}} \le C_T \norm{u[0]}_{H^{s} \times H^{s-1}}
\end{align}  
where the constant, $C_T$, depends on $T$ and $\ep_0$. 
\end{prop}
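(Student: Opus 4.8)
The plan is to prove \eqref{H^s times H^{s-1}} by a bootstrap/energy argument, induction on the regularity level $s$, with the global a priori bound \eqref{du global} as the engine. First I would reduce to the extrinsic formulation \eqref{wm extrinsic local}, and—using the norm-equivalence results from Section \ref{Equivalence of Norms} and the appendix (Section \ref{sobolev spaces})—replace the covariant $H^s((\R^4,g);N)$ norms with the flat Euclidean Sobolev norms, so that the non-Euclidean metric $g$ only appears through the harmless variable coefficients in $\Box_\eta$. It suffices to prove the estimate on a fixed interval $[0,T]$; by finiteness of $\|du\|_{L^2_t([0,T];L^8_x)}$ one can partition $[0,T]$ into finitely many subintervals $I_j$ on which $\|du\|_{L^2(I_j;L^8_x)}$ is as small as we like, and then chain the estimates across the subintervals, the number of subintervals being controlled by $T$ and $\ep_0$; this is where the constant $C_T$ comes from.

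The core is the energy estimate at level $s$. I would apply a derivative operator of order $\ell$ (with $1\le \ell\le s-1$; here one may as well work with $\partial^\alpha$ for $|\alpha|=\ell$, or with $\langle\nabla\rangle^{\ell}$ using a Littlewood–Paley/commutator formulation) to \eqref{wm extrinsic local}, pair with $\partial_t\partial^\alpha u$, and integrate over $M$ to get
\begin{align*}
\frac{d}{dt}\Big(\|\partial^\alpha\partial_t u\|_{L^2}^2 + \|d_M\partial^\alpha u\|_{L^2}^2\Big) \lesssim \|\partial^\alpha\big(\eta^{\be\ga}S(u)(\p_\be u,\p_\ga u)\big)\|_{L^2}\,\|\partial^\alpha\partial_t u\|_{L^2} + (\text{lower order metric terms}).
\end{align*}
The nonlinearity is estimated by the Leibniz/Moser inequalities for products in Sobolev spaces: distributing the $\ell$ derivatives, the worst terms are those where at most one factor carries the top number of derivatives. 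One factor of $du$ is always estimated in $L^8_x$ (this is the term that must be integrable in time!), another in $\dot H^1_x\hookrightarrow L^4_x$ or via Gagliardo–Nirenberg interpolation between $\|du\|_{\dot H^1}$ and $\|u\|_{H^s}$, and the remaining high-derivative factor in $L^2_x$, so that one arrives at
\begin{align*}
\frac{d}{dt}\,\mathcal E_s(t) \lesssim \big(\|du(t)\|_{L^8_x}^2 + \|du(t)\|_{\dot H^1_x}^2 + 1\big)\,\mathcal E_s(t),
\end{align*}
where $\mathcal E_s(t)\simeq \|(u(t),\dot u(t))\|_{\dot H^s\times\dot H^{s-1}}^2$ (plus the lower Sobolev norms, which are controlled inductively, or folded in via $H^s = L^2\cap \dot H^s$ once one also runs the undifferentiated energy estimate to control the low frequencies). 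Since $\|du\|_{L^2_t([0,T];L^8_x)}^2 \lesssim \ep_0^2$ by \eqref{du global} and $\|du\|_{L^\infty_t([0,T];\dot H^1_x)}\lesssim\ep_0$, Gronwall's inequality on $[0,T]$ yields $\sup_{[0,T]}\mathcal E_s(t)\le C(T,\ep_0)\,\mathcal E_s(0)$, i.e. \eqref{H^s times H^{s-1}}. The induction on $s$ handles the fact that bounding the top-order term may require $L^\infty_x$ or $L^4_x$ control of lower-order derivatives of $du$, which follows from the $H^{s-1}$ bound already established at the previous step together with Sobolev embedding on $\R^4$.

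The main obstacle is organizing the nonlinear estimate so that every term is genuinely controlled by the \emph{time-integrable} quantity $\|du\|_{L^2_tL^8_x}$ times a top-order energy, rather than by a non-integrable $L^\infty_t$ quantity: a naive Moser estimate would produce $\|du\|_{L^\infty_x}^2\mathcal E_s$, and $\|du\|_{L^\infty_t L^\infty_x}$ is not available (indeed $\dot H^2(\R^4)\not\hookrightarrow L^\infty$). The resolution is exactly the one used in Section \ref{a priori estimates}: on $\R^4$ one has $\dot H^2\hookrightarrow L^8$ (and the sharper Lorentz embedding $\dot B^{1/6}_{6,2}\hookrightarrow L^{8,2}$), so the two ``spare'' factors of $du$ in the quadratic nonlinearity can always be placed in $L^8_x$ — one of them picked up from the Strichartz norm $\|du\|_{L^2_tL^8_x}$ and the other from the energy $\|du\|_{L^\infty_t\dot H^1_x}$ via $\dot H^1\hookrightarrow L^4$, with the remaining high-derivative factor in $L^2_x$; higher commutator terms and the variable-coefficient corrections from $g$ (which by \eqref{g}–\eqref{p^k g} have all derivatives in the appropriate spaces) are then strictly lower order and absorbed in the same way. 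Once the estimate is in the form $\frac{d}{dt}\mathcal E_s \lesssim \big(\|du\|_{L^8_x}^2 + 1\big)\mathcal E_s$ with a time-integrable coefficient, Gronwall closes the argument.
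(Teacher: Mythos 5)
Your high-level plan (energy estimate at level $s$, Gronwall, with the global $L^2_t L^8_x$ bound from \eqref{du global} as the time-integrable coefficient) matches the paper's. But there is a genuine gap in the way you set up the nonlinear estimate, and it stems from working with the \emph{extrinsic} equation \eqref{wm extrinsic local} and flat derivatives $\p^\alpha$.

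In the extrinsic picture the nonlinearity is $S(u)(\p u,\p u)$, which is \emph{quadratic} in $du$. After applying $\p^\alpha$ with $|\alpha|=\ell$, the worst term in the Leibniz expansion is $S(u)(\p^{\ell+1}u,\p u)$: one factor at top order, and exactly \emph{one} spare low-order factor of $du$. Your accounting ``one factor in $L^8_x$, one in $L^4_x$, the top-order one in $L^2_x$'' requires \emph{three} factors (the H\"older exponents $\tfrac18+\tfrac14+\tfrac12$ do not even sum to $\tfrac12$, so it is really the cubic structure you have in mind). With only two factors you are forced into either $\norm{\p^{\ell+1}u}_{L^2}\norm{\p u}_{L^\infty}$, which cannot be closed since $\dot H^2(\R^4)\not\hookrightarrow L^\infty$ and $\norm{du}_{L^1_tL^\infty_x}$ is not among the a priori bounds, or into $\norm{\p^{\ell+1}u}_{L^{8/3}}\norm{\p u}_{L^8}$, where the top-order factor sits in $L^{8/3}$ rather than $L^2$ and hence is $1/2$ derivative above the energy $\mathcal E_s$ (and Gagliardo--Nirenberg will not trade it down using only $\dot H^1$ and $L^8$ control of $du$). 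So the Moser/Leibniz estimate does not produce the coefficient $(\norm{du}_{L^8_x}^2+1)$ you need for Gronwall.

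The paper avoids this precisely by \emph{not} differentiating the extrinsic equation with $\p^\alpha$. Instead it differentiates the intrinsic wave map equation \eqref{wm} \emph{covariantly}, working with $q=du$ in the Coulomb frame and the pullback connection $D$ on $u^*TN$. The commutator $[D_\gamma,D_\alpha]=F_{\gamma\alpha}=R(u)(q_\gamma,q_\alpha)$ then turns the principal nonlinearity into the \emph{cubic} curvature term $F_{\gamma\alpha}\eta^{\alpha\beta}q_\beta\sim R(u)(q,q)q$ (see \eqref{1 derivative}), while the dangerous quadratic second-derivative terms that would appear extrinsically are absorbed into the covariant wave operator $D_tD_t-\frac{1}{\sqrt{|g|}}D_\alpha(\sqrt{|g|}g^{\alpha\beta}D_\beta\cdot)$ on the left-hand side. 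It is exactly this cubic structure that makes Lemma~\ref{D^l F} work: $\norm{D^{\ell}(\eta F q)}_{L^2}\lesssim\bigl(\sum_k\norm{D^kq}_{L^2}\bigr)\norm{q}_{L^8}^2$ has \emph{two} powers of $L^8$, one drawn from the Strichartz bound and one from $\dot H^1\hookrightarrow L^4$/$\dot H^2 \hookrightarrow L^8$, with the top-order factor in $L^2$. Your plan would need to import this gauge-theoretic reformulation; as written, the extrinsic energy estimate does not close. (The subinterval-partitioning step you mention is also unnecessary once the Gronwall coefficient is $\norm{q}_{L^8}^2+1$, but that is cosmetic; the substantive issue is the missing cubic structure.)
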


To prove Proposition \ref{high reg}, we begin by differentiating ~\eqref{wm} covariantly. 
Let $1\le \ga \le 4$ be a space index and let $q=du$ be the representation of $du$ in the Coulomb frame. Then, recalling that $D_{\al}D_{\be} - D_{\be}D_{\al} =F_{\al \be}$, we have 

\begin{align*} 
0= &\, D_{\ga}\left( \frac{1}{\sqrt{\abs{\eta}}} D_{\al}( \sqrt{\abs{\eta}} \eta^{\al \be} q_{\be})\right)\\
\\
= &-D_{\ga}\left( D_t q_t\right) + D_{\ga}\left( \frac{1}{\sqrt{\abs{g}}} D_{\al}( \sqrt{\abs{g}} g^{\al \be} q_{\be} )\right)\\
\\
=& -D_tD_{t}  q_{\ga}  +    \frac{1}{\sqrt{\abs{g}}} D_{\al} (\sqrt{\abs{g}} g^{\al \be} D_{\ga}q_{\be} )\\
\\
&+ F_{\ga \al} \eta^{\al \be} q_{\be}  + \p_{\ga}(g^{\al \be})D_{\al} q_{\be}+  
\p_{\ga}\left(\frac{1}{\sqrt{\abs{g}}} \p_{\al}(\sqrt{\abs{g}} g^{\al \be})\right) q_{\be}
\end{align*}
This implies  that $q$ satisfies the equation

\begin{multline}\label{1 derivative}
D_tD_tq_{\ga} -  \frac{1}{\sqrt{\abs{g}}} D_{\al} (\sqrt{\abs{g}} g^{\al \be} D_{\be}q_{\ga} )
\\  
= F_{\ga \al} \eta^{\al \be} q_{\be}  + \p_{\ga}(g^{\al \be})D_{\al} q_{\be} -  \p_{\ga}\left(g^{\al \rho} \Gamma^{\be}_{\al \rho}\right) q_{\be}
\end{multline}

Pairing this equation with $g^{\ga \de}D_t q_{\de}$ as sections of $u^*TN\to M$ and integrating over $M$ gives 

\begin{align*}
& \int_M \ang{  D_tD_tq_{\ga} -  \frac{1}{\sqrt{\abs{g}}} D_{\al} (\sqrt{\abs{g}} g^{\al \be} D_{\ga}q_{\be} ), \, g^{\ga \de}D_t q_{\de}} \sqrt{\abs{g}} \, dx 
\\
&=\,  \int_M \ang{  F_{\ga \al} \eta^{\al \be} q_{\be}, \, g^{\ga \de}D_t q_{\de}} \sqrt{\abs{g}} \, dx + \int_M \ang{ \p_{\ga}(g^{\al \be})D_{\al} q_{\be} , \, g^{\ga \de}D_t q_{\de}} \sqrt{\abs{g}} \, dx \\
&\quad- \int_M \ang{  \p_{\ga}\left(g^{\al\rho} \Gamma^{\be}_{\al \rho}\right) q_{\be}, \, g^{\ga \de}D_t q_{\de}} \sqrt{\abs{g}} \, dx 
\end{align*}
Integrating the second term on the left by parts gives 

\begin{align*}
\frac{1}{2} \frac{d}{dt} \|Dq\|_{L^2}^2 &= - \int_M g^{\al \be}\p_{\al}(g^{\ga \de}) \ang{    D_{\ga}q_{\be}  , \, D_t q_{\de}} \sqrt{\abs{g}} \, dx\\
 &\quad+ \int_M g^{\al \be}g^{\ga \de}\ang{    D_{\ga}q_{\be}  , \, F_{\al t} q_{\de}} \sqrt{\abs{g}} \, dx \\
 &\quad+ \int_M \eta^{\al \be} g^{\ga \de}\ang{  F_{\ga \al} q_{\be}, \, D_t q_{\de}} \sqrt{\abs{g}} \, dx\\ 
 &\quad + \int_M  \p_{\ga}(g^{\al \be})g^{\ga \de}\ang{D_{\al} q_{\be} , \, D_t q_{\de}} \sqrt{\abs{g}} \, dx \\
&\quad- \int_M \p_{\ga}\left(g^{\al\rho} \Gamma^{\be}_{\al \rho}\right)g^{\ga \de}\ang{   q_{\be}, \, D_t q_{\de}} \sqrt{\abs{g}} \, dx 
\end{align*}
where we define
\begin{align}
\|Dq\|_{L^2}^2 &:=  \int_M g^{\ga \de} \ang{ D_{\ga}q_t, \, D_{\de}q_{t}} \sqrt{\abs{g}} \, dx\\
&\quad + \int_M g^{\al \be}g^{\ga \de}\ang{    D_{\ga}q_{\be}  , \, D_{\al}q_{\de}} \sqrt{\abs{g}} \, dx \notag
\end{align}
 Hence,

\begin{align}\label{dt Dq}
\frac{1}{2} \frac{d}{dt} \|Dq\|_{L^2}^2 &\lesssim  \|\p g\|_{L^{\infty}} \|Dq\|_{L^2}^2 + \|F\|_{L^4}\|q\|_{L^4} \|Dq\|_{L^2} \\ \notag
&\quad + \|\p^2 g\|_{L^4} \|q\|_{L^4} \|Dq\|_{L^2}\\  \notag
\\ \notag
&\lesssim \|Dq\|_{L^2}^2 \|q\|_{L^8}^2 + \|Dq\|_{L^2}^2
\end{align}
Integrating in time gives  

\begin{align*}
\|Dq(t)\|_{L^2}^2 \le \|Dq(0)\|_{L^2}^2  + C \int_0^t \|Dq(s)\|_{L^2}^2 (\|q(s)\|_{L^8}^2 +1) \, ds
\end{align*}
Hence by Gronwall's inequality we  have 

\begin{align}\label{Dq estimates}
\|Dq(t)\|_{L^2}^2 &\le \|Dq(0)\|_{L^2}^2 \textrm{exp}\left( C \int_0^t (\|q(s)\|_{L^8}^2 +1) \, ds\right)\\ \notag \\ \notag
&\le \|Dq(0)\|_{L^2}^2 \textrm{exp}\left( C(\|q\|_{L^2L^8}^2 +t) \right)\\ \notag \\ \notag
&\le \|Dq(0)\|_{L^2}^2 \textrm{exp}\left( C(\ep_0 +t)\right)
\end{align}
The last inequality follows from the global a priori bounds, ~\eqref{du global}, proved in the previous section. 

As explained in Section ~\ref{Equivalence of Norms}, see   ~\eqref{norm equiv} and ~\eqref{covariant sobolev},  the inequality in ~\eqref{Dq estimates} is equivalent to a bound on $(u, \dot{u})$ in ~$\dot{H}^2(M;N)\times \dot{H}^1(M;N)$. We thus obtain a bound on $(u, \dot{u})$ in $H^2(M;N)\times H^1(M;N)$ by combining the above with the conservation of energy derived in Section ~\ref{Extrinsic Definition}, and the simple $L^2$ estimates obtained by the fundamental theorem of calculus and Minkowksi's inequality

\begin{align}\label{L^2}
\|u(t)\|_{L^2} \le \|u(0)\|_{L^2} + t \|\p_t u(t)\|_{L^2}
\end{align}

\begin{rem}
Of course, we already have proved an even stronger result than ~\eqref{Dq estimates} in the previous section where we showed that, in fact, $\|q(t)\|_{\dot{H^1}} \lesssim \ep_0$ for any time $t$ where the wave map $u$ is defined. We have gone through the trouble in proving ~\eqref{Dq estimates} here in order to establish the technique required to prove bounds on higher derivatives of $q$ below. 
\end{rem}

To obtain bounds in  $H^3(M;N)\times H^2(M;N)$ and in  $H^4(M;N)\times H^3(M;N)$ we proceed in exactly the same manner as above, differentiating ~\eqref{1 derivative} two more times and obtaining wave equations for $D_{\ka} q_{\de}$ and for $D_{\mu}D_{\ka} q_{\de}$. Roughly, these are of the form

\begin{align}\label{2 derivative}
D_t D_t D_{\ka} q_{\ga} - \frac{1}{\sqrt{\abs{g}}} D_{\al}\left( \sqrt{\abs{g}} g^{\al \be} D_{\be} D_{\ka} q_{\ga}\right) &= D_{\ka}(\eta^{\al \be} F_{\ga \al} q_{\be})\\  \notag
&\quad + \textrm{lower order terms}
\end{align}
and
\begin{multline}\label{3 derivative}
D_t D_t D_{\mu}D_{\ka} q_{\ga} - \frac{1}{\sqrt{\abs{g}}} D_{\al}\left( \sqrt{\abs{g}} g^{\al \be} D_{\be} D_{\mu}D_{\ka} q_{\ga}\right) =\\= D_{\mu}D_{\ka}(\eta^{\al \be} F_{\ga \al} q_{\be}) 
+ \textrm{lower order terms} 
 \end{multline}
 Proceeding as above, we pair ~\eqref{2 derivative} with $g^{\ka \io} g^{\ga \de}D_t D_{\io} q_{\de}$, and we pair ~\eqref{3 derivative} with $g^{\mu \nu} g^{\ka \io} g^{\ga \de}D_{t} D_{\nu } D_{\io} q_{\de}$ and integrate over $M$ to obtain for $\ell = 1, 2$
 
 \begin{align}\label{D^2q and D^3q}
 \frac{1}{2}\frac{d}{dt} \|D^{\ell+1}q\|_{L^2}^2 \lesssim \sum_{k=1}^{\ell+1} \|D^kq\|_{L^2}^2 + \|D^{\ell} (\eta F q)\|_{L^2} \|D^{\ell+1} q\|_{L^2}
 \end{align}
We claim that \eqref{D^2q and D^3q} implies the estimate 

 \begin{align}\label{D2 and D3}
  \frac{1}{2}\frac{d}{dt} \|D^{\ell+1}q\|_{L^2}^2 \lesssim \sum_{k=1}^{\ell+1} \|D^kq\|_{L^2}^2 +\left(\sum_{k=1}^{\ell+1} \|D^kq\|_{L^2}^2\right) \|q\|_{L^8}^2 
  \end{align}
In order to deduce \eqref{D2 and D3} from \eqref{D^2q and D^3q}, we need the following lemma:

\begin{lem}\label{D^l F}For any time $t$ and for $\ell=1, 2$ we have 
\begin{align}
\|D^{\ell}(\eta F q)\|_{L^2} \lesssim \sum_{k=1}^{\ell+1} \|D^k q\|_{L^2} \|q\|_{L^8}^2
\end{align}
\end{lem}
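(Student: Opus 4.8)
The plan is to estimate $\|D^{\ell}(\eta F q)\|_{L^2}$ for $\ell = 1, 2$ by expanding the covariant derivatives using the Leibniz rule and then controlling each resulting term by a product of $L^p$ norms, chosen so that one factor is always a high-regularity term $\|D^k q\|_{L^2}$ with $1 \le k \le \ell+1$ and the remaining factors combine to give $\|q\|_{L^8}^2$. First I would recall that $\eta$ is a fixed metric with bounded derivatives (under \eqref{g}--\eqref{p^2 g}), that $F_{\ga\al} = R(u)(\p_\ga u, \p_\be u)$ so $F$ is quadratic in $q = du$ with coefficients ($R(u)$ and its covariant derivatives) that are bounded by the bounded-geometry assumption on $N$, and that $D$ acting on $R(u)$ produces factors of $q$ as well. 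Schematically, $F \sim R(u)(q, q)$, so $D^k F$ is a sum of terms of the form $(D^{a}R)(u) \cdot q^{(b)} \cdot D^{c_1} q \cdot D^{c_2} q$ with $c_1 + c_2 + b = k$ (where $q^{(b)}$ denotes $b$ extra undifferentiated or low-order factors of $q$ coming from differentiating $R(u)$), and $D^{\ell}(\eta F q)$ is a further sum after applying one more Leibniz expansion against the factor $q$ and the constant-coefficient-like $\eta$.

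For $\ell = 1$: $D(\eta F q)$ expands to terms like $\eta (DF) q$ and $\eta F (Dq)$. The term $\eta F (Dq) \sim R(u)(q,q)(Dq)$ is estimated by $\|q\|_{L^8}^2 \|Dq\|_{L^2}$ via Hölder with exponents $\frac18 + \frac18 + \frac12 + \frac14 = 1$ — wait, that over-counts; more precisely $\|F\|_{L^4}\|Dq\|_{L^4} \lesssim \|q\|_{L^8}^2 \|Dq\|_{L^4} \lesssim \|q\|_{L^8}^2 \|D^2 q\|_{L^2}$ using Sobolev $\dot W^{1,2}(\R^4) \hookrightarrow L^4$ and $\|F\|_{L^4} \lesssim \|q\|_{L^8}^2$; the term $\eta (DF) q$ with $DF \sim (DR)(u)(q,q,q) + R(u)(Dq, q)$ is handled by $\|q\|_{L^8}^3 \|q\|_{L^8}$ (all five factors in $L^8$ on $\R^4$? that fails on dimension count) — so instead I would put the highest-order factor in $L^2$ and the rest in $L^8$, e.g. $\|R(u)(Dq,q) \cdot q\|_{L^2} \lesssim \|Dq\|_{L^4}\|q\|_{L^8}\|q\|_{L^8} \lesssim \|D^2 q\|_{L^2}\|q\|_{L^8}^2$ (using $\frac14 + \frac18 + \frac18 = \frac12$), and similarly the quartic-in-$q$ piece $\lesssim \|q\|_{L^8}^3 \|q\|_{L^{8/?}}$ which one bounds crudely using the already-established bound $\|q\|_{\dot H^1} \lesssim \ep_0$ together with Gagliardo--Nirenberg to trade one factor of $q$ for $\|D^2 q\|_{L^2}$ times a small constant. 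For $\ell = 2$ the combinatorics are the same, just with one more derivative distributed; the worst case is when all derivatives land on a single factor giving $\|D^3 q\|_{L^2}\|q\|_{L^8}^2$, and all intermediate cases are controlled by interpolating between $\|Dq\|_{L^2}$, $\|D^2 q\|_{L^2}$, $\|D^3 q\|_{L^2}$, and $\|q\|_{L^8}$ via Gagliardo--Nirenberg inequalities on $\R^4$.

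The main obstacle is the bookkeeping of the multilinear terms — ensuring that in every term produced by the double Leibniz expansion one can assign Lebesgue exponents summing to $\tfrac12$ so that exactly one factor carries a $D^k q$ with $k \le \ell + 1$ and the remaining factors are absorbed into $\|q\|_{L^8}^2$ (possibly after using $\|q\|_{L^8}\lesssim \|q\|_{\dot H^1}\lesssim \ep_0 \lesssim 1$ to discard surplus low-order factors of $q$, and possibly after a Gagliardo--Nirenberg interpolation to convert an intermediate-order $\|D^j q\|_{L^p}$ into $\|D^k q\|_{L^2}$-type quantities). Since we are on $\R^4$, the relevant Sobolev embeddings ($\dot H^1 \hookrightarrow L^4$, $\dot H^2 \hookrightarrow L^\infty$ fails but $\dot H^2 \hookrightarrow \dot W^{1,4}\hookrightarrow L^\infty$ also fails — rather $\dot W^{2,2} \hookrightarrow L^\infty$ is false in $d=4$, so one must avoid $L^\infty$ and instead always keep a genuine $L^2$ slot) dictate that the highest-order factor must sit in $L^2$; this is consistent with the stated conclusion. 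Once the exponent assignment is verified term-by-term, the lemma follows, and plugging it into \eqref{D^2q and D^3q} immediately gives \eqref{D2 and D3}.
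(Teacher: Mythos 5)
Your proposal follows essentially the same route as the paper: expand via Leibniz, recognize $F\sim R(u)(q,q)$ so differentiating $F$ (and $R(u)$) produces more factors of $q$ and $Dq$, then assign Hölder exponents on $\R^4$ so the highest-order factor lands in $L^2$ and use Gagliardo--Nirenberg (multiplicative Sobolev) to convert intermediate products like $\|q^3\|_{L^4}$ into $\|D^{\ell+1}q\|_{L^2}\|q\|_{L^8}^2$, discarding surplus low-order factors of $q$ via the a priori bound $\|q\|_{\dot H^1}\lesssim\ep_0$. The self-corrections in your write-up end up at exactly the exponent assignments the paper uses (e.g.\ $\|F\|_{L^4}\|\p q\|_{L^4}$ and $\|\p q\|_{L^4}\|q\|_{L^8}^2$ for $\ell=1$), so the argument is sound and matches the paper's proof in substance.
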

\begin{proof}In what follows we will freely use the equivalence of norms explained in Section ~\ref{Equivalence of Norms}. For $\ell=1$, we have $\p(\eta F q) =  \p\eta F q + \eta \p F q + \eta F \p q$. Schematically, recall that we have $F= R(u)(q,q)$ and hence $\p F = (\p R(u))(q,q,q) + 2 R(u)(\p q, q)$. Hence we have 

\begin{align}\label{ p eta F q}
\|\p (\eta F q) \|_{L^2} &\lesssim \|\p \eta\|_{L^4} \|q^3\|_{L^4} + \|\p R(u)\|_{L^{\infty}} \|q\|_{L^4} \|q^3\|_{L^4}\\ \notag
&\quad + \|R(u)\|_{L^{\infty}} \|\p q\|_{L^4} \|q^2\|_{L^4}\\ \notag
\\ \notag
&\lesssim \|q^3\|_{L^4} + \|q\|_{\dot{H}^1} \|q^3\|_{L^4} + \|D^2 q\|_{L^2} \|q\|_{L^8}^2
\end{align}
Finally we claim that $ \|q^3\|_{L^4} \lesssim  \|D^2q\|_{L^2}\|q\|^2_{L^8}$. This follows from the multiplicative Sobolev inequality, see ~\cite[pg. 24]{Fri}. Indeed,

\begin{align}\label{q^3}
\|q^3\|_{L^4} \lesssim \prod_{i=1}^3 \|q\|_{L^{p_i}}& \lesssim  \prod_{i=1}^3 \|D^2q\|_{L^2}^{1-\theta_i} \|q\|_{L^8}^{\theta_i} \lesssim  \|D^2q\|_{L^2}\|q\|^2_{L^8}
\end{align}
as long as we set $\frac{1}{p_1} + \frac{1}{p_2} + \frac{1}{p_3} =\frac{1}{4}$, $\frac{1}{p_i} = \frac{\theta_i}{8}$ and $\theta_1 + \theta_2 + \theta_3 =2$. For example, we can set $p_i= 12$ and $\theta_i= \frac{2}{3}$ for $i= 1, 2, 3$. 

For $\ell=2$ we have 
\begin{align*}
\p^2(\eta F q) = \p^2 \eta Fq + \eta \p^2 F q + \eta F \p^2 q + 2 \p \eta  \p F q + 2 \p \eta  F \p q +  2 \eta \p F \p q
\end{align*}
And we have 
\begin{align*}
\p^2 F = (\p^2R(u)) (q,q,q,q) + 5(\p R(u))(\p q, q, q) + 2 R(u)(\p^2 q, q) + 2 R(u)(\p q, \p q)
\end{align*} 
Hence,

\begin{align}
\| \p^2( \eta F q)\|_L^2  &\lesssim \|\p^2 \eta Fq\|_{L^2} + \| \p \eta  \p F q\|_{L^2} + \| \p \eta  F \p q\|_{L^2}\label{p eta}\\\notag
\\
&\quad +\| \eta F \p^2 q\|_{L^2} +  \| \eta \p F \p q\|_{L^2} + \|\eta \p^2 F q\|_{L^2} \label{eta}
\end{align}
The first three terms on the right-hand side of ~\eqref{p eta} all have derivatives hitting $\eta$ and can be controlled as follows

\begin{align*}
\|\p^2 \eta Fq\|_{L^2} + \| \p \eta  \p F q\|_{L^2} + \| \p \eta  F \p q\|_{L^2} &\lesssim \|\p^2 \eta\|_{L^4} \|Fq\|_{L^4} + \|\p \eta\|_{L^{\infty}} \|\p F q\|_{L^{2}}\\
&\quad + \|\p \eta\|_{L^{\infty}} \|F\p q\|_{L^2}\\
\\
&\lesssim \|q^3\|_{L^4} + \|\p R(u)\|_{L^{\infty}} \|q\|_{L^4} \|q^3\|_{L^4} \\
&\quad + \|R(u)\|_{L^{\infty}} \|\p q\|_{L^4} \|q^2\|_{L^4}\\
&\quad + \|\p q\|_{L^4}\|q^2\|_{L^4}\\
\\
&\lesssim \|D^2q\|_{L^2} \|q\|_{L^8}^2
\end{align*}
where the last line is deduced via the same argument as in ~\eqref{ p eta F q} and ~\eqref{q^3}. To estimate the first term in ~\eqref{eta} we observe that

\begin{align*}
\|\eta F \p^2 q\|_{L^2} \lesssim \|F\|_{L^4} \|\p^2 q\|_{L^4} \lesssim \|D^3q\|_{L^2} \|q\|^2_{L^8}
\end{align*}
For the last two terms in ~\eqref{eta} we have 

\begin{align}
 \| \eta \p F \p q\|_{L^2} + \|\eta \p^2 F q\|_{L^2} &\lesssim \|(\p R)(u)\|_{L^{\infty}}\|q^3 \p q\|_{L^2}\\ \notag
 &\quad + \|R(u)\|_{L^{\infty}} \|q (\p q)^2\|_{L^2}\label{p F p^2 F}\\ \notag
 & \quad+ \|R(u)\|_{L^{\infty}} \|q^2 \p^2 q\|_{L^2} \\ \notag
 &\quad+ \|(\p^2 R)(u)\|_ {L^{\infty}} \|q^5\|_{L^2}\\ \notag
  \\ \notag
  &\lesssim \|q\|_{L^4} \|q\|^2_{L^{16}} \|\p q\|_{L^8} + \|q\|_{L^8} \|\p q\|_{L^{\frac{16}{3}}}^2\\
  &\quad + \|q\|_{L^8}^2 \|\p^2 q\|_{L^4} + \|q\|_{L^8}^2 \|q\|_{L^{12}}^3 \notag
 \end{align}
The multiplicative Sobolev inequality then implies 
 
 \begin{align*}
 \|q\|_{16} &\lesssim \|D^3q\|_{L^2}^{\frac{1}{3}} \|q\|_{L^8}^{\frac{2}{3}}\\
 \|\p q\|_{L^8}& \lesssim \|D^3q\|_{L^2}^{\frac{1}{3}} \|q\|_{L^8}^{\frac{2}{3}}\\
  \|\p q\|_{L^{\frac{16}{3}}} &\lesssim \|D^3q\|_{L^2}^{\frac{1}{2}} \|q\|_{L^8}^{\frac{1}{2}} \\
 \| q\|_{L^{12}} &\lesssim \|D^3q\|_{L^2}^{\frac{1}{3}} \|q\|_{L^4}^{\frac{2}{3}}\\
 \end{align*}
 Plugging these into ~\eqref{p F p^2 F}, and using Sobolev embedding followed by the a priori bounds  $\| q\|_{L^{\infty} \dot{H}^1} \lesssim \ep_0$, we get

 \begin{align*}
  \| \eta \p F \p q\|_{L^2} + \|\eta \p^2 F q\|_{L^2} &\lesssim \|q\|_{\dot{H}^1}\|D^3q\|_{L^2} \|q\|_{L^8}^2
  \\
  \\
  &\lesssim \|D^3q\|_{L^2} \|q\|_{L^8}^2
 \end{align*}
  Putting this all together we conclude
  
  \begin{align*}
  \|\p^2(\eta F q)\|_{L^2} \lesssim \left(\|D^2 q\|_{L^2} + \|D^3 q\|_{L^2}\right) \|q\|_{L^8}^2
  \end{align*}
  as desired. 
  \end{proof}

 Now, inserting the conclusion in  Lemma ~\ref{D^l F} into ~\eqref{D^2q and D^3q} we have  for $\ell=1,2$
 
 \begin{align}
  \frac{1}{2}\frac{d}{dt} \|D^{\ell+1}q\|_{L^2}^2 \lesssim \left(\sum_{k=1}^{\ell+1} \|D^kq\|_{L^2}^2 \right)\left(\|q\|_{L^8}^2 + 1 \right) 
  \end{align}
Together with ~\eqref{dt Dq} this implies for $\ell=1, 2$ that 

\begin{align}\label{dt D^2 and D^3}
\frac{1}{2}\frac{d}{dt} \sum_{k=1}^{\ell+1} \|D^{k}q\|_{L^2}^2 \lesssim \left(\sum_{k=1}^{\ell+1} \|D^kq\|_{L^2}^2 \right)\left(\|q\|_{L^8}^2 + 1 \right) 
  \end{align}
Integrating in time, applying Gronwall's inequality and using the a priori estimates $\|q\|_{L^2L^8} \lesssim ~\ep_0$ gives 

\begin{align}\label{D^2 and D^3}
\sum_{k=1}^{\ell+1} \|D^{k}q(t)\|_{L^2}^2 \le \left(\sum_{k=1}^{\ell+1} \|D^{k}q(0)\|_{L^2}^2\right) \textrm{exp}\left( C( \ep_0 + t) \right)
\end{align}
for $\ell=1, 2$. This implies that the $H^3(M;N)\times H^2(M;N)$ (resp.  $H^4(M;N)\times H^3(M;N)$) norm of the solution $(u, \dot{u})$ remains finite for all time assuming the data $(u_0, u_1)$ is bounded in $H^3(M;N)\times H^2(M;N)$, (resp. $H^4(M;N)\times H^3(M;N)$). 

To deal with higher derivatives, $s \ge5$, we note that ~\eqref{D^2 and D^3} implies that $q(t)\in H^3\hookrightarrow  L^{\infty}_x$, and hence we can bootstrap the preceding argument, in particular Lemma ~\ref{D^l F}, to all higher derivatives. For the global existence proof to come in the next section, we only need to do the case $s=5$ as we have a local well-posedness theory for ~\eqref{wm local} at this regularity, see for example \cite[Chapter 5]{Sha-Stru GWE}.

\vspace{\baselineskip}

\section{Existence \& Proof of Theorem \ref{main thm}} \label{Existence}
In this section, we  the complete the proof of  Theorem ~\ref{main thm}. We begin by establishing the existence statement in Theorem \ref{main thm}.  The argument here follows exactly as in ~\cite{Shat-Stru WM}. As explained in Section ~\ref{density}, we can find a sequence of smooth data $ (u_0^k, u_1^k) \in C^{\infty} \times C^{\infty}(M; TN)$ such that $(u_0^k, u_1^k) \to (u_0, u_1)$ in $H^2\times H^1 (M; TN)$ as $k\to \infty$. Using the high regularity, local well-posedness theory, we can find smooth local solutions $u^k$ to the Cauchy problem ~\eqref{wm} with data $(u^k_0, u^k_1)$ satisfying 

\begin{align}\label{smooth data}
\|u^k_0\|_{\dot{H}^2} + \|u_1^k\|_{\dot{H}^1} < \ep_0
\end{align}
for large enough $k$. Then, by the a priori bounds in Proposition ~\ref{global est} and the regularity results in Proposition ~\ref{high reg}, these local solutions $u^k$ can be extended as smooth solutions of ~\eqref{wm} for all time satisfying the uniform in $k$, global-in-time estimates

\begin{align}\label{u^k}
\|du^k\|_{L^{\infty}\dot{H}^1} + \|du^k\|_{L^2L^8} \lesssim \|u_0^k\|_{\dot{H}^2} + \|u_1^k\|_{\dot{H}^1} \lesssim \ep_0
\end{align}
for large enough $k$.  To see this, suppose that the smooth local solution $u^k $ exists on the time interval $[0, T)$. Then, by Proposition ~\ref{global est} and Proposition ~\ref{high reg} we have, say, that the $H^{5} \times H^{4}$ norm of $(u^k(T), \dot{u}^k(T))$ is finite. Hence, we can apply the high regularity local well-posedness theory again to the Cauchy problem with data $(u^k(T), \dot{u}^k(T))$ obtaining a positive time of existence, $T_1$. By the uniqueness theory, this solution agrees with $u^k,$ thereby extending $u^k$ to the interval $[0, T+T_1)$. This implies that $u^k$ is, in fact a global solution, as it can always be extended.

Now, by ~\eqref{u^k}, we can find a subsequence, $u^k$ such that $u^k \rightharpoondown u$ weakly in $H^2_{\textrm{loc}}$. We also have 

\begin{align} \label{global u}
\|du\|_{L^{\infty}\dot{H}^1} + \|du\|_{L^2L^8} \lesssim \|u_0\|_{\dot{H}^2} + \|u_1\|_{\dot{H}^1} \lesssim \ep_0
\end{align}
By Rellich's theorem, we can find a further subsequence so that $du^k \to du$  pointwise almost everywhere. It follows that $u$ is a global solution to ~\eqref{wm} with data $(u_0, u_1)$.  We have now completed the proof of Theorem \ref{main thm}. We summarize the entire proof below.

\begin{proof}[Proof of Theorem \ref{main thm}]
In Proposition ~\ref{global est} we established the global a priori bounds \eqref{global estimates} for smooth wave maps $(u, \dot{u})$ with initial data $(u_0, u_1)$ that satisfies \eqref{small energy}. Now, given data $(u_0, u_1) \in H^2\times H^1((\R^4, g), TN)$ satisfying ~\eqref{small energy} the above argument concludes the existence of a global wave map $(u, \dot{u}) \in C^0( \R; H^2 \times H^1)$. Proposition ~\ref{global est}, and in particular the global control of the $L^2_tL^8_x$ norm of $du$ allowed us to deduce the global regularity result, Proposition ~\ref{high reg}, which not only drives the existence proof above, but also shows that higher regularity of the data is preserved. Finally, the global control of the $L^2_tL^8_x$ norm of $du$ validates the uniqueness proof in Section ~\ref{uniqueness}.
\end{proof}

\vspace{\baselineskip}

\section{Linear Dispersive Estimates for Wave Equations on a Curved Background}\label{Strichartz Estimates}

In this section we outline the linear dispersive estimates for variable coefficient wave equations established by Metcalfe and Tataru in ~\cite{Met-Tat}.  We review a portion of the argument in ~\cite{Met-Tat} with the necessary extensions needed to prove ~\eqref{Besov Strichartz}. It is suggested that the reader refer to ~\cite{Met-Tat} when reading this section as here we detail only the parts where changes have been made to suit our needs.  In order to facilitate this joint reading we will try to use as much of the same notation as possible. We begin with a brief summary.

We say that $(\rho, p, q)$ is a Strichartz pair if $2\le p \le \infty$, $2\le q< \infty$, and if $(\rho, p, q)$ satisfies the following two conditions

\begin{gather}\label{p,q, rho}
\frac{1}{p} + \frac{d}{q} = \frac{d}{2} - \rho\\ \notag
\\
\frac{1}{p} + \frac{d-1}{2q} \le \frac{d-1}{4}\label{p,q}
\end{gather}
with the exception of the forbidden endpoint $(1,2, \infty)$, if $d=3$.

In ~\cite{Met-Tat}, Metcalfe and Tataru prove global Strichartz estimates for variable coefficient wave equations of the form 

\begin{align}\label{var wave}
Pv & = f\\ \notag
v[0] &= (v_0, v_1)
\end{align}
where $P$ is the second order hyperbolic operator, 
\begin{align}
P(t,x, \partial) = -\p_t^2 + \partial_{\al}( a^{\al \be}(x) \partial_{\be}) + b^{\al}(x) \partial_{\al} + c(x)
\end{align}
In fact, in ~\cite{Met-Tat}  time-dependent coefficients are considered as well, but we will restrict our attention to the time-independent case for our purposes. Here we assume that the matrix $a$ is positive definite and the coefficients $a$, $b$, $c$ satisfy the weak asymptotic flatness conditions

\begin{align}\label{a flat}
\sum_{j\in \Z} \sup_{\abs{x} \sim 2^j} \abs{x}^2 \abs{\p^2 a(x)} + \abs{x}\abs{\p a(x)} + \abs{ a(x) - g_0} \le \ti{\ep}
\end{align}
where $g_0$ denotes the diagonal matrix $\textrm{diag}(1, \dots, 1)$.  And 

\begin{align}\label{b flat}
\sum_{j\in \Z} \sup_{\abs{x} \sim 2^j} \abs{x}^2 \abs{\p b(x)} + \abs{x}\abs{ b(x)}  \le \ti{\ep}
\end{align}

\begin{align}\label{c flat}
\sum_{j\in \Z} \sup_{\abs{x} \sim 2^j} \abs{x}^4 \abs{ c(x)}^2   \le \ti{\ep}
\end{align}
 Given the assumptions in ~\eqref{g}--\eqref{p^2 g}, it is clear that our wave equation for $q$ in ~\eqref{scalar eqn for q} is of this form. Metcalfe and Tataru introduce the following function spaces in order to deduce localized energy estimates and control error terms later on.
 
 Let $S_k$ denote the $k$th Littlewood Paley projection. Set $ A_j= \R \times\{\abs{x} \simeq 2^j\}$ and $A_{<j} =\R \times\{\abs{x} \lesssim 2^{j}\}$. 
  For a function $v$ of frequency $2^k$ define the norm
 
 \begin{align}\label{2^k norm}
 \|v\|_{X_k} := 2^{\frac{k}{2}}\|v\|_{L^2_{t,x}(A_{<-k})} + \sup_{j\ge -k} \|\abs{x}^{-\frac{1}{2}} v\|_{L^2_{t,x}(A_j)}
 \end{align} 
 With this we can define the global norm

\begin{align} \label{local energy norm}
\|v\|^2_{X^s} := \sum_{k\in \Z} 2^{2sk}\|S_k v\|_{X_k}^2
\end{align}
for $-\frac{d+1}{2} < s < \frac{d+1}{2}$. The space $X^s$ is defined to be the completion of all Schwartz functions with respect to the $X^s$ norm defined above. For the dual space $Y^s~=~(X^{-s})^{\prime}$ we have the norm

\begin{align}\label{Y^s}
\|f\|_{Y^s}^2 = \sum_{k\in\Z} 2^{2sk}\|S_k f\|_{X_k^{\prime}}^2
\end{align}
for $-\frac{d+1}{2} < s < \frac{d+1}{2}$. We refer the reader to ~\cite{Met-Tat} for details regarding the structure of these spaces. 

With this setup, Metcalfe and Tataru are able to prove the following results:
\begin{enumerate}
 \item Establish $\dot{H}^{s}$ localized energy estimates for the operator $P$, see ~\cite[Definition 2, Theorem 4 and Corollary 5]{Met-Tat}. 
 \item Construct a global-in-time parametrix, $K$, for the operator $P$, and prove Strichartz estimates for this parametrix.  Error terms are controlled in the localized energy spaces, see ~\cite[Propositions 15-17 and Lemmas 19-21]{Met-Tat}. 
 
\item Combine the localized energy estimates with the Strichartz and error estimates for the parametrix to prove global Strichartz estimates for solutions to ~\eqref{var wave}, see ~\cite[Theorem 6]{Met-Tat}. 
 \end{enumerate}  
\vspace{\baselineskip}

To prove these results, Metcalfe and Tataru are able to make a number of simplifications that allow them to treat the lower order terms in $P$ as small perturbations and work instead with only the principal part of $P$, denoted by $P_a= -\p_t^2 + \p_{\al}a^{\al \be} \p_{\be}$. 

Let $\chi_j$ be smooth spatial Littlewood-Paley multipliers, i.e.

\begin{align*}
1= \sum_{j\in \Z} \chi_{j}(x),   \hspace{.4in} \textrm{supp}(\chi_j) \subset \{ 2^{j-1} \le \abs{x} \le 2^{j+1}\}  
\end{align*}
And set 
\begin{align*}
\chi_{< j}(x) :=  \sum_{k<j} \chi_k(x),  \hspace{.4in} S_j := \sum_{k<j} S_k
\end{align*}
We then define frequency localized coefficients 

\begin{align}\label{a loc}
a^{\al \be}_{(k)}:= g_0^{\al \be} + \sum_{\ell<k-4} (S_{<\ell} \chi_{< k-2\ell}) S_{\ell} a^{\al \be}
\end{align}
corresponding frequency localized operators 

\begin{align}\label{P_{(k)}}
P_{(k)} := -\p_t^2 + \p_{\al}( a^{\al \be}_{(k)} \p_{\be})
\end{align}
used on functions of frequency $k$, and the global operators

\begin{align}\label{tiP}
\ti{P} := \sum_{k\in\Z} P_{(k)} S_k
\end{align}

 In order to prove ~\eqref{Besov Strichartz}, we only need to make a small alteration to the proof of the Strichartz estimates for the parametrix, $K$. At first, the parametrix construction occurs on the level of the frequency localized operator, $P_0$, see \cite[Propositions 15-17]{Met-Tat}.  In particular, they prove the following result.
 
 \begin{prop}[\cite{Met-Tat}, Proposition 17]\label{prop 17}
 Assume that $\ti{\ep}$ is sufficiently small, and assume that $f$ is localized at frequency $0$. Then there is a parametrix $K_0$ for $P_{(0)}$ which has the following properties:
 
\begin{list}{(\roman{parts})}{\usecounter{parts}}

\item (regularity) For any Strichartz pairs $(p_1, q_1)$ respectively $(p_2, q_2)$ with $q_1\le q_2$, we have 
\begin{align}\label{K_0}
\|\p K_{0} f\|_{L_t^{p_1}L_x^{q_1} \cap X_0} \lesssim \|f\|_{L_t^{p_2^{\prime}}L_x^{q_2^{\prime}}}
\end{align}

\item (error estimate) For any Strichartz pair $(p,q)$ we have 
\begin{align}\label{K_0 error}
\|(P_{(0)}K_{0}-1)f\|_{X_0^{\prime}} \lesssim  \|f\|_{L_t^{p^{\prime}}L_x^{q^{\prime}}}
\end{align}

\end{list}
\end{prop}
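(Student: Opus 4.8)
Since this statement is quoted verbatim as \cite[Proposition 17]{Met-Tat}, the cleanest route is simply to invoke the cited result; what follows is a sketch of the argument one would reproduce if one wished to be self-contained. The plan is to construct $K_0$ as a genuine global-in-time parametrix for the frequency-$0$ localized operator $P_{(0)} = -\p_t^2 + \p_\al(a_{(0)}^{\al\be}\p_\be)$ by Tataru's phase space method. First I would conjugate $P_{(0)}$ by the Bargmann/FBI transform $T$: because the frequency-localized coefficients $a_{(0)}^{\al\be}$ are smooth and, by \eqref{a loc} together with the asymptotic flatness \eqref{a flat}, uniformly close to $g_0$ on the frequency scale $\simeq 1$, the conjugated operator $TP_{(0)}T^*$ reduces, modulo acceptable errors, to a first order transport operator along the null bicharacteristic flow of the principal symbol. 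Solving this transport equation along the Hamilton flow and transforming back produces a kernel for $K_0$ which enjoys the same fixed-time dispersive decay as the constant coefficient half-wave propagator, $\norm{K_0(t)}_{L^1_x \to L^\infty_x} \lesssim \abs{t}^{-(d-1)/2}$, uniformly in $t$; this is the content of \cite[Propositions 15--17]{Met-Tat}.

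Granting these dispersive bounds, the regularity estimate (i) follows from the $TT^*$ / Keel--Tao machinery of \cite{Kee-Tao}, exactly as in the flat case, the only difference being that the endpoint and the frequency localization are already accounted for; the extra gain into the local energy space $X_0$ (whose norm \eqref{2^k norm} involves weighted $L^2_{t,x}$ norms on the dyadic shells $A_j$) comes from the spreading of the wave packets in the parametrix together with an almost-orthogonality argument, and is again part of \cite[Propositions 15--17]{Met-Tat}. The error estimate (ii) is a consequence of the parametrix being built so that every term discarded along the way --- the remainder in the transport reduction, the curvature of the bicharacteristic flow, and the mismatch coming from the frequency truncation of the coefficients in \eqref{a loc} --- is either smoothing or concentrated near the light cone, hence controllable in the dual local energy norm $X_0' = Y^0$, where $Y^s = (X^{-s})'$ as above.

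The main obstacle is precisely the error estimate (ii): one must track, within the FBI calculus, every term dropped in passing from $TP_{(0)}T^*$ to the model transport operator and show it maps $L^{p'}_tL^{q'}_x$ boundedly into $X_0'$. This is where the quantitative weak asymptotic flatness hypotheses \eqref{a flat}--\eqref{c flat}, and crucially the smallness of $\ti{\ep}$, are used, since they render these errors genuinely perturbative. For our purposes nothing in this proposition needs to change: the Lorentz/Besov refinement of the left-hand Strichartz space in \eqref{Besov Strichartz} is obtained only after globalization, by a routine modification of \cite[Lemma 19]{Met-Tat} carried out in Section~\ref{Strichartz}, and Proposition~\ref{prop 17} itself is used exactly as stated.
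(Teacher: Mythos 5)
You correctly observe that this proposition is quoted verbatim from Metcalfe--Tataru and that the paper offers no independent proof; it is invoked purely as a black box and your instinct simply to cite it is exactly what the paper does. Your accompanying sketch of the FBI-transform parametrix construction and the error-tracking in the local energy spaces is a reasonable high-level summary of the Metcalfe--Tataru argument, consistent with the brief description in Section~\ref{Strichartz Estimates}, but it goes beyond anything the paper itself supplies.
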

The next step is to move from these frequency localized parametrices  to a construction of a parametrix for $P_a$, and this is where we make a slight alteration. To begin, Metcalfe and Tataru prove that the operator $P_{(0)}$ in Proposition ~\ref{prop 17} can be replaced with $\ti{P}$, see ~\cite[Lemma 10]{Met-Tat}, on functions localized at frequency $0$. To construct parametrices, $K_j$,  at any frequency $j$, for $\ti{P}$ we rescale, setting

\begin{align}\label{K_j}
K_jf (t,x) = 2^{-2j} K_0 (f_{2^{-j}}) (2^j t, 2^jx)
\end{align}
where $f_{2^{-j}}(t,x) = f(2^{-j}t, 2^{-j} x)$. Next, set 

\begin{align}\label{K}
K := \sum_{j\in \Z} K_j S_j
\end{align}
 
With these definitions it is straightforward to prove the following lemma, which is our altered version of ~\cite[Lemma 19]{Met-Tat}. Recall that the homogeneous Besov norm of a function $\varphi$ is given by

\begin{align*}
\|\varphi\|_{\dot{B}^{s}_{p,q}} = \left( \sum_{j\in Z} (2^{sj}\|S_j\varphi\|_{L^p})^q \right)^{\frac{1}{q}}
\end{align*}
Then we have 

\begin{lem}[Besov space version of Lemma 19 in ~\cite{Met-Tat}] \label{besov K}
The parametrix $K$ has the following properties:

\begin{list}{(\roman{parts})}{\usecounter{parts}}

\item (regularity) For any Strichartz pairs $(\rho_1, p_1, q_1)$, respectively $(\rho_2, p_2, q_2)$ with $q_1\le q_2$ we have 

\begin{align}\label{K Strichartz}
\|\p K f\|_{L_t^{p_1}\dot{B}^{s-\rho_1}_{q_{1},2} \cap X^s} \lesssim \|f\|_{\abs{\partial_x}^{-s-\rho_2}L_t^{p_2^{\prime}}L_x^{q_2^{\prime}}}
\end{align}

\item (error) For any Strichartz pair $(\rho, p, q)$, we have 

\begin{align}\label{K error}
\|(P_aK-1)f\|_{Y^s} \lesssim  \|f\|_{\abs{\partial_x}^{-s-\rho}L_t^{p^{\prime}}L_x^{q^{\prime}}}
\end{align}

\end{list}
\end{lem}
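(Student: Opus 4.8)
The plan is to derive Lemma~\ref{besov K} from the scalar estimates in Proposition~\ref{prop 17} by a Littlewood--Paley decomposition, rescaling, and real interpolation. The starting point is that, by construction, $K = \sum_j K_j S_j$ with $K_j$ obtained from $K_0$ by the parabolic rescaling \eqref{K_j}, and that the frequency-$0$ estimates \eqref{K_0}, \eqref{K_0 error} hold with $P_{(0)}$ replaced by $\ti P$ on functions localized at frequency $0$ (this is \cite[Lemma 10]{Met-Tat}). First I would record the single-frequency estimate: applying \eqref{K_0} to $f_{2^{-j}}$ and undoing the rescaling shows that, for $f$ localized at frequency $\simeq 2^j$,
\begin{align*}
2^{j}\|S_j \p K f\|_{L^{p_1}_t L^{q_1}_x} + \|S_j \p K f\|_{X_j} \lesssim 2^{j(\rho_1 + \rho_2)} \| \,\abs{\p_x}^{-\rho_2} f\|_{L^{p_2'}_t L^{q_2'}_x},
\end{align*}
where the powers of $2^j$ come from tracking how the Strichartz scaling exponents $\rho_1,\rho_2$ transform under $(t,x)\mapsto(2^jt,2^jx)$; this is exactly the computation already done in the proof of \cite[Lemma 19]{Met-Tat}, and the only change is that we retain the $L^{q_1}_x$ norm \emph{on each dyadic block} rather than summing it back up in an $L^2$-based way.

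Next I would assemble the global estimate. For the $X^s$ component, $\|\p Kf\|_{X^s}^2 = \sum_k 2^{2sk}\|S_k \p K f\|_{X_k}^2$, and one plugs in the single-frequency bound, uses almost-orthogonality of the Littlewood--Paley pieces of $f$ (the operator $P_a K$ being, modulo acceptable errors, frequency-localizing), and recognizes the resulting $\ell^2$-sum over $k$ of $2^{k(s-\rho_2)}\|\,\abs{\p_x}^{-\rho_2}S_kf\|_{L^{p_2'}_tL^{q_2'}_x}$-type quantities as being controlled by $\|f\|_{\abs{\p_x}^{-s-\rho_2}L^{p_2'}_tL^{q_2'}_x}$ after first applying Minkowski to pull the $\ell^2_k$ sum inside the $L^{p_2'}_t$ norm (legitimate since $p_2'\le 2$). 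For the Besov component $L^{p_1}_t\dot B^{s-\rho_1}_{q_1,2}$, the point is that $\|\p Kf\|_{\dot B^{s-\rho_1}_{q_1,2}}$ is by definition the $\ell^2_k$-norm of $2^{k(s-\rho_1)}\|S_k\p Kf\|_{L^{q_1}_x}$; feeding in the per-frequency bound and again using Minkowski in time gives the claim. The error estimate \eqref{K error} is handled identically: it does not involve the Besov refinement at all, so it is literally \cite[Lemma 19(ii)]{Met-Tat}, restated with the $Y^s$ norm on the left; I would just cite it, or reproduce the one-line argument that $(P_aK-1)f = \sum_j(\ti P K_j - 1)S_j f + (\text{low-high interaction errors})$ and apply \eqref{K_0 error} frequency by frequency.

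The main obstacle, and the only place where genuine care is needed, is the interpolation/summation step that upgrades the fixed-exponent Strichartz bound on each dyadic block to a Besov statement with the $\ell^2$ summation exponent: one must check that the scaling powers $2^{j(\rho_1+\rho_2)}$ interact correctly with the $2^{sk}$ weights so that the exponent arithmetic closes to give precisely $\dot B^{s-\rho_1}_{q_1,2}$ on the left and the homogeneous Sobolev-type norm $\abs{\p_x}^{-s-\rho_2}L^{p_2'}_tL^{q_2'}_x$ on the right, with the summation exponent $2$ (not $q_1$) surviving. This is exactly why the lemma is phrased as an "easy modification" of \cite[Lemma 19]{Met-Tat}: the structural content is unchanged, and the only new input is that Littlewood--Paley blocks satisfy $L^{q_1}_x$ bounds individually, so one never needs to invoke the square-function characterization of $L^{q_1}$ and can keep the $\ell^2_k$ summation honest throughout. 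Everything else---orthogonality, Minkowski in $t$, handling the error terms in $Y^s$---is routine and I would keep it brief.
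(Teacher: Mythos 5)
Your proposal follows essentially the same route as the paper: rescale the frequency-$0$ estimate of Proposition~\ref{prop 17} to obtain a per-block bound for $K_j$, assemble the Besov norm via Littlewood--Paley decomposition, apply Minkowski (in time) on the left and its dual on the right, and invoke the Littlewood--Paley theorem, while the error bound is simply cited from~\cite{Met-Tat}. The only blemish is a bookkeeping slip in your displayed single-frequency estimate --- the factors should read $2^{-j\rho_1}\|\p K_j f\|_{L^{p_1}_tL^{q_1}_x}+\|\p K_j f\|_{X_j}\lesssim 2^{j\rho_2}\|f\|_{L^{p_2'}_tL^{q_2'}_x}$ rather than the $2^j$ and $2^{j(\rho_1+\rho_2)}$ you wrote --- but since you defer the arithmetic to the known computation in~\cite[Lemma~19]{Met-Tat}, the structure and conclusion of your argument are correct.
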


\begin{proof}
We begin by extending the results of Lemma ~\ref{prop 17} to the parametrices $K_j$. Observe that $\p K_j f(t,x) = 2^{-j} \p K_0(f_{2^{-j}})(2^jt, 2^jx)$. Hence, for a function $f$ localized at frequency $j$, we have 

\begin{align*}
\|\p K_j f\|_{L_t^{p_1}L_x^{q_1}} &= 2^{-j}2^{j(-\frac{1}{p_1} - \frac{d}{q_1})} \|\p K_0 (f_{2^{-j}})\|_{L_t^{p_1}L_x^{q_1}}\\
\\
&\lesssim 2^{j(-1 -\frac{1}{p_1} - \frac{d}{q_1})} \|f_{2^{-j}}\|_{L_t^{p_2^{\prime}}L_x^{q_2^{\prime}}}\\
\\
&= 2^{j(-1 -\frac{1}{p_1} - \frac{d}{q_1})} 2^{j(\frac{1}{p_2^{\prime}}+ \frac{d}{q_2^{\prime}})} \|f\|_{L_t^{p_2^{\prime}}L_x^{q_2^{\prime}}}
\end{align*}
Therefore, by ~\eqref{p,q, rho} we obtain 

\begin{align}\label{freq j}
2^{j(\frac{d}{2} +1 - \rho_1)}\|\p K_j f\|_{L_t^{p_1}L_x^{q_1}} \lesssim 2^{j(\frac{d}{2} +1 + \rho_2)}\|f\|_{L_t^{p_2^{\prime}}L_x^{q_2^{\prime}}}
\end{align}
for functions $f$ localized at frequency $j$.

We also need to estimate $\|\p K_j f\|_{X_j}$. Let $f$ again be localized at frequency ~$j$. Observe that 

\begin{align*}
2^{\frac{j}{2}}\| \p K_j f\|_{L_{t,x}^2(A_{<-j})} &= 2^{\frac{j}{2}}\left( \int_{\abs{x}\le 2^{-j}} \abs{\p K_jf(t,x)}^2 \, dx\, dt\right)^{\frac{1}{2}}\\
\\
&= 2^{j(-1-\frac{d}{2})}\|\p K_0 f_{2^{-j}}\|_{L_{t,x}^2(A_{<0})}
\end{align*}
Therefore we can apply Proposition ~\ref{prop 17} to deduce 

\begin{align*}
2^{j(\frac{d}{2} +1)} 2^{\frac{j}{2}}\| \p K_j f\|_{L_{t,x}^2(A_{<-j})} &\lesssim \|f_{2^{-j}}\|_{L_t^{p_2^{\prime}}L_x^{q_2^{\prime}}}\\
\\
&=2^{j(\frac{1}{p_2^{\prime}} + \frac{d}{q_2^{\prime}})}\|f\|_{L_t^{p_2^{\prime}}L_x^{q_2^{\prime}}}\\
\\
&= 2^{j(\frac{d}{2} +1 + \rho_2)}\|f\|_{L_t^{p_2^{\prime}}L_x^{q_2^{\prime}}}
\end{align*}
Similarly one can show for any $k\ge -j$ that

\begin{align*}
2^{j(\frac{d}{2} +1)}\norm{\abs{x}^{-\frac{1}{2}} \p K_j f}_{L_{t,x}^2(A_k)} \lesssim 2^{j(\frac{d}{2} + 1+ \rho_2)}\|f\|_{L_t^{p_2^{\prime}}L_x^{q_2^{\prime}}}
\end{align*}
Hence

\begin{align*}
\|\p K_j f\|_{X_j} \lesssim 2^{j\rho_2} \|f\|_{L_t^{p_2^{\prime}}L_x^{q_2^{\prime}}}
\end{align*}

Putting this all together we obtain the frequency $j$ version of Proposition ~\ref{prop 17} (i):

\begin{align}\label{freq j prop 17}
2^{-j\rho_1} \|\p K_j f\|_{L_t^{p_1}L_x^{q_1}} + \|\p K_j f\|_{X_j} \lesssim 2^{j\rho_2} \|f\|_{L_t^{p_2^{\prime}}L_x^{q_2^{\prime}}}
\end{align}

The next step is to use the Littlewood-Paley theorem to sum up these frequency localized pieces. As a preliminary step we observe that ~\eqref{freq j prop 17} implies that for each~ $s$

\begin{align}\label{freq j est}
2^{2j(s-\rho_1)} \|\p K_j f\|_{L_t^{p_1}L_x^{q_1}}^2 + 2^{2js}\|\p K_j f\|_{X_j}^2 \lesssim 2^{2j(s+\rho_2)} \|f\|_{L_t^{p_2^{\prime}}L_x^{q_2^{\prime}}}^2
\end{align}
Then, we have 
\begin{align}
\|\p K f\|_{L_t^{p_1}\dot{B}^{s-\rho_1}_{q_{1},2}}  &= \norm{ \left( \sum_{j\in\Z} 2^{2j(s-\rho_1)}\norm{S_j\p \sum_{\ell\in\Z} K_{\ell} S_{\ell} f}^2_{L_x^{q_1}} \right)^{\frac{1}{2}}}_{L_t^{p_1}}\\ \notag
\\
&\lesssim \norm{ \left( \sum_{j\in\Z} 2^{2j(s-\rho_1)}\norm{\p  K_j S_j f}^2_{L_x^{q_1}} \right)^{\frac{1}{2}}}_{L_t^{p_1}}\\ \notag
\\
&\lesssim \left( \sum_j 2^{2j(s-\rho_1)}\norm{\p  K_j S_j f}^2_{L_t^{p_1}L_x^{q_1}} \right)^{\frac{1}{2}} \label{Mink 1}\\ \notag
\\
&\lesssim  \left( \sum_j 2^{2j(s+\rho_2)}\norm{S_j f}^2_{L_t^{p_2^{\prime}}L_x^{q_2^{\prime}}} \right)^{\frac{1}{2}} \label{freq j 1}\\ \notag
\\
&\lesssim \norm{ \left( \sum_j 2^{2j(s+\rho_2)}\abs{S_j f}^2\right)^{\frac{1}{2}}}_{L_t^{p_2^{\prime}}L_x^{q_2^{\prime}}} \label{dual Mink 1}\\ \notag
\\
&\lesssim \|f\|_{\abs{\partial_x}^{-s-\rho_2}L_t^{p_2^{\prime}}L_x^{q_2^{\prime}}} \label{L P 1}
\end{align}
Above, ~\eqref{Mink 1}, ~\eqref{freq j 1}, ~\eqref{dual Mink 1} and ~\eqref{L P 1} follow, respectively, from Minkowski's inequality, estimate ~\eqref{freq j est}, the dual estimate to Minkowski, and the Littlewood-Paley theorem.  Finally, we have 
\begin{align}
\|\p K f\|_{X^s} &= \left( \sum_{j\in\Z} 2^{2js}\norm{S_j\p \sum_{\ell\in\Z} K_{\ell} S_{\ell} f}^2_{X_j}\right)^{\frac{1}{2}}\\ \notag
\\
&\lesssim  \left( \sum_{j\in\Z} 2^{2js}\norm{\p K_j S_j f}^2_{X_j}\right)^{\frac{1}{2}}\\ \notag
\\
&\lesssim \left( \sum_{j\in\Z} 2^{2j(s+\rho_2)}\norm{S_j f}^2_{L_t^{p_2^{\prime}}L_x^{q_2^{\prime}}}\right)^{\frac{1}{2}}\\ \notag
\\
&\lesssim \|f\|_{\abs{\partial_x}^{-s-\rho_2}L_t^{p^{\prime}}L_x^{q^{\prime}}} \label{Mink 2 and L P}
\end{align}
where ~\eqref{Mink 2 and L P} follows from the dual to Minkowski's inequality and the Littlewood-Paley theorem. The proof of ~\eqref{K error} follows exactly as in ~\cite{Met-Tat}. 
\end{proof}

We can carry out the rest of the argument exactly as in ~\cite{Met-Tat} except with Lemma ~\ref{besov K} in place of  ~\cite[Lemma $19$]{Met-Tat},  to obtain the following Besov space version of  ~\cite[Theorem $6$]{Met-Tat}. 

\begin{thm}[Besov space version of Theorem $6$ in ~\cite{Met-Tat}] \label{Strichartz}
Let $d\ge 4$. Assume that the coefficients $a^{\al}{\be}$, $b^{\alpha}$, $c$ satisfy ~\eqref{a flat}, ~\eqref{b flat}, and ~\eqref{c flat} with $\ti{\ep}$ sufficiently small. Let $(\rho_1, p_1, q_1)$ and $(\rho_2, p_3, q_2)$ be two Strichartz pairs and assume further that $s=0$ or $s=-1$. Then the solution $v$ to ~\eqref{var wave} satisfies 

\begin{align}\label{Stri}
\|\partial v\|_{L_t^{p_1}\dot{B}^{s-\rho_1} _{q_1, 2}} + \|\p v\|_{X^s} \lesssim \|v[0]\|_{\dot{H}^{s+1} \times \dot{H}^s} + \|f\|_{\abs{\partial_x}^{-s-\rho_2}L_t^{p_2^{\prime}}L_x^{q_2^{\prime}} + Y^{s}}
\end{align}
\end{thm}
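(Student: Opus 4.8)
The plan is to reprove \cite[Theorem~6]{Met-Tat} \emph{mutatis mutandis}, invoking Lemma~\ref{besov K} wherever \cite[Lemma~19]{Met-Tat} is used. Every other ingredient of \cite{Met-Tat} enters unchanged, because the only modification we have made — recording the parametrix output in the Besov norm $\dot{B}^{s-\rho_1}_{q_1,2}$ instead of in $L^{q_1}_x$ — is localized entirely within Lemma~\ref{besov K}; the extra Littlewood--Paley sum defining the Besov norm was already handled there frequency-by-frequency, via \eqref{freq j prop 17} and Minkowski's inequality.

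I would organize the argument in three steps, following \cite{Met-Tat}. \textbf{Step 1 (localized energy).} Recall from \cite[Theorem~4 and Corollary~5]{Met-Tat} the $\dot{H}^s$ localized energy estimate for $P$, namely $\|\p v\|_{X^s}\lesssim \|v[0]\|_{\dot{H}^{s+1}\times\dot{H}^s}+\|f\|_{Y^s}$ for $v$ solving \eqref{var wave}; its proof is a positive commutator argument that makes no reference to the parametrix, so it is unaffected by our changes and already produces the term $\|\p v\|_{X^s}$ on the left of \eqref{Stri}. \textbf{Step 2 (reduction to the principal part).} Write $P=P_a+b^\al\p_\al+c$ with $P_a=-\p_t^2+\p_\al a^{\al\be}\p_\be$; by \eqref{b flat}, \eqref{c flat} and the mapping properties of the spaces involved one has $\|(b^\al\p_\al+c)v\|_{Y^s}\lesssim\ti{\ep}\,\|\p v\|_{X^s}$, so this contribution may be moved to the right-hand side and absorbed, and it suffices to treat $P_a v=g$, where $g$ is built from $f$ and the (already controlled) lower-order terms, together with the initial data. \textbf{Step 3 (parametrix).} With the parametrix $K$ of \eqref{K}, split $v=Kg+r$. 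Lemma~\ref{besov K}~(i) bounds $\|\p Kg\|_{L^{p_1}_t\dot{B}^{s-\rho_1}_{q_1,2}\cap X^s}$ by $\|g\|_{|\p_x|^{-s-\rho_2}L^{p_2'}_tL^{q_2'}_x}$, hence by the right side of \eqref{Stri}. The remainder $r$ solves $P_a r=-(P_aK-1)g$ with initial data $r[0]=v[0]-(Kg)[0]$ controlled as in \cite{Met-Tat}, and Lemma~\ref{besov K}~(ii) gives $\|(P_aK-1)g\|_{Y^s}\lesssim\|g\|_{|\p_x|^{-s-\rho}L^{p'}_tL^{q'}_x}$; feeding $r$ into Step~1 and combining with the Strichartz bound for $Kg$ closes the estimate exactly as in \cite[Theorem~6]{Met-Tat}. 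The restriction $s=0$ or $s=-1$ is inherited from \cite[Corollary~5 and Theorem~6]{Met-Tat}, since the localized energy estimate with lower-order terms present is available only for $s+1\in\{1,2\}$.

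The only genuinely new point is the verification in Step~3 that passing to the Besov norm does not disturb the absorption/iteration structure of \cite{Met-Tat}. Since all error terms there live in the frequency-robust spaces $X^s$ and $Y^s$, and the additional Littlewood--Paley sum in $\dot{B}^{s-\rho_1}_{q_1,2}$ was already accounted for in the proof of Lemma~\ref{besov K}, this verification is routine; this is precisely why only Lemma~19 of \cite{Met-Tat}, and not the surrounding argument, needed to be modified.
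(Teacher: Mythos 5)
Your proposal is correct and matches the paper's proof: the paper likewise states that the theorem follows by running the argument of \cite[Theorem 6]{Met-Tat} verbatim with Lemma~\ref{besov K} substituted for \cite[Lemma 19]{Met-Tat}, and your three-step outline (localized energy, reduction to the principal part, parametrix with the Besov refinement) is exactly the structure being invoked. The only difference is that you spell out the steps that the paper leaves as a citation.
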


To obtain ~\eqref{Besov Strichartz} we set $s=0$, $\rho_1=\frac{5}{6}$, $p=2$, $q=6$, $\rho_2= 0$, $p_2=1$ and $q_2=2$  in ~\eqref{Stri} giving 

\begin{align}
\| v\|_{L^{2}\dot{B}^{\frac{1}{6}} _{6, 2}}  \lesssim \|v[0]\|_{\dot{H}^{1} \times L^2} + \|f\|_{L^{1}L^{2}}
\end{align}
We combine this which the energy estimates which correspond to $s=0$, $\rho_1=0$, $p=\infty$, $q=2$, $\rho_2= 0$, $p_2=1$, $q_2=2$  and $d=4$ in ~\eqref{Stri} giving 

\begin{multline}
\| v\|_{L_t^{2}(\R; (\dot{B}^{\frac{1}{6}} _{6, 2}(\R^4))} + \|\p v\|_{L_t^{\infty}(\R; (L_x^2(\R^4))} \lesssim \\ \|v[0]\|_{\dot{H}^{1} \times L^2(\R^4)} + \|f\|_{L_t^{1}(\R; (L_x^{2}(\R^4))}
\end{multline}
which is exactly ~\eqref{Besov Strichartz}.

\vspace{\baselineskip}

\section{Appendix}\label{Appendix}

\subsection{Sobolev Spaces}\label{sobolev spaces}
We have interchangeably used two different definitions of Sobolev spaces throughout the paper. The difference in the definitions arises from the different ways that we can view maps $f: M \to N$ and their differentials $df: TM \to u^*TN$.  On one hand, we can take the extrinsic viewpoint, where we consider the isometric embedding of  $N\hookrightarrow \R^m$ and view $TN$ as a subspace of $\R^m$. Here we view $f$ as a map  $M \to \R^m$ with values in $N$ and $df: TM \to \R^m$ with values in $TN$. On the other hand, we can view things intrinsically, and exploit the parallelizable structure on $TN$.  We outline these different approaches below, and show that if we take the Coulomb frame on $u^*TN$ these approaches are equivalent for our purposes. Furthermore, we show that if $(M,g)= (\R^4, g)$ with $g$ as in ~\eqref{g}--\eqref{p^k g} then all of the following spaces are equivalent to those that would arise if we had set $M$ to be $\R^4$ with the Euclidean metric. 

\subsubsection{Extrinsic Approach}\label{extrinsic approach}  Taking the extrinsic point of view, we consider maps $f: (M,g) \to (\R^m, \ang{\cdot, \cdot})$. Hence, we can write $f=(f^1, \dots, f^m)$ with the differential $df = (df^1, \dots , df^m)$. For such $f$ and for $1<p<\infty$, we define the norm

\begin{align}\label{sob space}
\|f\|_{W_e^{k,p}}&= \sum_{\ell=0}^{k}  \left( \sum_{a=1}^m \int_M \abs{\n^{\ell} f^a}_g^p \textrm{dvol}_g \right)^{\frac{1}{p}}\\
&=\sum_{{\ell}=0}^{k}  \left( \sum_{a=1}^m \int_M \left( g^{i_1j_1} \cdots g^{i_{\ell} j_{\ell}} (\n^{\ell} f^a)_{i_1, \dots, i_{\ell}} (\n^{\ell} f^a)_{j_1\dots, j_{\ell}}\right)^{\frac{p}{2}} \sqrt{\abs{g}} \,dx \right)^{\frac{1}{p}} \notag
\end{align}
where $\n^{\ell}$ denotes the $\ell$th covariant derivative on $M$ with the convention that $\n^0 f^a =f^a$, see  ~\cite{Heb} . For example, the components in local  coordinates of $\n f^a$ are given by $(\n f^a)_i = (df)_i =\p_i f$ while the components in local coordinates for $\n^2 f^a$ are given by
\begin{align*}
(\n^2 f^a)_{ij}= \p_{ij} f^a -\Gamma_{ij}^k f^a_k
\end{align*}
We define $W^{k,p}_e(M, \R^m)$ to be the completion of  $\{f\in C^{\infty}(M; \R^m): \|f\|_{W_e^{k,p}} <\infty \}$ with respect to the above norm, (the subscript $e$ here stands for extrinsic). We then define $W^{k,p}_e(M, N)$ to be the space of functions ~$\{f \in W^{k,p}_e(M, \R^m): ~f(x) ~\in ~N, \, \textrm{a.e.}\}$.   The homogeneous Sobolev spaces $\dot{W}^{k,p}_e(M; N)$ are defined similarly. 

\begin{rem}The one drawback with this definition is that $C^{\infty}(M; N)$ may not be dense in $W^{1,p}(M; N)$ for $p<\textrm{dim}\, M$, for a generic compact manifold $N$. For example, in ~\cite{Sch-Uhl 2}, Schoen and Uhlenbeck show that $f(x) = \frac{x}{\abs{x}} \in H^1(B^3; S^2)$ cannot be approximated by $C^{\infty}$ maps from $B^3 \to S^2$ in $H^1(B^3; S^2)$, see \cite{Lin-Wang} for a proof. This poses a potential difficulty for us as we required the density of $C^{\infty}(M, TN)$ in $H^1(M; TN)$ in order to  approximate the data $(u_0, u_1) \in H^2\times H^1 (M; TN)$ by smooth functions in our existence argument. Thankfully, this difficulty can be avoided using the equivalence of the extrinsic and intrinsic definitions  of Sobolev spaces which will be argued below.
\end{rem}

With $(M,g) = (\R^4, g)$, with $g$ as in ~\eqref{g}--\eqref{p^k g}, and $\ep$ small enough, we can show that these ``covariant" Sobolev Spaces $W_e^{k,p}((\R^4, g); N)$ are equivalent to the ``flat" Sobolev spaces $W_e^{k,p}((\R^4,\ang{\cdot, \cdot}); N)$. 

\begin{lem}\label{sobolev} Let  $(M,g) = (\R^4, g)$ with $g$ as in ~\eqref{g}--\eqref{p^k g} and let $1<p<\infty $. Then $\dot{W}^{k,p}((\R^4, g))$ is equivalent to $\dot{W}^{k,p}(\R^4, g_0)$ where $g_0$ is the Euclidean metric on $\R^4$. In particular, if $f:\R^4 \to \R^m$ then for every $k\in\N$ we have 
\begin{align}\label{sob equiv}
\|\p^kf\|_{L^p(\R^4)} \simeq \|\n^k f\|_{L^p(\R^4,g)}
\end{align}
\end{lem}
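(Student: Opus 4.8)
The plan is to prove the equivalence by a bootstrap/perturbation argument, exploiting the fact that in Euclidean coordinates on $\R^4$ the covariant derivative differs from the partial derivative only by terms involving the Christoffel symbols $\Gamma_{ij}^k$, which are built from $\partial g$ and are therefore small (or at least finite) by \eqref{g}--\eqref{p^k g}. The key structural observation is the schematic expansion
\begin{align*}
\n^k f = \p^k f + \sum (\p^{a_1}\Gamma)\cdots(\p^{a_r}\Gamma)\,\p^b f,
\end{align*}
where the sum is over multi-indices with $a_1+\cdots+a_r + r + b = k$ and $b\le k-1$; each $\Gamma$ and each derivative of $\Gamma$ is a smooth function of $g$ and its derivatives, so $\p^{a}\Gamma$ is controlled by $\p^{a+1}g$ together with lower-order derivatives of $g$ (using that $g^{-1}$ is bounded, which follows from \eqref{g} for $\ep$ small). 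Conversely, one can solve for $\p^k f$ in terms of $\n^j f$, $j\le k$, and the same Christoffel factors. Thus it suffices to estimate, in $L^p(\R^4)$, each product of the form $(\p^{a_1}\Gamma)\cdots(\p^{a_r}\Gamma)\,\p^b f$ by $\ep\,\|\p^k f\|_{L^p} + C\sum_{j<k}\|\p^j f\|_{L^p}$, whereupon a downward induction on $k$ closes the equivalence (the $k=0$ case being the trivial $\|f\|_{L^p}=\|f\|_{L^p}$, and the $k=1$ case being $\n f = \p f$ exactly).

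Concretely, I would proceed as follows. First record that by \eqref{g} and $\ep$ small, $g^{-1}$ is bounded in $L^\infty$ with $\|g^{-1}-g_0^{-1}\|_{L^\infty}\lesssim\ep$, and that $\p^m(g^{-1})$ is a polynomial in $g^{-1}$ and $\p g,\dots,\p^m g$, hence $\p^m\Gamma$ is likewise a polynomial expression in $g^{-1}, \p g, \dots, \p^{m+1}g$. Combining with \eqref{p g}--\eqref{p^k g} we get $\Gamma\in L^{4,1}\hookrightarrow L^4$, $\p\Gamma\in L^{2,1}\hookrightarrow L^2$, and $\p^m\Gamma\in L^2$ for $m\ge 1$; moreover the top-order term $\Gamma\in L^4$ and $\p\Gamma\in L^2$ are the only ones with the critical scaling we must handle by absorption, while the rest enter with extra smallness or with room to spare. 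Then for a single term $(\p^{a_1}\Gamma)\cdots(\p^{a_r}\Gamma)\,\p^b f$ with $b\le k-1$: place each $\p^{a_i}\Gamma$ in the Lebesgue space dictated by its order ($L^4$ if $a_i=0$, $L^2$ if $a_i=1$, and an $L^2$ estimate upgraded by Gagliardo--Nirenberg interpolation between $L^2$-bounded derivatives if $a_i\ge 2$, which is finite by \eqref{p^k g}), place $\p^b f$ in an appropriate $L^{p_*}$ space, and use Hölder plus the Gagliardo--Nirenberg/Sobolev inequality to bound the product by a constant times $\|\p^k f\|_{L^p}^{\theta}\|f\|_{L^p}^{1-\theta}$ with $\theta<1$ whenever $b<k$ (and, in the single borderline case where the scaling forces $\theta=1$, the constant carries a factor $\ep$ from \eqref{p g}, \eqref{p^2 g}, so it can be absorbed). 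By Young's inequality $\|\p^k f\|_{L^p}^\theta\|f\|_{L^p}^{1-\theta}\le \delta\|\p^k f\|_{L^p} + C_\delta \|f\|_{L^p}$, so summing the finitely many terms gives $\|\n^k f\|_{L^p}\le \|\p^k f\|_{L^p} + \tfrac12\|\p^k f\|_{L^p} + C\sum_{j<k}\|\p^j f\|_{L^p}$ and, symmetrically, the reverse. An induction on $k$ then yields \eqref{sob equiv}, and summing over $\ell=0,\dots,k$ gives the equivalence of the full $\dot W^{k,p}$ norms (and $W^{k,p}$ norms).

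The main obstacle is bookkeeping rather than any single hard estimate: one must check that in the schematic expansion of $\n^k f$ there is genuinely no term that simultaneously (i) has $\p^k f$ as its highest-order factor and (ii) multiplies it by a factor whose norm is merely finite, not small — i.e. that the only "top-meets-top" interaction is $(\p g)\cdot(\p^k f)$-type, which is controlled with a factor $\ep$ by \eqref{p g}/\eqref{p^2 g}. This is a scaling/counting check: a term $(\p^{a_1}\Gamma)\cdots(\p^{a_r}\Gamma)\p^b f$ with $b=k-1$ must have $\sum a_i + r = 1$, forcing $r=1$, $a_1=0$, i.e. exactly $\Gamma\cdot\p^{k-1}f$, and $\Gamma\in L^4$ with $\|\Gamma\|_{L^4}\lesssim\ep$ by \eqref{p g}; for $b\le k-2$ there is always strictly positive room in the Gagliardo--Nirenberg exponent, so those terms need no smallness at all and are simply absorbed into the lower-order sum. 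Once this is verified the downward induction is routine; I would also remark (as the paper does in Section~\ref{Equivalence of Norms}) that the same argument applied to tensor-valued $f$, e.g. $f = \n^\ell du$ or $f=q$, gives the equivalences used elsewhere, since the only additional Christoffel contractions are again polynomial in $g^{-1}$ and $\p g$.
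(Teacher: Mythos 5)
Your overall approach is the same as the paper's: expand $\nabla^k f$ in Euclidean coordinates as $\partial^k f$ plus a sum of terms of the form $(\partial^{a_1}\Gamma)\cdots(\partial^{a_r}\Gamma)\,\partial^b f$, note that the Christoffel symbols are small (via \eqref{p g}) or at least bounded (via \eqref{p^2 g}, \eqref{p^k g}), and absorb the correction by smallness. You also correctly single out the sole ``top-meets-top'' term $\Gamma\cdot\partial^{k-1}f$ and control it with $\|\Gamma\|_{L^4}\lesssim\ep$, which is exactly the mechanism the paper uses for $k=2$, $p=2$.

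The gap is in how you handle the remaining terms, those with $b\le k-2$. You estimate $\partial^b f$ by Gagliardo--Nirenberg interpolation, $\|\partial^b f\|_{L^{p_*}}\lesssim\|\partial^k f\|_{L^p}^\theta\|f\|_{L^p}^{1-\theta}$ with $\theta<1$, and then split by Young's inequality, producing the lower-order sum $\sum_{j<k}\|\partial^j f\|_{L^p}$ on the right. That yields equivalence of the \emph{inhomogeneous} $W^{k,p}$ norms, not the homogeneous equivalence $\|\partial^k f\|_{L^p}\simeq\|\nabla^k f\|_{L^p}$ claimed in \eqref{sob equiv}. The downward induction you invoke cannot repair this: the induction hypothesis merely lets you replace $\|\partial^j f\|_{L^p}$ by $\|\nabla^j f\|_{L^p}$ for $j<k$, and on an unbounded domain such terms are not controlled by $\|\partial^k f\|_{L^p}$ alone. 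The paper's computation (for $k=0,1$ and $k=2,p=2$) never introduces lower-order pieces because it uses the \emph{homogeneous} Sobolev embedding $\|\partial^b f\|_{L^{q}}\lesssim\|\partial^k f\|_{L^p}$ with $\frac1q=\frac1p-\frac{k-b}{4}$, placing each Christoffel factor $\partial^{a_i}\Gamma$ in the exactly matching Lebesgue space ($L^{4}$ for $a_i=0$, $L^2$ for $a_i=1$), so that every term is bounded by $\ep\,\|\partial^k f\|_{L^p}$ directly, with no lower-order remainder. Replacing your Gagliardo--Nirenberg/Young step by this scale-invariant Sobolev estimate is the correct fix, and with it your bookkeeping does close the argument for the cases the paper actually proves. (As a side remark, for $k\ge 4$ the argument requires $\partial^{a_i+1} g$ in $L^{4/(a_i+1)}$ for $a_i\ge 2$, whereas \eqref{p^k g} only provides $L^2$; the paper sidesteps this by observing that only $k\le 2$ is needed. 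Your general-$k$ induction would run into the same obstacle even after fixing the homogeneity issue.)
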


\begin{proof} 
As the above norms are defined component-wise for $f=(f^1,\dots, f^m)$, it enough to prove the statement for functions $f:(M,g) \to \R$ instead of for maps $f:(M,g) \to \R^m$ with values in $N$. We also will only prove this lemma in detail for a few easy cases, namely for $k=0, 1$ and for $k=2, p=2$. These, in fact, include all the cases that we need. The other cases follow by similar arguments. 

By ~\eqref{g} it is clear that $\ds{\sqrt{\abs{g(x)}}}$ is a bounded function on $\R^4$.  Hence, for every $k$ we have 

\begin{align*}
\int_{\R^4} \abs{\n^k f}_g^p \sqrt{\abs{g}} \,dx \simeq \int_{\R^4} \abs{\n^k f}_g^p \,dx
\end{align*}
This proves the lemma for $k=0$. In local coordinates we have, for $k=1$, that $(\n f)_i :=(df)_i=  \p_i f$  and $\abs{\p f}_g^2 = g^{ij} \p_i f\p_j f$. Letting $g_0$ denote the Euclidean metric we have, for $p$ even, that 

\begin{align*}
\abs{\n f}_g^p  -   \abs{\p f}^p  &= (g^{ij} \p_i f\p_j f)^{\frac{p}{2}} -  (g_0^{ij}\p_i f\p_j f)^{\frac{p}{2}}\\
&= (g^{ij}-g_0^{ij})(\p_i f\p_j f) \sum_{\ell=1}^{\frac{p}{2}} (g^{ab}\p_a f\p_b f)^{\frac{p}{2}-\ell}(g_0^{cd}\p_c f\p_d f)^{\ell-1}
\end{align*}
Hence by ~\eqref{g} we have 

\begin{align*}
\abs{\|\n f\|_{L^p(\R^4,g)}^p - \|\p f\|_{L^p(\R^4)}^p } &\lesssim \ep \int_{\R^4} \abs{\p f}^p \,dx
\end{align*}
For $p$ odd we interpolate. This proves the case $k=1$.  For $k=2, p=2$ we have in local coordinates that 

\begin{align*}
(\n^2 f)_{ij} = \p_{ij}f - \Gamma_{ij}^{\ell} \p_{\ell} f
\end{align*}
where here $\Gamma^{l}_{ij}$ are the Christoffel symbols for $(\R^4,g)$. We also have
\begin{align*}
\abs{\n^2 f}^2 = g^{ik}g^{j\ell}(\n^2 f)_{ij}(\n^2 f)_{k\ell}
\end{align*}
Hence, using ~\eqref{g}-\eqref{p g}  and the Sobolev embedding  we have 

\begin{align*}
&\abs{\|\n^2 f\|_{L^2(\R^4,g)}^2 - \|\p^2 f\|_{L^2(\R^4)}^2 }\\
 &\simeq\abs{ \int_{\R^4} g^{ik}g^{j\ell}(\p_{ij}f -\Gamma^{a}_{ij} \p_a f) (\p_{k\ell}f -\Gamma^b_{k\ell} \p_b f) \, - g_0^{ik}g_0^{j\ell} (\p_{ij}f) (\p_{k\ell} f )\,dx}\\
\\
&\lesssim \abs{\int_{\R^4}( g^{ik}g^{j\ell}-g_0^{ik}g_0^{j\ell})(\p_{ij}f) (\p_{k\ell} f) \,dx}+ 2\abs{ \int_{\R^4} g^{ik}g^{j\ell}\, \p_{ij}f\,  \Gamma_{k\ell}^a\, \p_a f\,dx} \\
&\quad+ \abs{\int_{\R^4} g^{ik}g^{j\ell}\,  \Gamma_{ij}^a\, \p_a f\,\Gamma_{k\ell}^b\, \p_b f\,dx}\\
 \\
 &\lesssim \ep^2 \| \p^2 f\|_{L^2(\R^4)}^2 + \|\p^2 f\|_{L^2(\R^4)} \| \Gamma\|_{L^4(\R^4)} \|\p f\|_{L^4(\R^4)} + \|\p f\|_{L^4(\R^4)}^2 \|\Gamma\|^2_{L^4(\R^4)}
\end{align*}
Now, recall that $\Gamma_{ij}^a= \frac{1}{2}g^{ab}(\p_i g_{bj} + \p_j g_{ib} - \p_b g_{ij})$. Hence by ~\eqref{p g}, we have $\|\Gamma\|_{L^4(\R^4)}\lesssim \ep$. Using the Sobolev embedding 
$\dot{H}^{1}(\R^4) \hookrightarrow L^4(\R^4)$ and the above inequalities we have 

\begin{align*}
\abs{\|\n^2 f\|_{L^2(\R^4,g)}^2 - \|\p^2 f\|_{L^2(\R^4)}^2 } \lesssim \ep \|\p^2 f\|_{L^2(\R^4)}^2
\end{align*}
proving ~\eqref{sob equiv} in the case $k=2$, $p=2$. 
\end{proof}

\subsubsection{Intrinsic Approach}\label{intrinsic approach}
Next, we use the parallelizable structure on $TN$ to define ``intrinsic" Sobolev spaces for maps $\psi: TM \to u^*TN$.    

Let $\ti{e} = (\ti{e}_1, \dots, \ti{e}_n)$ be a global orthonormal frame on $TN$ and let $\bar{e} = (\bar{e}_1, \dots, \bar{e}^n)$ be the induced orthonormal frame on $u^*TN$ obtained via pullback. Now, let $\psi: TM \to u^*TN$ be a smooth map, i.e., $\psi$ is a $u^*TN$ valued $1$-form on $M$. Then $\psi$ can  be written in terms of the orthonormal frame $\bar{e}$ on $u^*TN$. The components of $\psi$ in the frame $\bar{e}$ are then given by $\psi^a =\ang{ \psi, \bar{e}_a}_{u^*h}$ and each of these can be viewed as a $1$-form on $M$, i.e., a section of $T^*M$, and can be written in local coordinates as $\psi^a = \psi^a_{\al} dx^{\al}$. 

One way to define the Sobolev norms of  $\psi$ is to ignore the covariant structure on $u^*TN$ and say that $\psi \in \dot{W}_i^{k,p}(M;N)$, (the index $i$ here stands for intrinsic), if all of the components, $\psi^a$, are in $\dot{W}^{k,p}(M; \R)$. And we define

\begin{align}\label{component Sobolev} 
&\|\psi\|_{\dot{W}^{k,p}_i(M;N)}^p := \sum_{a=1}^n \|\psi^a\|^p_{\dot{W}^{k,p}(M)} = \sum_{a=1}^n \int_M \abs{\n^{k}\psi^a}_g^p \textrm{dvol}_g \\
&= \sum_{a=1}^n \int_M \left(g^{i_1 j_1}\cdots g^{i_{k+1},j_{k+1}}(\n^k \psi)^a_{i_1,\dots, i_{k+1}} (\n^k \psi)^a_{j_1,\dots, j_{k+1}}\right)^{\frac{p}{2}} \, \sqrt{\abs{g}} \, dx \notag
\end{align}
where $\n^k$ denotes the $k$th covariant derivative on $M$. By the same argument as above, we can show that in our case, with  $(M,g)=(\R^4, g)$ and $g$ as in ~\eqref{g}--\eqref{p^k g}, these spaces are equivalent to the case where we have the Euclidean metric on $\R^4$, that is, there exist constants $c, C$ such that 
\begin{align}\label{intrinsic equiv}
\|\p^k \psi^a\|_{L^p(\R^4)} \simeq \|\n^k \psi^a\|_{L^p(M)}  
\end{align}

The one glaring issue here, is that this construction will depend, in general, on the choice of frame $\bar{e}$. We can avoid this confusion though in the case where the frame $e$ is the Coulomb frame as in this case the intrinsic norms are equivalent to their extrinsic counterparts in the cases we will be interested in. This issue was addressed in Section \ref{Equivalence of Norms}.

\subsection{Density of $C^{\infty} \times C^{\infty}(M; TN)$  in $H^{2} \times H^{1}(M; TN)$}\label{density}

We set $(M,g) = (\R^4, g)$ with $g$ as in ~\eqref{g}-\eqref{p^k g}. In the existence argument for wave maps we claimed the existence of a sequence of smooth data $(u_0^k, u_1^k)  \to (u_0, u_1)$  in $H^{2} \times H^{1}(M; TN)$. Here we show that such a sequence does, in fact,  exist. That is, we show that  $C^{\infty} \times C^{\infty}(M; TN)$ is dense in  in $H^{2} \times H^{1}(M; TN)$. 

First, observe that $C^{\infty}(M;N)$ is dense in $H_e^2(M; N)$, see   ~\cite[Lemma A.$12$]{Brez-Nir}.  Hence we can find a sequence of smooth maps $u^k_0$ such that $u^k_0 \to u_0$ in $H^{2}(M;N)$.    

Finding a sequence of smooth maps $u_1^k :M \to TN$ such that $u_1^k(x) \in T_{u_0^k(x)}N$ approximating $u_1$ in $H^{1}(M; TN)$ is not as straightforward as we do not know a priori that $C^{\infty}(M; TN) $ is dense in $H_e^{1}(M; TN)$. However, we can use the equivalence of the norms $H_e^{1}(M; TN)$ and $H_i^{1}(M; TN)$ proved in the previous section to get around this issue. 

Let $e$ denote the Coulomb frame on $u_0^*TN$. Since $u_1$ is a section of $u_0^*TN$,  we can find 
one-forms $q_1^a$ over $M$ so that $u_1 = q_1^a e_a$. By the equivalence of the norms $H_e^{1}(M; TN)$ and $H_i^{1}(M; TN)$, we see that $u_1 \in H_e^{1}(M; TN)$ if and only if  $q_1^a \in H^1(TM; \R)$. Since $C^{\infty}$ is dense in $H^1(TM; \R)\simeq H^1(\R^4;\R)$ we can find smooth $(q_1^a)^k$ such that $(q_1^a)^k \to q^a_1$ in $H^1(TM; \R)$.  Now, for each smooth map $u_0^k:M \to N$ we can find the associated Coulomb frame $e^k =(e_1^k, \dots, e_n^k)$.  We then define smooth sections $u_1^k :=  (q_1^a)^k e_a^k$ and by the equivalence of norms explained in Section \ref{Equivalence of Norms} we have $u_1^k \to u_1$ in $H^1_e(M; TN)$ as desired.

\subsection{Lorentz Spaces}\label{Lorentz Spaces}
 To prove the pointwise estimates for the connection form $A$ associated to the Coulomb gauge we need a few general facts about Lorentz spaces. We review these facts below.
$L^{p,r}(\R^d)$ functions are measured with the norm

\begin{align*}
\|f\|_{L^{p,r}} =  \left( \int_0^{\infty} t^{\frac{r}{p}} f^*(t)^r\,  \frac{dt}{t} \right)^{\frac{1}{r}}
\end{align*}
for $0<r< \infty$.  If  $r=\infty$, then

\begin{align*}
\|f\|_{L^{p, \infty}} = \sup_{t>0} t^{\frac{1}{p}} f^*(t)
\end{align*}
 where above we have

\begin{align*}
f^*(t) &= \inf \{\al: d_f(\al) \le t\} \\
\\
d_f(\al) &= \textrm{meas}\{ x: \abs{f(x)} > \al\}
\end{align*}
 A consequence of real interpolation theory is that Lorentz spaces can also be characterized as the interpolation spaces given by
 
 \begin{align}\label{interp}
 L^{p,r}(\R^d) = (L^{p_0}, L^{p_1})_{\theta ,r}
 \end{align}
 where $1\le p_0 < p_1\le \infty$, $p_0< r \le \infty$ and $\frac{1}{p} = \frac{1-\theta}{p_0} + \frac{\theta}{p_1}$. We refer the reader to \cite[Chapter 5.2]{Ber-Lof} for more details. 
 
 Note that the  $L^{p,\infty}$ norm is the same as the weak-$L^p$ norm. Below we record some general properties of Lorentz spaces that were needed in the proof of Proposition ~\ref{main elliptic estimates}. We refer the reader to ~\cite{Gra}, ~\cite{Ber-Lof}, ~\cite{ON}, and ~\cite{Tar}  for more details.
  
\begin{lem}\label{Lor 1} Suppose that $0<p\le \infty$ and $0<r<s \le\infty$. Then  
\begin{list}{(\roman{parts})}{\usecounter{parts}}
\item $L^{p,p} =L^p$
\\
\item If $r<s$ then $L^{p,r} \subset L^{p,s}$
\\
\item If $h: \R^d \to \R$  is defined by $\ds{h(x) = \frac{1}{\abs{x}^{\al}}}$, then $h \in L^{\frac{d}{\al}, \infty}$.
\end{list}
\end{lem}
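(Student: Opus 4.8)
The final statement to prove is Lemma~\ref{Lor 1}, the three elementary properties of Lorentz spaces. The plan is to verify each part directly from the definition of the Lorentz quasinorm in terms of the decreasing rearrangement $f^*$, invoking no machinery beyond basic measure theory and a single monotonicity observation.

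\textbf{Part (i): $L^{p,p} = L^p$.} Here I would simply unwind the definition: for $0 < p < \infty$,
\[
\|f\|_{L^{p,p}}^p = \int_0^\infty t^{p/p} f^*(t)^p \, \frac{dt}{t} = \int_0^\infty f^*(t)^p \, dt,
\]
and the layer-cake / equimeasurability identity $\int_0^\infty f^*(t)^p \, dt = \int_{\R^d} |f(x)|^p \, dx$ (which follows because $f^*$ and $|f|$ have the same distribution function $d_f = d_{f^*}$) gives $\|f\|_{L^{p,p}} = \|f\|_{L^p}$. The case $p = \infty$ is the statement $\|f\|_{L^{\infty,\infty}} = \sup_{t>0} f^*(t) = f^*(0^+) = \|f\|_{L^\infty}$.

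\textbf{Part (ii): $L^{p,r} \subset L^{p,s}$ for $r < s$.} The key point is that $t \mapsto t^{1/p} f^*(t)$ controls $f^*$ pointwise: since $f^*$ is nonincreasing, for any fixed $t > 0$ one has $t\, f^*(t)^r \le \int_0^t f^*(\tau)^r \, d\tau \le \int_0^\infty \tau^{r/p} f^*(\tau)^r \frac{d\tau}{\tau} \cdot (\text{const})$, which after rearranging yields $t^{1/p} f^*(t) \lesssim \|f\|_{L^{p,r}}$, i.e. $\|f\|_{L^{p,\infty}} \lesssim \|f\|_{L^{p,r}}$. Then for $r < s < \infty$ I would interpolate the exponent:
\[
\|f\|_{L^{p,s}}^s = \int_0^\infty \bigl(t^{1/p} f^*(t)\bigr)^s \frac{dt}{t} \le \|f\|_{L^{p,\infty}}^{s-r} \int_0^\infty \bigl(t^{1/p} f^*(t)\bigr)^r \frac{dt}{t} = \|f\|_{L^{p,\infty}}^{s-r} \|f\|_{L^{p,r}}^r \lesssim \|f\|_{L^{p,r}}^s,
\]
and the case $s = \infty$ is just the pointwise bound established above.

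\textbf{Part (iii): $h(x) = |x|^{-\alpha}$ lies in $L^{d/\alpha, \infty}$.} This is a direct computation of the distribution function: $d_h(\lambda) = \mathrm{meas}\{x : |x|^{-\alpha} > \lambda\} = \mathrm{meas}\{|x| < \lambda^{-1/\alpha}\} = c_d\, \lambda^{-d/\alpha}$, so $h^*(t) = \inf\{\lambda : c_d \lambda^{-d/\alpha} \le t\} = (c_d/t)^{\alpha/d}$, whence $\sup_{t>0} t^{\alpha/d} h^*(t) = c_d^{\alpha/d} < \infty$, which is exactly $\|h\|_{L^{d/\alpha,\infty}}$.

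None of the three parts presents a genuine obstacle; this is a standard review lemma. The only mildly delicate point is the normalization of the Lorentz quasinorm (whether one includes the factor $(p/r)^{1/r}$ or not), but since the paper's convention in Section~\ref{Lorentz Spaces} is fixed and all estimates are stated up to constants via $\lesssim$, this does not affect anything. If I wanted to be maximally economical I would instead simply cite the real-interpolation characterization \eqref{interp} already recorded in the excerpt together with standard references such as \cite{Gra} or \cite{Ber-Lof}, from which (i), (ii), (iii) are immediate, and present only the short self-contained rearrangement arguments above for completeness.
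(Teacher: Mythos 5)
Your proof is correct in all three parts, and in fact supplies more than the paper does: the paper does not prove Lemma~\ref{Lor 1} at all, but simply remarks that it ``follows easily from the definitions'' and defers to \cite[Chapter 1.4.2]{Gra}. Your direct rearrangement arguments (the layer-cake identity for (i), the monotonicity bound $t^{1/p} f^*(t) \lesssim \|f\|_{L^{p,r}}$ followed by splitting the exponent $s = (s-r) + r$ for (ii), and the explicit computation of $d_h$ and $h^*$ for (iii)) are precisely the standard proofs in that reference, so you have filled in the cited detail rather than taken a different route.
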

 The proof of Lemma ~\ref{Lor 1} follows easily from the definitions and can be found for example in  ~\cite[Chapter $1.4.2$]{Gra}.  We also needed the Lorentz space versions of H$\ddot{\textrm{o}}$lder's inequality and Young's inequality and the following duality statement.

\begin{lem}\label{Lor 2}  Suppose that $f\in L^{p_1, r_1}$ and $g\in L^{p_2, r_2}$ where $1\le p_1, p_2 <\infty$ and $1\le r_1, r_2 \le \infty$. Then, 
\begin{list}{(\roman{parts})}{\usecounter{parts}}
\item\label{Holder} $\ds{\|fg\|_{L^{p,r}} \lesssim \|f\|_{L^{p_1, r_1}} \|g\|_{L^{p_2, r_2}}}$ if $\frac{1}{p} = \frac{1}{p_1}+ \frac{1}{p_2}$ and $\frac{1}{r} = \frac{1}{r_1}+ \frac{1}{r_2}$
\\
\item \label{Young} $\ds{\|f\ast g\|_{L^{p,r}} \lesssim \|f\|_{L^{p_1, r_1}} \|g\|_{L^{p_2, r_2}}}$ if $0<\frac{1}{p} = \frac{1}{p_1}+ \frac{1}{p_2}-1$ and\\ $\frac{1}{r} = \frac{1}{r_1}+ \frac{1}{r_2}$
\\
\item $(L^{p,r})^{\prime} = L^{p_1, r_1}$ for $1<p<\infty$, $1<r<\infty$ and $(L^{p,1})^{\prime} = L^{p_1, \infty}$ for $1<p<\infty$, where $\frac{1}{p} +\frac{1}{p_1} =1$ and $\frac{1}{r}+ \frac{1}{r_1} =1$
\end{list}

\end{lem}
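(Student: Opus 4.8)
The plan is to derive all three statements from the real interpolation characterization of Lorentz spaces recorded in \eqref{interp}, combined with the classical inequalities on Lebesgue spaces. All of this is standard — the results are due essentially to O'Neil and Hunt — so in the write-up one may largely defer to \cite{ON}, \cite{Gra}, \cite{Ber-Lof}, \cite{Tar}; but the mechanism is as follows.

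For part (i), I would regard the pointwise product as a bilinear map $B(f,g) = fg$. By the ordinary H\"older inequality, $B$ maps $L^{a_1}\times L^{b_1} \to L^{c_1}$ and $L^{a_2}\times L^{b_2} \to L^{c_2}$ boundedly whenever $\frac{1}{c_i} = \frac{1}{a_i} + \frac{1}{b_i}$. Choosing the endpoint exponents $a_i, b_i$ so that $p_1$ interpolates between $a_1, a_2$ and $p_2$ between $b_1, b_2$ with one common parameter $\theta$, the bilinear real interpolation theorem of Lions--Peetre applied to $B$, together with \eqref{interp}, yields boundedness $L^{p_1, r_1}\times L^{p_2, r_2} \to L^{p,r}$ with $\frac1p = \frac{1}{p_1} + \frac{1}{p_2}$; crucially, the additive relation $\frac1r = \frac{1}{r_1} + \frac{1}{r_2}$ on the secondary indices is exactly the refinement supplied by the bilinear version of the interpolation theorem. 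For part (ii) the argument is identical with the convolution $C(f,g) = f\ast g$ in place of the product: classical Young's inequality gives $C\colon L^{a_i}\times L^{b_i} \to L^{c_i}$ with $\frac{1}{c_i} = \frac{1}{a_i} + \frac{1}{b_i} - 1$, and bilinear real interpolation upgrades this to the Lorentz scale with the stated relations on the $p$'s and $r$'s, provided $\frac1p = \frac{1}{p_1} + \frac{1}{p_2} - 1 > 0$. Alternatively, one invokes O'Neil's convolution inequality $(f\ast g)^{**}(t) \lesssim t\, f^{**}(t)\, g^{**}(t) + \int_t^{\infty} f^*(s) g^*(s)\, ds$ directly and estimates the two resulting terms in the Lorentz quasi-norm.

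For part (iii), I would use the duality theorem for real interpolation spaces: $\bigl((A_0, A_1)_{\theta, r}\bigr)' = (A_0', A_1')_{\theta, r'}$ for $1 \le r < \infty$, valid for a compatible couple with $A_0 \cap A_1$ dense in both $A_0$ and $A_1$. Taking $(A_0, A_1) = (L^{p_0}, L^{p_1})$, using $(L^{p_i})' = L^{p_i'}$ and \eqref{interp}, this gives $(L^{p,r})' = L^{p', r'}$ for $1 < p < \infty$ and $1 < r < \infty$. The remaining case $r = 1$, namely $(L^{p,1})' = L^{p',\infty}$, must be handled separately since $L^{p,\infty}$ is not produced by this duality; here I would quote Hunt's identification of the dual of $L^{p,1}$, as in \cite[Chapter 5]{Ber-Lof}.

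The only genuinely nontrivial point — and the one I expect to be the main obstacle if one insists on a self-contained proof — is the additive relation $\frac1r = \frac{1}{r_1} + \frac{1}{r_2}$ on the secondary exponents in (i) and (ii). This is strictly stronger than what one obtains by naively interpolating in a single variable, and it is precisely where the bilinear nature of the interpolation, equivalently O'Neil's direct computation with the decreasing rearrangement, is essential. Everything else reduces to bookkeeping with the definitions of $f^*$, $f^{**}$, and \eqref{interp}.
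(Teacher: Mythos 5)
Your proposal is correct, and for parts (ii) and (iii) it essentially coincides with what the paper does (the paper cites O'Neil for (ii) and Grafakos' Theorem 1.4.17 for (iii), the latter being exactly the interpolation--duality route you describe). For part (i), however, you reach for bilinear real interpolation while the paper takes a much shorter direct path: from the rearrangement inequality $(fg)^*(t) \le f^*(t/2)\,g^*(t/2)$ (Grafakos, Proposition 1.4.5), one substitutes into the defining integral, changes variables $t \mapsto 2t$, factors $t^{1/p} = t^{1/p_1}\,t^{1/p_2}$, and applies ordinary H\"older's inequality in the single real variable $t$ with exponents $r_1/r$ and $r_2/r$; the additive relation $\frac{1}{r}=\frac{1}{r_1}+\frac{1}{r_2}$ is then nothing but the H\"older exponent condition for that one integral. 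The sharp secondary-index relation that you flag as the main obstacle to a self-contained proof therefore comes for free on this elementary route, whereas on your route one needs the bilinear Lions--Peetre theorem in the form that carries the additive relation on the $K$-method indices --- valid, but a considerably longer detour for this particular inequality. One small correction to (iii): the endpoint $(L^{p,1})'=L^{p',\infty}$ is not actually outside the interpolation duality theorem --- the statement $\bigl((A_0,A_1)_{\theta,q}\bigr)'=(A_0',A_1')_{\theta,q'}$ is valid for $1\le q<\infty$, so $q=1$ gives $q'=\infty$ directly; it does not need to be treated as a separate case via Hunt, though of course doing so is harmless.
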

 To prove ~$(i)$ above observe that $(fg)^*(t) \le f^*(\frac{t}{2}) g^*(\frac{t}{2})$, see ~\cite[Proposition $1.4.5$]{Gra}. Then apply H$\ddot{\textrm{o}}$lder's inequality. We refer the reader to ~\cite{ON} for the proof of ~$(ii)$ above. And ~~$(iii)$ is proved in ~\cite[Theorem $1.4.17$]{Gra}.

We also require Sobolev embedding theorems for Lorentz spaces which can be obtained via real interpolation.  A detailed proof can be found in ~\cite[Chapter 32]{Tar}. 

\begin{lem}[Sobolev embedding for Lorentz spaces] \label{Sob embedding Lorentz}If  \,$\ds{0<s<\frac{d}{q}}$ and $\ds{\frac{1}{p} = \frac{1}{q} - \frac{s}{d}}$ then $\ds{\dot{W}^{s,q}(\R^d) \hookrightarrow L^{p,q}(\R^d)}$ and $\ds{\dot{B}^{s}_{q, r}(\R^d)\hookrightarrow L^{p,r}(\R^d)}$.
\end{lem}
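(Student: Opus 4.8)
The plan is to deduce both embeddings from the Lorentz-space version of Young's inequality (Lemma~\ref{Lor 2}~$(ii)$) together with the real interpolation description \eqref{interp} of Lorentz spaces, treating the $\dot W^{s,q}$ statement first and then bootstrapping to Besov spaces by interpolation. For the first embedding, recall that for $1<q<\infty$ and $0<s<\frac dq$ one has the identification $\dot W^{s,q}(\R^d)=I_s(L^q(\R^d))$, where $I_s=(-\Delta)^{-s/2}$ is the Riesz potential, realized as convolution with the kernel $k_s(x)=c_{d,s}\abs{x}^{-(d-s)}$ and $\norm{f}_{\dot W^{s,q}}\simeq\norm{I_s^{-1}f}_{L^q}$. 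By Lemma~\ref{Lor 1}~$(iii)$ we have $k_s\in L^{\frac{d}{d-s},\infty}(\R^d)$. I would then apply Lemma~\ref{Lor 2}~$(ii)$ to $f\ast k_s$ with $f\in L^q=L^{q,q}$: since $s<\frac dq$, the target exponent
\begin{align*}
\frac1p:=\frac1q-\frac sd=\frac1q+\frac{d-s}{d}-1
\end{align*}
is strictly positive, and the secondary indices combine as $\frac1q+\frac1\infty=\frac1q$. Hence $\norm{I_sf}_{L^{p,q}}\lesssim\norm{k_s}_{L^{\frac{d}{d-s},\infty}}\norm{f}_{L^q}\lesssim\norm{f}_{L^q}$, which is precisely $\dot W^{s,q}(\R^d)\hookrightarrow L^{p,q}(\R^d)$ with $\frac1p=\frac1q-\frac sd$.

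For the Besov embedding I would argue by real interpolation. Fix $0<s_0<s<s_1<\frac dq$ and $\theta\in(0,1)$ with $s=(1-\theta)s_0+\theta s_1$, and set $\frac{1}{p_i}=\frac1q-\frac{s_i}{d}$. Using the standard identity $\dot B^s_{q,r}=(\dot W^{s_0,q},\dot W^{s_1,q})_{\theta,r}$ together with the functoriality of the real interpolation functor and the first part, $\dot B^s_{q,r}$ embeds continuously into $(L^{p_0,q},L^{p_1,q})_{\theta,r}$. Since $p_0\ne p_1$, this last space equals $L^{p,r}$ with $\frac1p=\frac{1-\theta}{p_0}+\frac\theta{p_1}=\frac1q-\frac sd$, the secondary Lorentz exponent $q$ of the input spaces playing no role in the outcome; this identification follows from the Lions--Peetre reiteration theorem applied to the description \eqref{interp} of each $L^{p_i,q}$ as a real interpolation space of Lebesgue spaces. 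This yields $\dot B^s_{q,r}(\R^d)\hookrightarrow L^{p,r}(\R^d)$. (Alternatively one can argue by Littlewood--Paley decomposition: Bernstein's inequality gives $\norm{S_j\varphi}_{L^p}\lesssim 2^{js}\norm{S_j\varphi}_{L^q}$, and a summation lemma expressing the $L^{p,r}$ norm in terms of the dyadic pieces converts the $\ell^r$-sum of these bounds into control of $\norm{\varphi}_{L^{p,r}}$.)

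The main obstacle is the interpolation identity $(L^{p_0,q},L^{p_1,q})_{\theta,r}=L^{p,r}$ for $p_0\ne p_1$, i.e.\ the fact that the secondary Lorentz index of the two endpoint spaces is irrelevant and only the interpolation parameter $r$ and the resulting $p$ survive. Verifying the hypotheses of the reiteration theorem here — and, in parallel, checking that Young's inequality in Lorentz spaces applies at exactly the indices dictated by $s<\frac dq$ — is where the genuine work lies; everything else is bookkeeping with the definitions. A fully self-contained treatment of this circle of ideas is given in \cite[Chapter 32]{Tar}, and the classical (non-Lorentz) Sobolev embedding that underlies the first step is in \cite{Heb}.
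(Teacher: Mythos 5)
Your proof is correct, and it takes a genuinely different (and more complete) route than the paper does. The paper does not actually prove the lemma; it only sketches the special case $q=2$ (that is, $\dot H^s\hookrightarrow L^{p,2}$) using Plancherel's theorem to reduce matters to the Fourier side, followed by real interpolation of $\mathcal F^{-1}$ between $L^1\to L^\infty$ and $L^2\to L^2$. That Plancherel-based argument is intrinsically tied to $q=2$ and does not extend to general $q$; for the full statement the paper simply cites Tartar's book. Your argument, by contrast, handles arbitrary $1<q<\infty$ directly: you realize $\dot W^{s,q}$ as the image of $L^q$ under the Riesz potential $I_s$, note $k_s\in L^{\frac{d}{d-s},\infty}$ via Lemma~\ref{Lor 1}~$(iii)$, and then Young's inequality for Lorentz spaces (Lemma~\ref{Lor 2}~$(ii)$, with secondary indices $(\infty,q)\mapsto q$) immediately yields $\dot W^{s,q}\hookrightarrow L^{p,q}$ at the stated exponent. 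This is cleaner than the paper's sketch and uses precisely the two Lorentz-space lemmas the paper has already recorded, so it is well matched to the surrounding text. The Besov half of the statement you obtain by real interpolation in $s$, using $\dot B^s_{q,r}=(\dot W^{s_0,q},\dot W^{s_1,q})_{\theta,r}$ and the reiteration-type identity $(L^{p_0,q},L^{p_1,q})_{\theta,r}=L^{p,r}$ for $p_0\neq p_1$; both are standard and you correctly identify the latter as the one nontrivial input. The only caveat is the parenthetical Littlewood--Paley alternative: converting an $\ell^r$ bound on $\norm{S_j\varphi}_{L^p}$ into control of $\norm{\varphi}_{L^{p,r}}$ requires an embedding of $\dot B^0_{p,r}$ into $L^{p,r}$, which holds only under restrictions on $(p,r)$ and would need justification; but since that remark is offered only as an alternative and the interpolation argument is the one you rely on, this does not affect the correctness of the proof.
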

 
To give an idea of why Lemma \ref{Sob embedding Lorentz} is true, we demonstrate a special case, namely that \begin{align}
\dot{H}^{s}(\R^d) \hookrightarrow L^{p,2}(\R^d)\label{special}
\end{align}
for $\frac{1}{p} = \frac{1}{2} - \frac{s}{d}$.  Observe that this is a strengthening of the standard Sobolev inequality which says that $\dot{H}^s(\R^d) \hookrightarrow L^p(\R^d)$ for $\frac{1}{p} = \frac{1}{2} - \frac{s}{d}$ since ~$L^{p,2}(\R^d) \hookrightarrow ~L^p(\R^d)$. 
The proof of \eqref{special} relies on Plancherel's theorem and real interpolation. Let $\FF$ denote the Fourier transform. Let $f \in \dot{H}^s(\R^d)$, which means that $\abs{\xi}^s \FF f \in L^2(\R^d)$. Also note that if 
$0<s< \frac{d}{2}$ then 
\begin{align*}
 \abs{\xi}^{-s} \in L^{\frac{d}{s}, \infty}(\R^d)
\end{align*}
Hence, by H\"older's inequality for Lorentz spaces
\begin{align*}
\|\FF f\|_{L^{\gamma, 2}} = \| \abs{\xi}^s \FF f \abs{\xi}^{-s}\|_{L^{\ga,2}} \lesssim \| \abs{\xi}^s \FF f\|_{L^{2,2}} \|\abs{\xi}^{-s}\|_{L^{\frac{d}{s}, \infty}} < \infty
\end{align*}
for $\frac{1}{\ga} = \frac{1}{2} + \frac{s}{d}$.  Now recall that $\FF^{-1} : L^1 \to L^{\infty}$ and $\FF^{-1} : L^2 \to L^2$. Therefore, by real interpolation
\begin{align*}
\FF^{-1} : (L^1, L^2)_{\theta, 2} \to (L^{\infty}, L^2)_{\theta, 2}
\end{align*}
which, by \eqref{interp} is exactly the statement that 
\begin{align*}
\FF^{-1} : L^{\al, 2}(\R^d) \to L^{\be,2}(\R^d)
\end{align*}
where $\frac{1}{\al} = 1 - \frac{\theta}{2}$ and $\frac{1}{\be} = \frac{\theta}{2}$ and we notice that $\frac{1}{\al}+ \frac{1}{\be} =1$. Hence, with $\frac{1}{\ga} = \frac{1}{2} + \frac{s}{d}$ we have that $\FF f \in L^{\ga, 2}(\R^d)$ which implies that $f \in L^{\ga^{\prime}, 2}(\R^d)$ where $\frac{1}{\ga^{\prime}} =\frac{1}{2} - \frac{s}{d}$ which is exactly \eqref{special}. 

 The $L^p$ and Besov space versions of this statement are slightly more complicated to prove as they require additional facts from real interpolation theory and we refer the reader to \cite{Tar} for a detailed proof. 

We also need the following version of the Calderon-Zygmund theorem for Lorentz spaces. 
\begin{thm}[Calderon-Zygmund theorem for Lorentz spaces]\label{CZ} Let $T$ be a Calderon-Zygmund operator. Then $T: L^{p,r} \to L^{p,r}$ for $1<p<\infty$ and $1\le r\le \infty$,
\begin{align*}
\|T f\|_{L^{p,r}} \lesssim \|f\|_{L^{p,r}}
\end{align*}
where the constant above does not depend on $r$. 
\end{thm}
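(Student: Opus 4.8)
The plan is to deduce Theorem~\ref{CZ} from the classical Calder\'on--Zygmund theory together with real interpolation, essentially reproducing the proof of $L^p$-boundedness via Marcinkiewicz interpolation but keeping track of the second Lorentz exponent. First I would recall the two standard inputs (see, e.g., \cite{Gra}): any Calder\'on--Zygmund operator $T$ is of weak type $(1,1)$, i.e. $T\colon L^1(\R^d)\to L^{1,\infty}(\R^d)$ is bounded, and $T$ is bounded on $L^q(\R^d)$ for every $1<q<\infty$. Fixing $1<p<\infty$ and $1\le r\le\infty$, the idea is to realize $L^{p,r}(\R^d)$ as a real interpolation space between a pair of endpoints on which boundedness of $T$ is already known, and then invoke functoriality of the interpolation method.

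Concretely, I would choose an exponent $p_1$ with $p<p_1<\infty$ and the corresponding $\theta\in(0,1)$ determined by $\tfrac{1}{p}=(1-\theta)+\tfrac{\theta}{p_1}$ (possible precisely because $p_1>p>1$), and invoke the identification of Lorentz spaces as real interpolation spaces, \eqref{interp}, in the slightly more general form in which the second exponents of the endpoint spaces are immaterial: with equivalent norms, $(L^{1}(\R^d),L^{p_1}(\R^d))_{\theta,r}=L^{p,r}(\R^d)$ and $(L^{1,\infty}(\R^d),L^{p_1}(\R^d))_{\theta,r}=L^{p,r}(\R^d)$; see \cite[Chapter~5.2]{Ber-Lof} and \cite{Tar}. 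Since $T\colon L^1\to L^{1,\infty}$ and $T\colon L^{p_1}\to L^{p_1}$ are bounded, functoriality of the $K$-method of real interpolation immediately produces a bounded map $T\colon(L^1,L^{p_1})_{\theta,r}\to(L^{1,\infty},L^{p_1})_{\theta,r}$, i.e. $T\colon L^{p,r}(\R^d)\to L^{p,r}(\R^d)$, with operator norm $\lesssim \|T\|_{L^1\to L^{1,\infty}}^{1-\theta}\,\|T\|_{L^{p_1}\to L^{p_1}}^{\theta}$.

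The point requiring the most care — and the one I would want to pin down carefully — is the claim that the resulting constant is independent of the second exponent $r$. This is a consequence of the fact that the $K$-method real interpolation functor $(\cdot,\cdot)_{\theta,r}$ is exact of exponent $\theta$ with a bound uniform in $r$: from the pointwise-in-$t$ estimate $K\big(t,Tf;L^{1,\infty},L^{p_1}\big)\le \max\big(\|T\|_{L^1\to L^{1,\infty}},\|T\|_{L^{p_1}\to L^{p_1}}\big)\,K\big(t,f;L^1,L^{p_1}\big)$, applying the norm $\big(\int_0^\infty(t^{-\theta}\,\cdot\,)^r\,\tfrac{dt}{t}\big)^{1/r}$ (with the usual modification for $r=\infty$) loses no $r$-dependent factor, so one obtains exactly the asserted estimate. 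Passing through the weak-type $(1,1)$ endpoint, rather than two strong $L^q$ endpoints, is what lets the argument cover the entire range $1\le r\le\infty$, including the small values of $r$ for which $L^{p,r}$ with $1<p<\infty$ would otherwise lie outside the range directly furnished by \eqref{interp}, as well as $r=\infty$. Finally, I would remark that $T$ is a priori defined on, say, Schwartz functions or $L^2$, and the estimate extends it to all of $L^{p,r}(\R^d)$ by density, so the statement is meaningful on the whole space. I expect the only genuinely delicate step to be this uniformity in $r$; the rest is bookkeeping with standard interpolation identities.
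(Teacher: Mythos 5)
Your argument is correct, and it takes a genuinely different route from the one in the paper. The paper invokes Calder\'on's interpolation theorem (cited as \cite[Theorem 5.3.4]{Ber-Lof}, and stated as Theorem~\ref{CZ I}): it sandwiches $p$ between two exponents $1<p_1<p<p_2<\infty$, uses the strong $L^{p_i}$ bounds for $T$ to produce the weak-type endpoint estimates $T\colon L^{p_1,p_1}\to L^{p_1,\infty}$ and $T\colon L^{p_2,p_1}\to L^{p_2,\infty}$, and then reads the conclusion off from Calder\'on's theorem. You instead interpolate between the more primitive ingredients --- the weak $(1,1)$ bound and a single strong $L^{p_1}$ bound with $p_1>p$ --- and run the $K$-method by hand, combining functoriality of $(\cdot,\cdot)_{\theta,r}$ with the identification of $L^{p,r}$ as a real interpolation space (valid even with a weak endpoint, since the second Lorentz index at the endpoints is immaterial). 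Both are legitimate; the two differences worth noting are: (a) your route needs only one strong $L^q$ bound rather than two bracketing $p$, at the cost of invoking the Lorentz-space reiteration theorem rather than a ready-made black box; (b) you explicitly address why the implied constant is uniform in $r$, which the paper's proof leaves implicit in the citation. On that last point, your $K$-functional argument does show that the interpolation inequality itself is $r$-uniform, but for a complete account one should also note that the norm equivalences $(L^1,L^{p_1})_{\theta,r}\simeq L^{p,r}$ and $(L^{1,\infty},L^{p_1})_{\theta,r}\simeq L^{p,r}$ hold with constants independent of $r$; this follows from the weighted Hardy inequality $\bigl\|t^{-\theta}\!\int_0^t g\bigr\|_{L^r(dt/t)}\le\theta^{-1}\bigl\|t^{1-\theta}g\bigr\|_{L^r(dt/t)}$, whose constant $\theta^{-1}$ depends on $\theta$ (hence on $p$ and $p_1$) but not on $r$. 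With that remark supplied, the uniformity-in-$r$ claim is fully justified.
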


This extension of the Calderon-Zygmund theorem is an easy consequence of the $L^p$ version given the following interpolation theorem of Calderon, see ~\cite[Theorem 5.3.4]{Ber-Lof}. 
\begin{thm}[Calderon's interpolation theorem]\label{CZ I}Let T be a linear operator and suppose that 

\begin{align*}
T: L^{p_1, \rho} \to L^{q_1, \infty}\\
T: L^{p_2, \rho} \to L^{q_2, \infty}
\end{align*}
where $\rho>0$. Then,
\begin{align*}
T: L^{p, r} \to L^{q, s}
\end{align*}
as long as $0<r\le s\le \infty$, $p_1 \neq p_2$, $q_1\neq q_2$, $\ds{\frac{1}{p} = \frac{(1-\theta)}{p_1} + \frac{\theta}{p_2}}$, and  $\ds{\frac{1}{q} = \frac{(1-\theta)}{q_1} + \frac{\theta}{q_2}}$ for $\theta \in (0,1)$.
\end{thm}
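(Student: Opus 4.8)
The plan is to deduce this from the exactness of the real interpolation functor $(\cdot,\cdot)_{\theta,r}$, together with the reiteration theorem and the identification of Lorentz spaces as real interpolation spaces of a suitable Banach couple $(L^{a},L^{b})$. First I would invoke the exact interpolation property: since $T$ is linear and bounded $L^{p_1,\rho}\to L^{q_1,\infty}$ and $L^{p_2,\rho}\to L^{q_2,\infty}$, for every $\theta\in(0,1)$ and every $0<r\le\infty$ it is automatically bounded
\[
T:\ (L^{p_1,\rho},L^{p_2,\rho})_{\theta,r}\ \longrightarrow\ (L^{q_1,\infty},L^{q_2,\infty})_{\theta,r}.
\]
Thus the theorem reduces to the two identifications $(L^{p_1,\rho},L^{p_2,\rho})_{\theta,r}=L^{p,r}$ and $(L^{q_1,\infty},L^{q_2,\infty})_{\theta,r}=L^{q,r}$, followed by the elementary embedding $L^{q,r}\hookrightarrow L^{q,s}$ for $r\le s$ (Lemma \ref{Lor 1} (ii)).

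For the identifications I would use reiteration. Fixing $0<a<\min_i\{p_i,q_i\}$ and $b>\max_i\{p_i,q_i\}$, one has $L^{p_i,\rho}=(L^{a},L^{b})_{\vartheta_i,\rho}$ with $\vartheta_i$ determined by $1/p_i=(1-\vartheta_i)/a+\vartheta_i/b$, and likewise $L^{q_i,\infty}=(L^{a},L^{b})_{\sigma_i,\infty}$. The hypotheses $p_1\neq p_2$ and $q_1\neq q_2$ translate exactly into $\vartheta_1\neq\vartheta_2$ and $\sigma_1\neq\sigma_2$, which is the nondegeneracy needed for the reiteration theorem. Reiteration then gives
\[
(L^{p_1,\rho},L^{p_2,\rho})_{\theta,r}=\big((L^{a},L^{b})_{\vartheta_1,\rho},(L^{a},L^{b})_{\vartheta_2,\rho}\big)_{\theta,r}=(L^{a},L^{b})_{\vartheta,r}=L^{p,r},
\]
where $\vartheta=(1-\theta)\vartheta_1+\theta\vartheta_2$; since $1/p=(1-\vartheta)/a+\vartheta/b=(1-\theta)/p_1+\theta/p_2$, this is precisely the claimed relation among the exponents. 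The key structural point is that the secondary exponents $\rho$ (respectively $\infty$) of the inner spaces are washed out by the outer functor, so they play no role. The identical computation with $\vartheta_i$ replaced by $\sigma_i$ yields $(L^{q_1,\infty},L^{q_2,\infty})_{\theta,r}=L^{q,r}$ with $1/q=(1-\theta)/q_1+\theta/q_2$. Combining the two displays with the embedding $L^{q,r}\hookrightarrow L^{q,s}$ proves the theorem.

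The main obstacle — and essentially the only place care is needed — is justifying the representation $L^{p,u}=(L^{a},L^{b})_{\vartheta,u}$ and the reiteration step uniformly, including the cases where one of the primary exponents is $1$ or $\infty$: here one wants the relevant interpolation parameters $\vartheta_i,\sigma_i$ to lie strictly inside $(0,1)$, which is exactly why I replace the naive couple $(L^{1},L^{\infty})$ by a strictly larger couple $(L^{a},L^{b})$ (taking $a<1$, so that $L^{a}$ is only quasi-Banach, is harmless since real interpolation and reiteration remain valid in the quasi-Banach category, and the output parameters are unaffected). Once this is set up, the argument is routine.

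Alternatively, one can bypass the interpolation-space machinery and run the classical Marcinkiewicz argument directly: given $f\in L^{p,r}$ and a height $\lambda>0$, decompose $f=f\,\mathbf{1}_{\{|f|\le\lambda^{\gamma}\}}+f\,\mathbf{1}_{\{|f|>\lambda^{\gamma}\}}$ with the exponent $\gamma$ dictated by the scaling relation between the pairs $(p_1,q_1)$ and $(p_2,q_2)$, apply the two weak-type bounds to the two pieces to estimate the distribution function $d_{Tf}(\lambda)$, and then integrate the resulting pointwise bound on $(Tf)^{*}$ against the $L^{q,s}$ weight. This reproduces the statement with explicit constants, at the cost of a somewhat longer computation; I would present the interpolation-theoretic proof as the main line and mention this second route as a remark.
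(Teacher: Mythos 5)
The paper does not prove this theorem; it states it and refers the reader to Bergh--L\"ofstr\"om \cite[Theorem 5.3.4]{Ber-Lof}, so there is no in-paper argument to compare against. Your proposal supplies a genuine proof, and it is correct: the chain
\begin{align*}
T:\ (L^{p_1,\rho},L^{p_2,\rho})_{\theta,r}\to (L^{q_1,\infty},L^{q_2,\infty})_{\theta,r},\qquad
(L^{p_i,\rho})=(L^{a},L^{b})_{\vartheta_i,\rho},\qquad (L^{q_i,\infty})=(L^{a},L^{b})_{\sigma_i,\infty},
\end{align*}
followed by reiteration to identify both outer interpolation spaces with $L^{p,r}$ and $L^{q,r}$ respectively, and then the monotonicity $L^{q,r}\hookrightarrow L^{q,s}$ for $r\le s$, is exactly the standard interpolation-theoretic route, and it matches the one in Bergh--L\"ofstr\"om in spirit. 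You correctly pinpoint the two places where care is needed: (a) the hypotheses $p_1\neq p_2$, $q_1\neq q_2$ are precisely what make $\vartheta_1\neq\vartheta_2$ and $\sigma_1\neq\sigma_2$, which is the nondegeneracy condition reiteration requires, and (b) the secondary exponents $\rho$ and $\infty$ of the inner spaces are discarded by the outer functor, which is what makes the result indifferent to $\rho>0$; you also correctly flag that one may need the quasi-Banach version of reiteration if $a<1$ or $\rho<1$. The closing remark that the scaling exponents are forced by \eqref{interp} with $1/p=(1-\vartheta)/a+\vartheta/b$ (and likewise for $q$) is the right sanity check. The alternative route you sketch — a direct Marcinkiewicz splitting at height $\lambda^{\gamma}$ with $\gamma$ dictated by the two weak-type scalings — is also valid and is closer in flavor to the classical real-variable proof; the interpolation-space argument is shorter once the reiteration theorem is available, while the direct argument yields explicit constants and avoids any appeal to quasi-Banach interpolation theory.
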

 
 \vspace{.2in}
 
 \begin{proof}[Proof of Theorem ~\ref{CZ}]Let $T$ be a Calderon-Zygmund operator. To prove that  $T: L^{p,q}\to L^{p,q}$, find $p_1, p_2, \theta$ so that $1<p_1 < p < p_2<\infty$  and  $\ds{\frac{1}{p} = \frac{(1-\theta)}{p_1} + \frac{\theta}{p_2}}$.  Then we have 
$T: L^{p_1, p_1} \to L^{p_1, \infty}$ and $T:L^{p_2, p_1} \to L^{p_2, \infty}$ since

\begin{align*}
&\|Tf\|_{L^{p_1, \infty}} \lesssim \|Tf\|_{L^{p_1, p_1}} = \|Tf\|_{L^{p_1}} \lesssim \|f\|_{L^{p_1}} = \|f\|_{L^{p_1, p_1}}\\
\\
&\|Tf\|_{L^{p_2, \infty}} \lesssim \|Tf\|_{L^{p_2, p_2}} = \|Tf\|_{L^{p_2}} \lesssim \|f\|_{L^{p_2}} = \|f\|_{L^{p_2, p_2}}\lesssim \|f\|_{L^{p_2, p_1}}
\end{align*}
Therefore, by Theorem ~\ref{CZ I}, we have $T: L^{p,q} \to L^{p,q}$ for every $q>0$.
\end{proof} 
\vspace{.2in}


\begin{thebibliography}{10}





\bibitem{Ber-Lof} Bergh, J., L$\ddot{\mathrm{o}}$fstr$\ddot{\mathrm{o}}$m, J., {\em Interpolation spaces. An introduction}. Grundlehren der Mathematischen Wissenschaften, No. 223. Springer-Verlag, Berlin-New York ,1976.


\bibitem{Brez-Nir} Brezis H., Nirenberg L., {\em Degree theory and BMO, Part I: Compact manifolds without boundaries}, Selecta Math. 1 (1995) 197-263

\bibitem{Chr-Tah 1} Christodoulou, D., Tahvildar-Zadeh, A. {\em On the regularity of spherically symmetric wave maps}. Comm. Pure Appl. Math. 46 (1993), no. 7, 1041-1091.

\bibitem{Chr-Tah 2} Christodoulou, D., Tahvildar-Zadeh, A. {\em On the asymptotic behavior of spherically symmetric wave maps}. Duke Math. J. 71 (1993), no. 1, 31-69.

\bibitem{Eel-Lem} Eells, J. and  Lemaire, L.,  {\em A report on harmonic maps}, Bull. London Math. Soc. 10 (1978) 1-68.

\bibitem{Fri} Friedman, A.,  {\em Partial Differential Equations}, Holt, Rinehart and Winston, New York, 1969.

\bibitem{Gra} Grafakos, L.,  {\em Classical Fourier analysis}. Second edition. Graduate Texts in Mathematics, 249. Springer, New York, 2008.


\bibitem{Heb} Hebey, E. {\em Sobolev spaces on Riemannian manifolds. Lecture Notes in Mathematics}, 1635. Springer-Verlag, Berlin, 1996.


\bibitem{Hel} H$\acute{\textrm{e}}$lein, F. {\em Harmonic maps, conservation laws and moving frames}. Translated from the 1996 French original. With a foreword by James Eells. Second edition. Cambridge Tracts in Mathematics, 150. Cambridge University Press, Cambridge, 2002.

\bibitem{Kee-Tao} Keel, M., Tao, T., {\em Endpoint Strichartz estimates}. Amer. J. Math. 120 (1998), no. 5, 955-980.

\bibitem{Kla-Mac 1}  Klainerman, S. and Machedon, M. {\em Space-time estimates for null forms and the local existence theorem}, Comm. Pure Appl. Math. 46, 1221-1268 (1993)

\bibitem{Kla-Mac 2} Klainerman, S., Machedon, M. {\em Smoothing estimates for null forms and applications}. A celebration of John F. Nash, Jr. Duke Math. J. 81 (1995), no. 1, 99-133.

\bibitem{Kla-Mac 3} Klainerman, S., Machedon, M. {\em On the optimal local regularity for gauge field theories}. Differential Integral Equations 10 (1997), no. 6, 1019-1030.

\bibitem{Kla-Mac 4} Klainerman, S., Machedon, M. {\em On the regularity properties of a model problem related to wave maps}. Duke Math. J. 87 (1997), no. 3, 553-589.

\bibitem{Kla-Rod} Klainerman, S., Rodnianski, I., {\em On the global regularity of wave maps in the critical Sobolev norm}. Internat. Math. Res. Notices (2001), no. 13, 655-677.

\bibitem{Kla-Sel 1}Klainerman, S., Selberg, S. {\em Remark on the optimal regularity for equations of the wave maps type}. Comm. Partial Differential Equations 22 (1997), no. 5-6, 901-918.

\bibitem{Kla-Sel 2} Klainerman, S., Selberg, S., {\em Bilinear estimates and applications to nonlinear wave equations}. Commun. Contemp. Math. 4 (2002), no. 2, 223-295.

\bibitem{Kri 1} Krieger, J., {\em Global regularity of wave maps from $\R^{2+1}$ to $H^2$. Small energy}. Comm. Math. Phys. 250 (2004), no. 3, 507-580.

\bibitem{Kri 2} Krieger, J.,  {\em Null-form estimates and nonlinear waves}. Adv. Differential Equations 8 (2003), no. 10, 1193-1236.

 \bibitem{Kri 3} Krieger, J., {\em Global regularity of wave maps from $\R^{3+1}$ to surfaces}. Comm. Math. Phys. 238 (2003), no. 1-2, 333-366.

\bibitem{Kri-Sch} Krieger, J.,  Schlag, W., {\em Concentration compactness for critical wave maps}. Preprint (2009), arXiv: 0908.2474v1,  To appear in Monographs of the European Mathematical Society. 


\bibitem{Kri-Sch-Tat} Krieger, J., Schlag, W., Tataru, D. {\em Renormalization and blow up for charge one equivariant critical wave maps}. Invent. Math. 171 (2008), no. 3, 543-615.

\bibitem{Lin-Wang} Lin, F.,  Wang, C.  {\em The analysis of harmonic maps and their heat flows}. World Scientific Publishing Co. Pte. Ltd., Hackensack, NJ , 2008.


\bibitem{Met-Tat} Metcalfe, J. and Tataru, D., {\em Global  parametrices and dispersive estimates for variable coefficients wave equations}, Preprint, (2009). arXiv:0707.1191, To appear in  Mathematische Annalen

\bibitem{Nah-Ste-Uhl} Nahmod, A., Stefanov, A., Uhlenbeck, K. {\em On the well-posedness of the wave map problem in high dimensions}. Comm. Anal. Geom. 11 (2003), no. 1, 49-83.


\bibitem{Nic} Nicolaescu, L., Le{\em ctures on the geometry of manifolds}. Second edition. World Scientific Publishing Co. Pte. Ltd., Hackensack, NJ, 2007.

\bibitem{ON} O'Neil, R.,  {\em Convolution operators and $L(p,\,q)$ spaces} . Duke Math. J. 30 (1963) 129-142.

 
 \bibitem{Rod-Ster} Rodnianski, I., Sterbenz, J. {\em On the Formation of Singularities in the Critical $O(3)$ Sigma-Model}. Ann. of Math. (2) 172 (2010), no. 1, 187-242.
 
\bibitem{Sha 1} Shatah, J., {\em Weak solutions and development of singularities of the ${\rm SU}(2)$ $\sigma$-model}. Comm. Pure Appl. Math. 41 (1988), no. 4, 459-469.

\bibitem{Sha-Stru GWE} Shatah, J. and Struwe, M., {\em Geometric wave equations}. Courant Lecture Notes in Mathematics, 2. New York University, Courant Institute of Mathematical Sciences, New York; American Mathematical Society, Providence, RI, 1998

\bibitem{Shat-Stru WM} Shatah, J. and Struwe, M., {\em The Cauchy problem for wave maps}, Int. Math. Res. Notices 2002, 555-571 (2002). 

\bibitem{Stru 1} Struwe, M. {\em Equivariant wave maps in two space dimensions}. Comm. Pure Appl. Math. 56 (2003), no. 7, 815-823. 

\bibitem{Sha-Tah 1}Shatah, J., Tahvildar-Zadeh, A. On the Cauchy problem for equivariant wave maps. Comm. Pure Appl. Math. 47 (1994), no. 5, 719-754.

\bibitem{Sha-Tah 2} Shatah, J., Tahvildar-Zadeh, A. {\em Regularity of harmonic maps from the Minkowski space into rotationally symmetric manifolds}. Comm. Pure Appl. Math. 45 (1992), no. 8, 947-971.

\bibitem{Sch-Uhl 1} Schoen, R and Uhlenbeck, K.,  {\em Boundary regularity and the Dirichlet problem for harmonic maps}. J. Differential Geom. 18 (1983), no. 2, 253-268.

\bibitem{Sch-Uhl 2} Schoen, R and Uhlenbeck, K.,  {\em Approximation of Sobolev maps between Riemannian manifolds}. preprint (1984). 

 \bibitem{Ste-Tat 1} Sterbenz, J., Tataru, D. {\em Regularity of Wave-Maps in dimension 2 + 1}. Comm. Math. Phys. 298 (2010), no. 1, 231-264,
  
 \bibitem{Ste-Tat 2} Sterbenz, J., Tataru, D. {\em Energy dispersed large data wave maps in 2 + 1 dimensions}. Comm. Math. Phys. 298 (2010), no. 1, 139-230.


\bibitem{Tao 1} Tao, T.,  {\em Global regularity of wave maps. I. Small critical Sobolev norm in high dimension}. Internat. Math.  Res. Notices 2001, no. 6, 299-328.

\bibitem{Tao 2} Tao, T., {\em Global regularity of wave maps II. Small energy in two dimensions}. Comm. Math. Phys. 224 (2001), no. 2, 443-544. 

\bibitem{Tao 3}Tao, T. {\em Global regularity of wave maps III. Large energy from $\R^{1+2}$ to hyperbolic spaces}. Preprint 2008.

\bibitem{Tao 4}Tao, T. {\em Global regularity of wave maps IV. Absence of stationary or self-similar solutions in the energy class}. Preprint 2008.

\bibitem{Tao 5}Tao, T. {\em Global regularity of wave maps V. Large data local wellposedness and perturbation theory in the energy class}. Preprint 2008.

\bibitem{Tao 6} Tao, T. {\em Global regularity of wave maps VI. Abstract theory of minimal-energy blowup solutions}. Preprint 2009

\bibitem{Tao 7} Tao, T. {\em Global regularity of wave maps VII. Control of delocalised or dispersed solutions}. Preprint 2009.

\bibitem{Tar} Tartar, L.,  {\em An introduction to Sobolev spaces and interpolation spaces}. Lecture Notes of the Unione Matematica Italiana, 3. Springer, Berlin; UMI, Bologna, 2007.



\bibitem{Tat 3} Tataru, D., {\em Rough solutions for the wave maps equation}. Amer. J. Math. 127 (2005), no. 2, 293-377. 

\bibitem{Tat 4} Tataru, D., {\em The wave maps equation}. Bull. Amer. Math. Soc. 41 (2004), no. 2, 185-204.

\bibitem{Tat 1}Tataru, D., {\em Local and global results for wave maps. I} . Comm. Partial Differential Equations 23 (1998), no. 9-10, 1781-1793.

\bibitem{Tat 2} Tataru, D.,  {\em On global existence and scattering for the wave maps equation}. Amer. J. Math. 123 (2001), no. 1, 37-77. 

\bibitem{Tat 5} Tataru, D., {\em Strichartz estimates for operators with nonsmooth coefficients and the nonlinear wave equation}. Amer. J. Math., 122(2): 349-376, 2000. 

\bibitem{Tat 6}  Tataru, D., {\em Strichartz estimates for second order hyperbolic operators with nonsmooth coefficients. II.} Amer. J. Math., 123(3): 385-423, 2001. 

\bibitem{Tat 7} Tataru, D., {\em On the Fefferman-Phong inequality and related problems.} Comm. Partial Differential Equations, 27(11-12): 2101-2138, 2002.

 \bibitem{Tat 8} Tataru, D. {\em Strichartz estimates for second order hyperbolic operators with nonsmooth coefficients. III}. J. Amer. Math. Soc., 15(2): 419-442 (electronic), 2002.
 
\bibitem{Tat 9}Tataru, D., {\em Phase space transforms and microlocal analysis}. In Phase space analysis of partial differential equations. Vol. II, Pubbl. Cent. Ric. Mat. Ennio Giorgi, pages 505-524. Scuola Norm. Sup., Pisa, 2004. 

\bibitem{Tat 10} Tataru, D., {\em Parametrices and dispersive estimates for Schr\"{o}dinger operators with variable coefficients.} Amer. J. Math., 130(3): 571-634, 2008.

\bibitem{Uhl} Uhlenbeck, K., {\em Connections with $L^p$ bounds on curvature}. Comm. Math. Phys. 83 (1982), no. 1, 31-42.
\end{thebibliography}
\end{document}